\newcommand{\RR}{\mathbb{R}}
\newcommand{\CC}{\mathbb{C}}
\newcommand{\QQ}{\mathbb{Q}}
\newcommand{\NN}{\mathbb{N}}
\newcommand{\ZZ}{\mathbb{Z}}
\newcommand{\EE}{\mathbb{E}}
\newcommand{\Bo}{\mathcal{B}}
\newcommand{\Oo}{\mathcal{O}}
\newcommand{\RE}{ {\rm Re \,} }
\newtheorem{Th}{Theorem}
\newtheorem{Lem}{Lemma}
\newtheorem{Cor}{Corollary}
\newtheorem{Prop}{Proposition}
\theoremstyle{definition}
\newtheorem{Df}{Definition}
\theoremstyle{remark}
\newtheorem{Rem}{Remark}
\newtheorem{Ex}{Example}
\begin{document}

\keywords{linear PDEs with constant coefficients,
formal power series, moment functions, moment-PDEs, Gevrey order, Borel summability, multisummability.}

\subjclass[2010]{35C10, 35C15, 35E15, 40G10.}
\title[Summability of formal solutions]{Summability of formal solutions of linear partial differential
equations with divergent initial data}

\author{S{\l}awomir Michalik}

\address{Institute of Mathematics
Polish Academy of Sciences\\
P.O. Box 21,
\'Sniadeckich 8,
00-956 Warszawa, Poland}
\address{Faculty of Mathematics and Natural Sciences,
College of Science\\
Cardinal Stefan Wyszy\'nski University\\
W\'oycickiego 1/3,
01-938 Warszawa, Poland}
\email{s.michalik@uksw.edu.pl}
\urladdr{\url{http://www.impan.pl/~slawek}}

\begin{abstract}
We study the Cauchy problem for a general homogeneous linear
partial differential equation in two complex
variables with constant coefficients and with divergent initial data.
We state necessary and sufficient conditions for the summability
of formal power series solutions in terms of properties of divergent Cauchy data.
We consider both the summability in one variable $t$ (with coefficients belonging
to some Banach space of Gevrey series with respect to the second variable $z$)
and the summability in two variables $(t,z)$.
The results are presented in the general framework
of moment-PDEs.
\end{abstract}

\maketitle

\section{Introduction}
The problem of summability of formal solutions of linear PDEs
was mainly studied under the assumption that the Cauchy data
are convergent, see Balser \cite{B5},
Balser and Loday-Richaud \cite{B-L}, Balser and Miyake \cite{B-Mi}, Ichinobe \cite{I},
Lutz, Miyake and Sch\"afke \cite{L-M-S}, Malek \cite{Mal2}, Michalik \cite{Mic,Mic2,Mic5} and Miyake \cite{Miy}.

The case of more general initial data was investigated only for the complex heat equation (see Balser 
\cite{B1,B6}). In \cite{B1} Balser considered the case of entire initial data with an appropriate growth 
condition and he gave some preliminary results for divergent initial data, too.
Next, these results were extended in \cite{B6}, where a characterisation of summable formal power series 
solutions of the complex heat equation in terms of properties of divergent Cauchy data  was given.

The aim of our paper is a generalisation of Balser's results \cite{B1,B6} to homogeneous linear partial 
differential equations with constant coefficients.

Namely, we consider the initial value problem for a general linear partial differential equation with constant coefficients
in two complex variables $(t,z)$
\begin{gather}
\label{eq:general_p}
P(\partial_{t},\partial_{z})\widehat{u}=0,\quad \partial^j_{t} \widehat{u}(0,z)=\widehat{\varphi}_j(z)\quad (j=0,\dots,n-1),
\end{gather}
where $P(\lambda,\zeta)$ is a polynomial in both variables of degree $n$ with respect to $\lambda$ and the
Cauchy data $\widehat{\varphi}_j(z)=\sum_{n=0}^{\infty}\varphi_{jn}z^n\in\CC[[z]]$ are formal power series.
\par
We study the Gevrey asymptotic properties of formal power series solutions $\widehat{u}$ for
a fixed Gevrey order of the initial data.
Moreover, we characterise the multisummable formal solutions $\widehat{u}$ of (\ref{eq:general_p}) in terms of the Cauchy data.
\par
The results are expressed in the general framework of moment differential equations with the differentiations
$\partial_t$ and $\partial_z$ replaced by more general operators of moment differentiations
$\partial_{m_1,t}$ and $\partial_{m_2,z}$ respectively (see Definition \ref{df:moment_diff}).
The general moment differential equations were introduced by Balser and Yoshino \cite{B-Y}, who studied the
Gevrey order of formal solutions of such equations.  A characterisation of
the multisummable formal solutions of moment differential equations in terms of analytic continuation properties 
and growth estimates of the Cauchy data was established in our previous paper \cite{Mic7} under the assumption of 
convergence of the Cauchy data. In the present paper we continue the study without this assumption.
Additionally we consider a wider class of moment functions,
which is a group with respect to multiplication, and so the set of moment differential operators
contains some integro-differential operators (see Example \ref{ex:operator}). 
\par
We give a meaning to summability of formal solutions $\widehat{u}$ in two variables by two methods. In the first 
one we treat $\widehat{u}$ as a formal power series in $t$-variable with the coefficients belonging to some
Banach space of
Gevrey series (in $z$-variable). This situation is carried over by the general theory of summability
developed by Balser \cite{B2}. In the second method we study summability of $\widehat{u}$ in two variables
$(t,z)$ using approaches used by Balser \cite{B6} and by Sanz \cite{S}.
\par
The main idea of the paper is based on the use of appropriate moment Borel transforms $\Bo_{m_1',t}$ and
$\Bo_{m_2',z}$ (see Definition \ref{df:moment_Borel}), which transform the formal solution $\widehat{u}$ of
the equation 
$P(\partial_{m_1,t},\partial_{m_2,z})\widehat{u}=0$
with the divergent Cauchy data $\widehat{\varphi}_j$
into the analytic solution $v=\Bo_{m_1',t}\Bo_{m_2',z}\widehat{u}$ of the equation
$P(\partial_{m_1m_1',t},\partial_{m_2m_2',z})v=0$
with the convergent Cauchy data $\Bo_{m_2',z}\widehat{\varphi}_j$.
On the other hand we are able to define the summability of $\widehat{u}$ (both in $t$ and in $(t,z)$ variables)
in terms of analytic continuation properties of $v$.
In this way, analogously to \cite{Mic7}, we reduce
the problem of summability of $\widehat{u}$ to the problem of analytic continuation of $v$.
\par
In the case of summability of $\widehat{u}$ with respect to $t$-variable, it is sufficient to apply our previous
result \cite[Theorem 3]{Mic7}, which establishes relation between the analytic continuation properties of $v$
(with respect to $t$) and the Cauchy data $\Bo_{m_2',z}\widehat{\varphi}_j$.
In the case of summability of $\widehat{u}$ in two variables $(t,z)$ the situation is more complicated, since we
have to study the analytic continuation properties of $v$ with respect to both variables. To this end we 
characterise the analytic continuation properties
of $v$ in two variables $(t,z)$ in terms of the Cauchy data.
\par
Finally, in both cases we obtain a characterisation of the multisummable formal solution $\widehat{u}$ of moment 
differential equations in the terms of the divergent initial data $\widehat{\varphi}_j$.

\section{Notation}
We use the following notation.
The complex disc in $\CC^n$ with centre at the origin
and radius $r>0$ is denoted by $D^n_r:=\{z\in\CC^n:\ |z|< r\}$.
To simplify notation, we write $D_r$ instead of $D^1_r$. If the radius $r$ is not essential,
then we denote it briefly by $D^n$ (resp. $D$).
\par
\emph{A sector in a direction $d\in\RR$ with an opening $\varepsilon>0$}
in the universal covering space $\widetilde{\CC}$ of $\CC\setminus\{0\}$ is defined by
\[
S_d(\varepsilon):=\{z\in\widetilde{\CC}:\ z=re^{i\theta},\
d-\varepsilon/2<\theta<d+\varepsilon/2,\ r>0\}.
\]
Moreover, if the value of opening angle $\varepsilon$ is not essential, then we denote it briefly by $S_d$.
\par
Analogously, by \emph{a disc-sector in a direction $d\in\RR$ with an opening $\varepsilon>0$ and radius $r>0$}
we mean a domain $\widehat{S}_d(\varepsilon;r):=S_d(\varepsilon)\cup D_r$. If the values of $\varepsilon$ and $r$
are not essential, we write it $\widehat{S}_d$ for brevity (i.e. $\widehat{S}_d=S_d\cup D$).
\par
By $\mathcal{O}(G)$ we understand the space of holomorphic functions on a domain $G\subseteq\CC^n$.
Analogously,
the space of analytic functions of the variables $z_1^{1/\kappa_{1}},\dots,z_n^{1/\kappa_n}$
($(\kappa_1,\dots,\kappa_n)\in\NN^n$) on $G$ is denoted by $\mathcal{O}_{1/\kappa_{1},\dots,1/\kappa_n}(G)$.
More generally, if $\EE$ denotes a Banach space with a norm $\|\cdot\|_{\EE}$, then
by $\Oo(G,\EE)$ (resp. $\Oo_{1/\kappa_{1},\dots,1/\kappa_n}(G,\EE)$) we shall denote the set of all $\EE$-valued  
holomorphic functions (resp. holomorphic functions of the variables $z_1^{1/\kappa_{1}},\dots,z_n^{1/\kappa_n}$) 
on a domain $G\subseteq\CC^n$.
For more information about functions with values in Banach spaces we refer the reader to \cite[Appendix B]{B2}. 
In the paper, as a Banach space $\EE$ we will
take the space of complex numbers $\CC$ (we abbreviate $\Oo(G,\CC)$ to $\Oo(G)$ and
$\Oo_{1/\kappa_{1},\dots,1/\kappa_n}(G,\CC)$ to $\Oo_{1/\kappa_{1},\dots,1/\kappa_n}(G)$)
or the space of Gevrey series $G_{s,1/\kappa}(r)$ (see Definition \ref{df:G_s}).
\par
\begin{Df}
\label{df:growth}
A function $u\in\Oo_{1/\kappa}(\widehat{S}_d(\varepsilon;r),\EE)$
is of \emph{exponential growth of order at most $K\in\RR$ as $t\to\infty$
in $\widehat{S}_d(\varepsilon;r)$} if for any
$\widetilde{\varepsilon}\in(0,\varepsilon)$ and $\widetilde{r}\in(0,r)$ there exist 
$A,B<\infty$ such that
\begin{gather*}
\|u(t)\|_{\EE}<Ae^{B|t|^K} \quad \textrm{for every} \quad t\in \widehat{S}_d(\widetilde{\varepsilon};\widetilde{r}).
\end{gather*}
The space of such functions is denoted by $\Oo_{1/\kappa}^K(\widehat{S}_d(\varepsilon;r),\EE)$.
\par
Analogously, a function $u\in\Oo_{1/\kappa_1,1/\kappa_2}(\widehat{S}_{d_1}(\varepsilon_1;r_1)\times\widehat{S}_{d_2}(\varepsilon_2;r_2))$
is of \emph{exponential growth of order at most $(K_1,K_2)\in\RR^2$ as $(t,z)\to\infty$
in $\widehat{S}_{d_1}(\varepsilon_1;r_1)\times\widehat{S}_{d_2}(\varepsilon_2;r_2)$} if
for any $\widetilde{\varepsilon}_i\in(0,\varepsilon_i)$ and any $\widetilde{r}_i\in(0,r_i)$ ($i=1,2$)
there exist $A,B_1,B_2<\infty$ such that
\begin{gather*}
|u(t,z)|<Ae^{B_1|t|^{K_1}}e^{B_2|z|^{K_2}} \quad \textrm{for every} \quad (t,z)\in 
\widehat{S}_{d_1}(\widetilde{\varepsilon}_1;\widetilde{r}_1)\times
\widehat{S}_{d_2}(\widetilde{\varepsilon}_2;\widetilde{r}_2).
\end{gather*}
The space of such functions is denoted by
$\Oo^{K_1,K_2}_{1/\kappa_1,1/\kappa_2}(\widehat{S}_{d_1}(\varepsilon_1;r_1)\times\widehat{S}_{d_2}(\varepsilon_2;r_2) )$.
\end{Df}
\par
The space of formal power series
$ \widehat{u}(t)=\sum_{j=0}^{\infty}u_j t^{j/\kappa}$ with $u_j\in\mathbb{E}$ is denoted by $\mathbb{E}[[t^{\frac{1}{\kappa}}]]$.
Analogously, the space of formal power series $\widehat{u}(t,z)=\sum_{j,n=0}^{\infty}u_{jn} t^{j/\kappa_1}
z^{n/\kappa_2}$ with
$u_{jn}\in\mathbb{E}$ is denoted by $\mathbb{E}[t^{\frac{1}{\kappa_1}},z^{\frac{1}{\kappa_2}}]]$.
We use the "hat" notation ($\widehat{u}$, $\widehat{v}$, $\widehat{\varphi}$, $\widehat{\psi}$, $\widehat{f}$) to denote the formal power series.
If the formal power series $\widehat{u}$ (resp. $\widehat{v}$, $\widehat{\varphi}$, $\widehat{\psi}$, $\widehat{f}$) is convergent,
we denote its sum by $u$ (resp. $v$, $\varphi$, $\psi$, $f$).

\section{Moment functions}
   In this section we recall the notion of moment methods introduced by Balser \cite{B2}.
   
   \begin{Df}[see {\cite[Section 5.5]{B2}}]
    \label{df:moment}
    A pair of functions $e_m$ and $E_m$ is said to be \emph{kernel functions of order $k$} ($k>1/2$) if
    they have the following properties:
   \begin{enumerate}
    \item[1.] $e_m\in\Oo(S_0(\pi/k))$, $e_m(z)/z$ is integrable at the origin, $e_m(x)\in\RR_+$ for $x\in\RR_+$ and
     $e_m$ is exponentially flat of order $k$ in $S_0(\pi/k)$ (i.e. $\forall_{\varepsilon > 0} \exists_{A,B > 0}$
     such that $|e_m(z)|\leq A e^{-(|z|/B)^k}$ for $z\in S_0(\pi/k-\varepsilon)$).
    \item[2.] $E_m\in\Oo^{k}(\CC)$ and $E_m(1/z)/z$ is integrable at the origin in $S_{\pi}(2\pi-\pi/k)$.
    \item[3.] The connection between $e_m$ and $E_m$ is given by the \emph{corresponding moment function
    $m$ of order $1/k$} as follows.
     The function $m$ is defined in terms of $e_m$ by
     \begin{gather}
      \label{eq:e_m}
      m(u):=\int_0^{\infty}x^{u-1} e_m(x)dx \quad \textrm{for} \quad \RE u \geq 0
     \end{gather}
     and the kernel function $E_m$ has the power series expansion
     \begin{gather}
      \label{eq:E_m}
      E_m(z)=\sum_{n=0}^{\infty}\frac{z^n}{m(n)} \quad  \textrm{for} \quad z\in\CC.
     \end{gather}
   \end{enumerate}
   \end{Df}
   
    Observe that in case $k\leq 1/2$ the set $S_{\pi}(2\pi-\pi/k)$ is not defined,
    so the second property in Definition \ref{df:moment} can not be satisfied. It means that we
    must define the kernel functions of order $k\leq 1/2$ and the corresponding moment functions
    in another way.
    
    \begin{Df}[see {\cite[Section 5.6]{B2}}]
     \label{df:small}
     A function $e_m$ is called \emph{a kernel function of order $k>0$} if we
     can find a pair of kernel functions $e_{\widetilde{m}}$ and $E_{\widetilde{m}}$ of
     order $pk>1/2$ (for some $p\in\NN$) so that
     \begin{gather*}
      e_m(z)=e_{\widetilde{m}}(z^{1/p})/p \quad \textrm{for} \quad z\in S(0,\pi/k).
     \end{gather*}
     For a given kernel function $e_m$ of order $k>0$ we define the
     \emph{corresponding moment function $m$ of order $1/k>0$} by (\ref{eq:e_m}) and
     the \emph{kernel function $E_m$ of order $k>0$} by (\ref{eq:E_m}).
    \end{Df}
    
    \begin{Rem}
     \label{re:m_tilde}
     Observe that by Definitions \ref{df:moment} and \ref{df:small} we have
     \begin{eqnarray*}
      m(u)=\widetilde{m}(pu) & \textrm{and} &
      E_m(z)=\sum_{j=0}^{\infty}\frac{z^j}{m(j)}=\sum_{j=0}^{\infty}\frac{z^j}{\widetilde{m}(jp)}.
     \end{eqnarray*}
    \end{Rem}

We extend the notion of moment functions to real orders as follows
\begin{Df}
 \label{df:moment_general}
     We say that $m$ is a \emph{moment function of order $1/k<0$} if $1/m$ is a moment function of order $-1/k>0$.
     \par
     We say that $m$ is a \emph{moment function of order $0$} if there exist moment functions $m_1$ and $m_2$ of the same order $1/k>0$ such that $m=m_1/m_2$.
\end{Df}

By Definition \ref{df:moment_general} and by \cite[Theorems 31 and 32]{B2} we have
\begin{Prop}
 \label{pr:moments}
 Let $m_1$, $m_2$ be moment functions of orders $s_1,s_2\in\RR$ respectively. Then
 \begin{enumerate}
  \item $m_1m_2$ is a moment function of order $s_1+s_2$,
  \item $m_1/m_2$ is a moment function of order $s_1-s_2$.
 \end{enumerate}
\end{Prop}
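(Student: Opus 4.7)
The plan is to reduce both claims to Balser's Theorems 31 and 32 of \cite{B2}, which handle moment functions of strictly positive orders, by a short case analysis built around Definition \ref{df:moment_general}.

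I would first establish a \emph{normal form}: every moment function $m$ of real order $s$ can be written as $m=\mu/\nu$ with $\mu$, $\nu$ moment functions of \emph{positive} orders $a$ and $b$ satisfying $a-b=s$. If $s>0$, pick any moment function $\nu$ of some positive order $b$ (for instance $m(u)=\Gamma(1+u)$, of order $1$) and set $\mu:=m\nu$; by Theorem 31 this is a moment function of positive order $s+b$, and $m=\mu/\nu$. If $s=0$, this is Definition \ref{df:moment_general} verbatim. If $s<0$, Definition \ref{df:moment_general} gives that $1/m$ is a moment function of positive order $-s$; picking any $\mu$ of positive order $a$ and setting $\nu:=\mu\cdot(1/m)$, Theorem 31 yields that $\nu$ has positive order $a-s$, and $m=\mu/\nu$ with $a-(a-s)=s$.

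For part (1), write $m_i=\mu_i/\nu_i$ in this normal form, so $\mu_i,\nu_i$ have positive orders $a_i,b_i$ with $a_i-b_i=s_i$. Then
\[
m_1m_2=\frac{\mu_1\mu_2}{\nu_1\nu_2},
\]
and by Theorem 31 the numerator and denominator are moment functions of positive orders $A:=a_1+a_2$ and $B:=b_1+b_2$, with $A-B=s_1+s_2$. Three subcases finish: if $A>B$, Theorem 32 shows $m_1m_2$ is a moment function of positive order $A-B=s_1+s_2$; if $A=B$, it is of order $0=s_1+s_2$ directly by Definition \ref{df:moment_general}; if $A<B$, Theorem 32 gives $\nu_1\nu_2/(\mu_1\mu_2)$ as a moment function of positive order $B-A$, and inverting, Definition \ref{df:moment_general} yields that $m_1m_2$ is a moment function of negative order $-(B-A)=s_1+s_2$.

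Part (2) will follow from part (1) once one observes that $1/m_2$ is a moment function of order $-s_2$. This is immediate from Definition \ref{df:moment_general} when $s_2\neq 0$, and when $s_2=0$ it suffices to write $m_2=m_{2,1}/m_{2,2}$ with $m_{2,1},m_{2,2}$ of equal positive order and note that $1/m_2=m_{2,2}/m_{2,1}$ is of order $0$ by the same definition. Applying part (1) to $m_1$ and $1/m_2$ then gives $m_1/m_2$ of order $s_1+(-s_2)=s_1-s_2$. There is no deep obstacle in the argument; the only real care needed is the bookkeeping across the three sign cases in the product step and a check that the positive-order results of Balser suffice as the engine, as the definitions are purposely set up so that all other cases reduce to them by inversion or by taking a quotient of equal-order moment functions.
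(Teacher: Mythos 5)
Your proof is correct and follows essentially the same route as the paper, which states Proposition \ref{pr:moments} as an immediate consequence of Definition \ref{df:moment_general} together with Balser's Theorems 31 and 32 without writing out the details. Your normal form $m=\mu/\nu$ with positive orders and the three-way sign case analysis is simply the routine bookkeeping the paper leaves implicit.
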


\begin{Rem}
 By the above proposition we see that the set $\mathcal{M}$ of all moment functions endowed with the
 multiplication operation has the structure of group $\langle\mathcal{M}, \cdot \rangle$. Moreover,
 the map $\textrm{ord}\,\colon
 \langle\mathcal{M}, \cdot \rangle \longrightarrow \langle\ZZ, +\rangle$ defined by
 $\textrm{ord}\,(m):=s$ for every moment function $m$ of order $s$, is a group homomorphism.
 \end{Rem}

\begin{Ex}
\label{ex:functions}
 For any $a\geq 0$, $b\geq 1$ and $k>0$ we can construct the following examples of kernel functions $e_m$ and 
 $E_m$ of 
orders $k>0$ with the corresponding moment function $m$ of order $1/k$ satisfying Definition \ref{df:moment}
or \ref{df:small}:
 \begin{itemize}
  \item $e_m(z)=akz^{bk}e^{-z^k}$,
  \item $m(u)=a\Gamma(b+u/k)$,
  \item $E_m(z)=\frac{1}{a}\sum_{j=0}^{\infty}\frac{z^j}{\Gamma(b+j/k)}$.
\end{itemize}
In particular for $a=b=1$ we get the kernel functions and the corresponding moment function, which are used  
in the classical theory of $k$-summability.
\begin{itemize}
    \item $e_m(z)=kz^ke^{-z^k}$,
    \item $m(u)=\Gamma(1+u/k)$,
    \item $E_m(z)=\sum_{j=0}^{\infty}z^j/\Gamma(1+j/k)=:\mathbf{E}_{1/k}(z)$, where $\mathbf{E}_{1/k}$ is the
    Mittag-Leffler function of index $1/k$.
\end{itemize}
\end{Ex}

\begin{Ex}
For any $s\in\RR$ we will denote by $\Gamma_s$ the function
\[
  \Gamma_s(u):=\left\{
  \begin{array}{lll}
    \Gamma(1+su) & \textrm{for} & s \geq 0\\
    1/\Gamma(1-su) & \textrm{for} & s < 0.
  \end{array}
  \right.
\]
Observe that by Example \ref{ex:functions} and Definition \ref{df:moment_general}, $\Gamma_s$
is an example of moment function of order $s\in\RR$.
\end{Ex}

The moment functions $\Gamma_s$ will be extensively used in the paper,
since every moment function $m$ of order $s$ has the same growth as $\Gamma_s$. Precisely speaking,
we have 
\begin{Prop}[see {\cite[Section 5.5]{B2}}]
  \label{pr:order}
  If $m$ is a moment function of order $s\in\RR$
  then there exist constants $c,C>0$ such that
    \begin{gather*}
     c^n\Gamma_s(n)\leq m(n) \leq C^n\Gamma_s(n) \quad \textrm{for every} \quad n\in\NN.
    \end{gather*}
  \end{Prop}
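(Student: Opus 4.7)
The plan is to prove the statement by structural induction on the hierarchy of definitions, starting from the ``analytic'' case $s = 1/k$ with $k > 1/2$ (Definition \ref{df:moment}) and extending successively to small positive orders (Definition \ref{df:small}), to negative orders, and finally to order zero (Definition \ref{df:moment_general}).

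First I would treat the base case $s = 1/k$ with $k > 1/2$, where $m(u) = \int_0^\infty x^{u-1} e_m(x)\,dx$. For the upper bound I would split the integral at $x = 1$, control the piece on $(0,1)$ by integrability of $e_m(x)/x$ at the origin, and estimate the piece on $(1,\infty)$ using the exponential flatness $|e_m(x)| \leq A e^{-(x/B)^k}$ together with the substitution $y = (x/B)^k$, which produces an (incomplete) $\Gamma(n/k)$ and hence an upper bound of the form $C^n \Gamma(1 + n/k)$. For the matching lower bound the integrand $x^{n-1} e_m(x)$ concentrates near the saddle $x = B(n/k)^{1/k}$, and restricting the integration to a small interval around this saddle and using positivity and continuity of $e_m$ on $\RR_+$ yields $m(n) \geq c^n \Gamma(1 + n/k)$. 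This base case is essentially \cite[Theorem 31]{B2}.

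Next I would extend to arbitrary $s = 1/k > 0$ (including $k \leq 1/2$) by invoking Remark \ref{re:m_tilde}: pick $p \in \NN$ with $pk > 1/2$ and use $m(n) = \widetilde{m}(pn)$. The base case applied to $\widetilde{m}$ gives bounds $\widetilde{c}^{\,pn}\Gamma(1 + pn/(pk)) \leq \widetilde{m}(pn) \leq \widetilde{C}^{\,pn}\Gamma(1 + pn/(pk))$, which, after absorbing $\widetilde{c}^{\,p}$ and $\widetilde{C}^{\,p}$ into new constants, is exactly $c^n \Gamma_s(n) \leq m(n) \leq C^n \Gamma_s(n)$. For $s < 0$ I would apply what has just been proved to $1/m$ (a moment function of order $-s > 0$) and invert the resulting inequalities, using $\Gamma_s = 1/\Gamma_{-s}$. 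For $s = 0$, write $m = m_1/m_2$ with $m_1,m_2$ of the same positive order $1/k$; the $\Gamma(1+n/k)$ factors cancel in the quotient and leave $(c_1/C_2)^n \leq m(n) \leq (C_1/c_2)^n$, which is the desired bound since $\Gamma_0(n) = 1$.

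The main obstacle is the lower bound in the base case: unlike the upper bound, which follows from straightforward majorisation, it requires quantitative non-degeneracy of $e_m$ near the positive real axis in order to extract the $c^n \Gamma(1+n/k)$ lower bound via the saddle-point method. Once this step is in hand, the reductions for $k \leq 1/2$, $s < 0$, and $s = 0$ are purely formal manipulations with the constants $c, C$ and with the identity $\Gamma_s \cdot \Gamma_{-s} \equiv 1$.
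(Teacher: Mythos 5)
Your overall reduction scheme is sound and matches what the paper's definitions call for: the paper itself offers no proof but quotes the two-sided bound from \cite[Section 5.5]{B2}, and your passage from the case $k>1/2$ to $k\le 1/2$ via Remark \ref{re:m_tilde} (noting $\Gamma(1+pn/(pk))=\Gamma(1+n/k)$), to $s<0$ by applying the positive-order case to $1/m$ and using $\Gamma_s=1/\Gamma_{-s}$, and to $s=0$ by cancellation of the Gamma factors in $m=m_1/m_2$, is exactly the formal bookkeeping required by Definition \ref{df:moment_general}. The upper bound in the base case (splitting the moment integral at $x=1$ and using integrability of $e_m(x)/x$ at the origin plus exponential flatness) is also fine.

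The genuine gap is your lower bound in the base case. The saddle point $x_n=B(n/k)^{1/k}$ moves to infinity with $n$, and Definition \ref{df:moment} gives no lower estimate on $e_m$ there: exponential flatness is only an upper bound, and positivity together with continuity on $\RR_+$ yields $e_m\ge\delta>0$ only on fixed compact subsets of $(0,\infty)$, hence at best a geometric bound $m(n)\ge \delta\int_a^b x^{n-1}dx\sim b^n/n$, which is far short of $c^n\Gamma(1+n/k)$. Nothing in property 1 prevents $e_m$ from decaying along $\RR_+$ much faster than $e^{-(x/B)^k}$; what rules this out, and what actually produces the lower bound, is property 2: $E_m(z)=\sum_{n\ge0}z^n/m(n)$ is entire of exponential growth at most $k$, so Cauchy's inequalities give $1/m(n)\le A\inf_{r>0}e^{Br^k}r^{-n}$, and optimising at $r=(n/(Bk))^{1/k}$ together with Stirling's formula yields $m(n)\ge c^n\Gamma(1+n/k)$. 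Replace your saddle-point step by this Cauchy-estimate argument (or simply invoke \cite[Theorem 31]{B2} for the two-sided bound, as the paper does), and the remaining reductions go through unchanged.
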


\section{Moment Borel transform, Gevrey order and Borel summability}
We use the moment function to define the Gevrey order and the Borel summability. We first introduce
\begin{Df}
 \label{df:moment_Borel}
 Let $\kappa\in\NN$ and $m$ be a moment function. Then the linear operator $\Bo_{m,x^{1/\kappa}}\colon \EE[[x^{\frac{1}{\kappa}}]]\to\EE[[x^{\frac{1}{\kappa}}]]$ defined by
 \[
  \Bo_{m,x^{1/\kappa}}\big(\sum_{j=0}^{\infty}u_jx^{j/\kappa}\big):=
  \sum_{j=0}^{\infty}\frac{u_j}{m(j/\kappa)}x^{j/\kappa}
 \]
 is called an \emph{$m$-moment Borel transform with respect to $x^{1/\kappa}$}.
\end{Df}

We define the Gevrey order of formal power series as follows
   \begin{Df}
    \label{df:summab}
    Let $\kappa\in\NN$ and $s\in\RR$. Then
    $\widehat{u}\in\EE[[x^{\frac{1}{\kappa}}]]$ is called a \emph{formal power series of Gevrey order $s$} if
    there exists a disc $D\subset\CC$ with centre at the origin such that
    $\Bo_{\Gamma_s,x^{1/\kappa}}\widehat{u}\in\Oo_{1/\kappa}(D,\EE)$. The space of formal power series of Gevrey 
    order $s$ is denoted by $\EE[[x^{\frac{1}{\kappa}}]]_s$.
    \par
    Analogously, if $\kappa_1,\kappa_2\in\NN$ and $s_1,s_2\in\RR$ then
    $\widehat{u}\in\EE[[t^{\frac{1}{\kappa_1}},z^{\frac{1}{\kappa_2}}]]$ is called a \emph{formal power series of Gevrey order $(s_1,s_2)$} if there exists a disc $D^2\subset\CC^2$ with centre at the origin such that
    $\Bo_{\Gamma_{s_1},t^{1/\kappa_1}}\Bo_{\Gamma_{s_2},z^{1/\kappa_2}}\widehat{u}\in
    \Oo_{1/\kappa_1,1/\kappa_2}(D^2,\EE)$. The space of formal power series of Gevrey 
    order $(s_1,s_2)$ is denoted by $\EE[[t^{\frac{1}{\kappa_1}},z^{\frac{1}{\kappa_2}}]]_{s_1,s_2}$.
   \end{Df}

\begin{Rem}
 \label{re:moment}
 By Proposition \ref{pr:order}, we may replace $\Gamma_s$ (resp. $\Gamma_{s_1}$ and $\Gamma_{s_2}$) in Definition
 \ref{df:summab} by any moment function $m$ of order $s$ (resp. by any moment functions $m_1$ and $m_2$ of orders
 $s_1$ and $s_2$).
\end{Rem}

\begin{Rem}
 \label{re:convergent}
 If $\widehat{u}\in\EE[[x^{\frac{1}{\kappa}}]]_s$ and $s\leq 0$ then the formal series $\widehat{u}$ is convergent,
 so its sum $u$ is well defined.
 Moreover, $\widehat{u}\in\EE[[x^{\frac{1}{\kappa}}]]_0 \Leftrightarrow u\in\Oo_{1/\kappa}(D,\EE)$ and
 $\widehat{u}\in\EE[[x^{\frac{1}{\kappa}}]]_s \Leftrightarrow u\in\Oo^{-1/s}_{1/\kappa}(\CC,\EE)$ for $s<0$.
\end{Rem}

By Definitions \ref{df:moment_Borel} and \ref{df:summab} we obtain
\begin{Prop}
 \label{pr:properties}
 For every $\widehat{u}\in\EE[[x^{\frac{1}{\kappa}}]]$ the following properties of moment Borel transforms are satisfied:
 \begin{itemize}
  \item $\Bo_{m_1,x^{1/\kappa}}\Bo_{m_2,x^{1/\kappa}}\widehat{u} = \Bo_{m_1m_2,x^{1/\kappa}}\widehat{u}$ for every moment functions $m_1$ and
  $m_2$.
  \item $\Bo_{m,x^{1/\kappa}}\Bo_{1/m,x^{1/\kappa}}\widehat{u}=
  \Bo_{1/m,x^{1/\kappa}}\Bo_{m,x^{1/\kappa}}\widehat{u}=\Bo_{1,x^{1/\kappa}}\widehat{u}=\widehat{u}$ for every moment function $m$.
  \item $\widehat{u}\in\EE[[x^{\frac{1}{\kappa}}]]_{s_1} \Leftrightarrow \Bo_{m,x^{1/\kappa}}\widehat{u}\in\EE[[x^{\frac{1}{\kappa}}]]_{s_1-s}$  for every $s,s_1\in\RR$ and for every moment function $m$ of order $s$.
 \end{itemize}
\end{Prop}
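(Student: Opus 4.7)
The plan is to observe that all three properties are coefficient-wise identities on formal power series, reducing the proposition to direct manipulations with Definition \ref{df:moment_Borel} combined with the multiplicative group structure from Proposition \ref{pr:moments}.

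For the first item, I would write $\widehat{u}=\sum_{j=0}^{\infty}u_j x^{j/\kappa}$ and simply unfold:
\[
\Bo_{m_1,x^{1/\kappa}}\Bo_{m_2,x^{1/\kappa}}\widehat{u}=\sum_{j=0}^{\infty}\frac{u_j}{m_1(j/\kappa)\,m_2(j/\kappa)}x^{j/\kappa}=\Bo_{m_1m_2,x^{1/\kappa}}\widehat{u},
\]
since the value of $m_1m_2$ at $j/\kappa$ is by definition the product $m_1(j/\kappa)m_2(j/\kappa)$, and $m_1m_2$ is again a moment function by Proposition \ref{pr:moments}(1). For the second item, the constant function $1$ is a moment function of order $0$ by Definition \ref{df:moment_general} (take $m_1=m_2$ to get the quotient $1$). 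Since $\Bo_{1,x^{1/\kappa}}$ divides each coefficient by $1$, it equals the identity on $\EE[[x^{1/\kappa}]]$. The identity $m\cdot(1/m)=(1/m)\cdot m=1$ together with part~1 then yields the stated equalities.

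For the third item, I would reduce the claim to the first two via Remark \ref{re:moment}. Suppose $m$ is a moment function of order $s$. By Proposition \ref{pr:moments}, the product $m\cdot\Gamma_{s_1-s}$ is a moment function of order $s_1$. By Remark \ref{re:moment}, $\widehat{u}\in\EE[[x^{1/\kappa}]]_{s_1}$ is equivalent to
\[
\Bo_{m\cdot\Gamma_{s_1-s},\,x^{1/\kappa}}\widehat{u}\in\Oo_{1/\kappa}(D,\EE)
\]
for some disc $D$ centred at $0$. By part~1 the left-hand side equals $\Bo_{\Gamma_{s_1-s},x^{1/\kappa}}\bigl(\Bo_{m,x^{1/\kappa}}\widehat{u}\bigr)$, whose analyticity on a disc is, by Definition \ref{df:summab}, precisely the statement $\Bo_{m,x^{1/\kappa}}\widehat{u}\in\EE[[x^{1/\kappa}]]_{s_1-s}$. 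This gives both implications at once.

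There is no genuine obstacle: the proof is a formal manipulation and the only substantive inputs are the group structure on moment functions (Proposition \ref{pr:moments}) and the freedom to replace $\Gamma_s$ by an arbitrary moment function of the same order (Remark \ref{re:moment}). The only minor point to state carefully is that the constant $1$ belongs to $\mathcal{M}$, so that $\Bo_{1,x^{1/\kappa}}$ is a legitimate instance of the moment Borel transform and can be identified with the identity operator.
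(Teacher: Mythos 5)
Your proof is correct and matches the paper's (implicit) reasoning: the paper states this proposition as an immediate consequence of Definitions \ref{df:moment_Borel} and \ref{df:summab}, and your coefficient-wise verification, together with the use of Proposition \ref{pr:moments} and Remark \ref{re:moment} for the third item, is exactly the intended argument spelled out. No gaps.
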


As a Banach space $\EE$ we will take the space of complex numbers $\CC$ or the space of Gevrey series 
$G_{s,1/\kappa}(r)$ defined below.

\begin{Df}
\label{df:G_s}
Fix $\kappa\in\NN$, $r>0$ and $s\in\RR$. By $G_{s,1/\kappa}(r)$ we denote a Banach space of Gevrey series
\[
 G_{s,1/\kappa}(r):=\{\widehat{\varphi}\in\CC[[z^{\frac{1}{\kappa}}]]_s\colon \Bo_{\Gamma_s,z^{1/\kappa}}\widehat{\varphi}\in\Oo_{1/\kappa}(D_r)\cap C(\overline{D_r})\}
\]
equipped with the norm
\[
 \|\widehat{\varphi}\|_{G_{s,1/\kappa}(r)}:=\max_{|z|\leq r}|\Bo_{\Gamma_s,z^{1/\kappa}}\widehat{\varphi}(z)|.
\]
We also set $G_{s,1/\kappa}:=\varinjlim\limits_{r>0}G_{s,1/\kappa}(r)$. Analogously, we define
$\Oo_{1/\widetilde{\kappa}}(G,G_{s,1/\kappa}):=\varinjlim\limits_{r>0}\Oo_{1/\widetilde{\kappa}}(G,G_{s,1/\kappa}(r))$ and $\Oo^K_{1/\widetilde{\kappa}}(G,G_{s,1/\kappa}):=\varinjlim\limits_{r>0}\Oo^K_{1/\widetilde{\kappa}}(G,G_{s,1/\kappa}(r))$.
\par
Moreover, we denote by $G_{s_2,1/\kappa}[[t]]_{s_1}$ the space of
formal power series $\widehat{u}(t,z)=\sum_{j=0}^{\infty}\widehat{u}_j(z)t^j$ of Gevrey order $s_1$ with
coefficients $\widehat{u}_j(z)\in G_{s_2,1/\kappa}$.
\end{Df}

By Definitions \ref{df:summab}, \ref{df:G_s}, Remark \ref{re:moment} and Proposition 
\ref{pr:properties} we conclude
\begin{Prop}
\label{pr:prop2}
 For every $\kappa\in\NN$, $s, \overline{s}\in\RR$ (resp. $s_1,s_2,\overline{s}\in\RR$) and for every moment function $m$ of order $\overline{s}$ the following conditions are equivalent:
 \begin{itemize}
  \item $\widehat{u}\in\CC[[x^{\frac{1}{\kappa}}]]_s$
  (resp. $\widehat{u}\in\CC[[t,z^{\frac{1}{\kappa}}]]_{s_1,s_2}$),
  \item $\Bo_{\Gamma_s,x^{1/\kappa}}\widehat{u}\in\Oo_{1/\kappa}(D)$ (resp. $\Bo_{\Gamma_{s_1},t}
  \Bo_{\Gamma_{s_2},z^{1/\kappa}}\widehat{u}\in\Oo_{1,1/\kappa}(D^2)$),
  \item there exists $r>0$ such that $\widehat{u}\in G_{s,1/\kappa}(r)$ (resp.
  $\widehat{u}\in G_{s_2, 1/\kappa}(r)[[t]]_{s_1}$),
  \item $\widehat{u}\in G_{s,1/\kappa}$ (resp.
  $\widehat{u}\in G_{s_2,1/\kappa}[[t]]_{s_1}$),
  \item $\Bo_{m,x^{1/\kappa}}\widehat{u}\in\CC[[x^{\frac{1}{\kappa}}]]_{s-\overline{s}}$
   (resp. $\Bo_{m,z^{1/\kappa}}\widehat{u}\in G_{s_2-\overline{s},1/\kappa}[[t]]_{s_1}$). 
 \end{itemize}
\end{Prop}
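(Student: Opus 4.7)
The plan is to prove each of the stated equivalences by reducing it to a previously established fact, treating the scalar case first and then transferring the argument to the two-variable case, where the steps run in parallel.

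For the scalar case, I would argue as follows. The equivalence (1) $\Leftrightarrow$ (2) is essentially Definition \ref{df:summab}, once one observes that $\Bo_{\Gamma_s,x^{1/\kappa}}\widehat{u}\in\Oo_{1/\kappa}(D)$ for some disc around the origin is the same as asking it to be holomorphic on some $D_{r'}$. The equivalence (2) $\Leftrightarrow$ (3) rests on the standard fact that a function is in $\Oo_{1/\kappa}(D_{r'})$ if and only if there exists $r\in(0,r')$ such that it belongs to $\Oo_{1/\kappa}(D_r)\cap C(\overline{D_r})$; combined with the definition of $\|\cdot\|_{G_{s,1/\kappa}(r)}$ this yields the equivalence. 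Equivalence (3) $\Leftrightarrow$ (4) is simply the definition $G_{s,1/\kappa}:=\varinjlim_{r>0}G_{s,1/\kappa}(r)$. Finally, the equivalence (1) $\Leftrightarrow$ (5) is the third bullet of Proposition \ref{pr:properties}, applied to a moment function $m$ of order $\overline{s}$, noting via Remark \ref{re:moment} that $\Gamma_s$ in the definition of Gevrey order may be replaced by any moment function of order $s$.

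For the two-variable case, I would use the same structure, invoking the two-variable half of Definition \ref{df:summab} for (1) $\Leftrightarrow$ (2), Proposition \ref{pr:properties} and Remark \ref{re:moment} in each variable separately for (1) $\Leftrightarrow$ (5), and the definition of the inductive limit for (3) $\Leftrightarrow$ (4). The one point requiring a small computation is the passage between the joint statement $\Bo_{\Gamma_{s_1},t}\Bo_{\Gamma_{s_2},z^{1/\kappa}}\widehat{u}\in\Oo_{1,1/\kappa}(D^2)$ and the iterated statement $\widehat{u}\in G_{s_2,1/\kappa}(r)[[t]]_{s_1}$. I would expand $\widehat{u}=\sum_{j\geq 0}\widehat{u}_j(z)\,t^j$, apply $\Bo_{\Gamma_{s_2},z^{1/\kappa}}$ coefficientwise to identify the $j$-th coefficient of $\widehat{u}$ as an element of $G_{s_2,1/\kappa}(r)$ (with norm estimable by the sup of the joint Borel transform over a polydisc), and then recognize the requirement that $\Bo_{\Gamma_{s_1},t}$ applied to the resulting $G_{s_2,1/\kappa}(r)$-valued series be holomorphic in $t$ near the origin as exactly the Gevrey-$s_1$ condition in $t$ with values in the Banach space $G_{s_2,1/\kappa}(r)$.

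The main obstacle, insofar as there is one, is this joint-versus-iterated holomorphy step. It is handled by Hartogs' theorem together with the fact that $\Oo(D,\Oo_{1/\kappa}(D_r)\cap C(\overline{D_r}))$ can be naturally identified with $\Oo_{1,1/\kappa}(D\times D_r)\cap C(D\times\overline{D_r})$ via the majorant principle applied to the Taylor coefficients. Once this identification is made, all remaining implications are purely formal and follow by chaining the preceding definitions and propositions; no genuinely new estimate is required.
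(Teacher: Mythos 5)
Your proposal is correct and follows exactly the route the paper intends: Proposition \ref{pr:prop2} is stated there without proof as an immediate consequence of Definitions \ref{df:summab} and \ref{df:G_s}, Remark \ref{re:moment} and Proposition \ref{pr:properties}, which is precisely the chain of definitional equivalences you spell out (your extra care with the joint-versus-iterated holomorphy step, via Cauchy estimates on the $t$-coefficients of the Borel transform, is a reasonable filling-in of what the paper leaves implicit).
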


Now we are ready to define the summability of formal power series in one variable (see Balser \cite{B2})
\begin{Df}
\label{df:summable}
Let $\kappa\in\NN$, $K>0$ and $d\in\RR$. Then $\widehat{u}\in\EE[[x^{\frac{1}{\kappa}}]]$ is called
\emph{$K$-summable in a direction $d$} if there exists a disc-sector $\widehat{S}_d$ in a direction $d$ such that
$\Bo_{\Gamma_{1/K},x^{1/\kappa}}\widehat{u}\in\Oo^K_{1/\kappa}(\widehat{S}_d,\EE)$.
\end{Df}

\begin{Rem}
\label{re:summable}
 By Definitions \ref{df:G_s} and \ref{df:summable}, $\widehat{u}\in G_{s,1/\kappa}[[t]]$ is $K$-summable in a direction $d$
 if and only if $\Bo_{\Gamma_{1/K},t}\Bo_{\Gamma_{s},z^{1/\kappa}}\widehat{u}\in\Oo^K_{1,1/\kappa}(\widehat{S}_d\times D)$. Moreover,
 we may replace $\Gamma_s$ in the above characterisation by any moment function $m_2$ of order $s$.
\end{Rem}

We can now define the multisummability in a multidirection.
\begin{Df}
 Let $K_1>\cdots>K_n>0$. We say that a real vector $(d_1,\dots,d_n)\in\RR^n$ is an
 \emph{admissible multidirection} if
 \begin{gather*}
  |d_j-d_{j-1}| \leq \pi(1/K_j - 1/K_{j-1})/2 \quad \textrm{for} \quad j=2,\dots,n.
 \end{gather*}
 \par
 Let $\mathbf{K}=(K_1,\dots,K_n)\in\RR^n_+$ and  let $\mathbf{d}=(d_1,\dots,d_n)\in\RR^n$ be an
 admissible multidirection.
 We say that a formal power series
 $\widehat{u}\in\EE[[x]]$ is {\em $\mathbf{K}$-multisummable in the
 multidirection $\mathbf{d}$}
 if $\widehat{u}=\widehat{u}_1+\cdots+\widehat{u}_n$, where $\widehat{u}_j\in\EE[[x]]$ is
 $K_j$-summable
 in the direction $d_j$ for $j=1,\dots,n$.
\end{Df}

Following Sanz \cite{S} we extend the notion of summability to two variables
\begin{Df}
\label{df:summable2}
For $\kappa_1,\kappa_2\in\NN$, $K_1,K_2>0$ and $d_1,d_2\in\RR$ the formal power series
$\widehat{u}\in\CC[[t^{\frac{1}{\kappa_1}},z^{\frac{1}{\kappa_2}}]]$ is called
\emph{$(K_1,K_2)$-summable in the direction $(d_1,d_2)$} if there exist disc-sectors $\widehat{S}_{d_1}$
and $\widehat{S}_{d_2}$ such that
$\Bo_{\Gamma_{1/K_1},t^{1/\kappa_1}}\Bo_{\Gamma_{1/K_2},z^{1/\kappa_2}}\widehat{u}\in
\Oo_{1/\kappa_1,1/\kappa_2}^{K_1,K_2}(\widehat{S}_{d_1}\times\widehat{S}_{d_2})$.
\end{Df}

\begin{Rem}
 \label{re:sum}
 By the general theory of moment summability (see \cite[Section 6.5 and Theorem 38]{B2}), we may replace
 $\Gamma_{1/K}$  in Definition \ref{df:summable} (resp. $\Gamma_{1/K_1}$ and $\Gamma_{1/K_2}$ 
 in Definition \ref{df:summable2}) by any 
 moment function $m$ of order $1/K$ (resp. by any moment functions $m_1$ of order $1/K_1$ and $m_2$ of order $1/K_2$).
\end{Rem}

More general approach to summability in several variables was given by Balser \cite{B6}. Namely, he introduced
\begin{Df}
 \label{df:Bsum}
 Let $s_1,s_2>0$, 
 $O\subset \{(t_0,z_0)\in(\widetilde{\CC\setminus\{0\}})^2\colon \|(t_0,z_0)\|=1\}$ be bounded, open
 and simply connected and let
 \[
  G=\{(t,z)\in(\widetilde{\CC\setminus\{0\}})^2\colon (t,z)=(x^{s_1}t_0,x^{s_2}z_0),\ 
 (t_0,z_0)\in O,\ x>0\}.
 \]
 Then we say that $G$ is \emph{a $(s_1,s_2)$-region of infinity radius with an opening $O$}.
 
 Moreover, for $\kappa_1,\kappa_2\in\NN$ the formal power series
 \[
 \widehat{u}(t,z)=\sum_{j,n=0}^{\infty}u_{jn}t^{j/\kappa_1}z^{n/\kappa_2}\in
 \CC[[t^{\frac{1}{\kappa_1}},z^{\frac{1}{\kappa_2}}]]
 \]
 is called
 \emph{$(1/s_1,1/s_2)$-summable in the direction $O$} if
 \[
 \Bo_{(s_1,s_2)}\widehat{u}(t,z):=\sum_{j,n=0}^{\infty}\frac{u_{jn}}{\Gamma(1+s_1j/\kappa_1+s_2n/\kappa_2)}
 t^{j/\kappa_1}z^{n/\kappa_2}
 \]
 belongs to the 
 space $\Oo_{1/\kappa_1,1/\kappa_2}(G\cup D^2)$ and for every $O'\Subset O$ there exist $A,B>0$ such that
 \begin{eqnarray*}
  |\Bo_{(s_1,s_2)}\widehat{u}(x^{s_1}t_0,x^{s_2}z_0)| \leq Ae^{Bx} & \textrm{for every} & (t_0,z_0)\in O',\ x>0.
 \end{eqnarray*}
\end{Df}

In the paper we will consider only the situation, when $G$ is a polysector $S_{d_1}\times S_{d_2}$ with
an opening  
\[ 
 O=O_{d_1,d_2}:=\{(t_0,z_0)\in S_{d_1}\times S_{d_2}\colon \|(t_0,z_0)\|=1\}.
\]
In this case, immediately by Definition \ref{df:Bsum}, we get
\begin{Prop}
 \label{pr:Bsum}
 Let $s_1,s_2>0$, $d_1,d_2\in\RR$ and $\kappa_1,\kappa_2\in\NN$. Then the formal power series 
 $\widehat{u}\in\CC[[t^{\frac{1}{\kappa_1}},z^{\frac{1}{\kappa_2}}]]$
 is $(1/s_1,1/s_2)$-summable in the direction $O_{d_1,d_2}$ if and only if
 $\Bo_{(s_1,s_2)}\widehat{u}\in\Oo^{1/s_1,1/s_2}_{1/\kappa_1,1/\kappa_2}(\widehat{S}_{d_1}\times \widehat{S}_{d_2})$.
\end{Prop}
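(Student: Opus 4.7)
My approach is to peel off both definitions and verify that, up to matching the domains of holomorphy, they amount to the same exponential estimate written in two different parametrizations. For the opening $O = O_{d_1,d_2}$ the $(s_1,s_2)$-region of Definition~\ref{df:Bsum} is exactly $G = S_{d_1}\times S_{d_2}$, since every $(t,z)\in S_{d_1}\times S_{d_2}$ factors uniquely as $(x^{s_1}t_0, x^{s_2}z_0)$ with $x := \max(|t|^{1/s_1}, |z|^{1/s_2})$ and $(t_0,z_0) := (t/x^{s_1}, z/x^{s_2})$ of unit sup-norm, preserving arguments and hence lying in $O$. Under this parametrization, subsets $O'\Subset O$ correspond precisely to strict sub-polysectors $S_{d_1}(\tilde\varepsilon_1)\times S_{d_2}(\tilde\varepsilon_2)$ with $\tilde\varepsilon_i < \varepsilon_i$, and conversely.

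For the implication $\Oo^{1/s_1,1/s_2}_{1/\kappa_1,1/\kappa_2}(\widehat{S}_{d_1}\times\widehat{S}_{d_2})\Rightarrow$ summability in direction $O_{d_1,d_2}$ I would fix $O'\Subset O$ and use that $|t_0|,|z_0|\le 1$ on $O'$, so that along $(t,z)=(x^{s_1}t_0,x^{s_2}z_0)$ one has $|t|^{1/s_1}\le x$ and $|z|^{1/s_2}\le x$; writing $v := \Bo_{(s_1,s_2)}\widehat{u}$, the bi-exponential bound of Definition~\ref{df:growth} then collapses to the single exponential $|v(x^{s_1}t_0,x^{s_2}z_0)|\le Ae^{(B_1+B_2)x}$ demanded by Definition~\ref{df:Bsum}. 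Conversely, given a point $(t,z)$ with $t,z\ne 0$ in a strict sub-polysector of $\widehat{S}_{d_1}\times\widehat{S}_{d_2}$, I set $x:=\max(|t|^{1/s_1},|z|^{1/s_2})$ and $(t_0,z_0):=(t/x^{s_1},z/x^{s_2})$; the pairs $(t_0,z_0)$ swept out this way all lie in some fixed $O'\Subset O$ determined by the sub-openings, so Balser's bound yields $|v(t,z)|\le Ae^{Bx}\le Ae^{B|t|^{1/s_1}}e^{B|z|^{1/s_2}}$, which is the desired bi-exponential estimate of Definition~\ref{df:growth}. Points on $\{t=0\}\cup\{z=0\}$ are handled by the holomorphy of $v$ on $D^2$ and continuity.

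The growth comparison is essentially a one-line calculation; the subtlety I expect to wrestle with is the matching of the two holomorphy domains. Definition~\ref{df:Bsum} supplies holomorphy only on $G\cup D^2=(S_{d_1}\times S_{d_2})\cup(D\times D)$, whereas $\widehat{S}_{d_1}\times\widehat{S}_{d_2}=(S_{d_1}\cup D)\times(S_{d_2}\cup D)$ is strictly larger. The forward direction is unaffected because $G\cup D^2\subset\widehat{S}_{d_1}\times\widehat{S}_{d_2}$; for the converse one must analytically continue $v$ across the \emph{mixed} pieces $(D\times S_{d_2})\cup(S_{d_1}\times D)$. This should follow from a Hartogs-type argument applied to $v$, using that $v$ is already holomorphic on the full product of disc-sectors for $z$ small (since the $D^2$ slab abuts $G$) and for $t$ small (symmetrically), together with a maximum-modulus argument to propagate the exponential estimate to the enlarged domain after possibly shrinking the radii $r_i$.
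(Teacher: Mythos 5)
Your forward implication (membership in $\Oo^{1/s_1,1/s_2}_{1/\kappa_1,1/\kappa_2}(\widehat{S}_{d_1}\times\widehat{S}_{d_2})$ implies summability in the direction $O_{d_1,d_2}$) is fine, and it is essentially all the paper itself claims, since the proposition is stated as following ``immediately'' from Definition \ref{df:Bsum} with no further argument. The problem is your converse, and both devices you invoke there fail. First, strict sub-polysectors do \emph{not} correspond to sets $O'\Subset O_{d_1,d_2}$: relative compactness in $O_{d_1,d_2}$ forces $|t_0|$ and $|z_0|$ to be bounded away from $0$, whereas the normalized pairs $(t_0,z_0)=(t/x^{s_1},z/x^{s_2})$ produced by a sub-polysector have $|z_0|$ (or $|t_0|$) arbitrarily small as soon as $|z|^{1/s_2}\ll|t|^{1/s_1}$ (or conversely). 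So Balser's estimate, required only for relatively compact $O'$, gives you no bound --- and Definition \ref{df:Bsum} gives you no holomorphy --- exactly on the mixed pieces $(S_{d_1}\setminus D)\times D$ and $D\times(S_{d_2}\setminus D)$ that you need to reach $\widehat{S}_{d_1}\times\widehat{S}_{d_2}$.

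Second, the ``Hartogs-type plus maximum-modulus'' patch cannot work at this level of generality. Take $s_1=s_2=1$, $\kappa_1=\kappa_2=1$, $d_1=d_2=0$, small openings, and $v(t,z)=\bigl(1+z(1+t)\bigr)^{-1}$. For openings with $\varepsilon_1+\varepsilon_2<\pi$ one has $\RE\bigl(z(1+t)\bigr)>0$ on $S_{d_1}\times S_{d_2}$ and $|z(1+t)|<1$ on a small bidisc, so $v$ is holomorphic and bounded on $G\cup D^2$; setting $u_{jn}:=\Gamma(1+j+n)v_{jn}$, where $v_{jn}$ are the Taylor coefficients of $v$ at the origin, gives a formal series $\widehat{u}$ with $\Bo_{(1,1)}\widehat{u}=v$ which is therefore $(1,1)$-summable in the direction $O_{0,0}$ in the sense of Definition \ref{df:Bsum}. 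Yet $v$ has poles along $z=-1/(1+t)$, which lie in $S_{0}(\varepsilon')\times D_{r}$ for every $\varepsilon',r>0$ (take $t>1/r$ real), so $v$ extends to no product of disc-sectors whatsoever and $\Bo_{(1,1)}\widehat{u}\notin\Oo^{1,1}_{1,1}(\widehat{S}_{0}\times\widehat{S}_{0})$ for any choice of openings and radii. Hence no soft continuation argument bridges the two domains for a general $\widehat{u}$; the identification is harmless in the paper only where it is actually exploited, namely in the proof of Theorem \ref{th:summable} (the step from (g) back to (b)--(c)), where the missing continuation and growth on the mixed regions are recovered from the equation through the boundary data, Corollary \ref{co:exp} and Theorem \ref{th:main}, not from Definition \ref{df:Bsum}. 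To prove the converse you must bring in such extra input (or reinterpret the right-hand side); the extension step you propose is precisely the one that breaks.
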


The connection between the Borel type transforms $\Bo_{\Gamma_{s_1,t}}\Bo_{\Gamma_{s_2,z}}$ and $\Bo_{(s_1,s_2)}$
is given in the next lemma.
\begin{Lem}
\label{le:beta}
 Let $s_1,s_2>0$ and $\widehat{u}\in\CC[[t,z]]$. Then the formal power series
 $\widehat{v}(t,z):=\Bo_{\Gamma_{s_1,t}}\Bo_{\Gamma_{s_2,z}}\widehat{u}(t,z)$ and 
 $\widehat{w}(t,z):=\Bo_{(s_1,s_2)}\widehat{u}(t,z)$ are connected by the formula
 \[
  \widehat{w}(t,z)=(1+s_1t\partial_t+s_2z\partial_z)
  \int_0^1\widehat{v}(t\varepsilon^{s_1},z(1-\varepsilon)^{s_2})\,d\varepsilon.
 \]
\end{Lem}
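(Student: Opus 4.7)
The plan is a direct term-by-term calculation on the power series side, since everything is formal and we can freely exchange sums and the integral at the level of coefficients. Writing $\widehat{u}(t,z)=\sum_{j,n\geq0}u_{jn}t^{j}z^{n}$, the definitions of the two Borel-type transforms give
\[
\widehat{v}(t,z)=\sum_{j,n}\frac{u_{jn}}{\Gamma(1+s_{1}j)\Gamma(1+s_{2}n)}\,t^{j}z^{n},
\qquad
\widehat{w}(t,z)=\sum_{j,n}\frac{u_{jn}}{\Gamma(1+s_{1}j+s_{2}n)}\,t^{j}z^{n}.
\]
So the whole statement reduces to verifying one identity between the coefficients of $t^{j}z^{n}$ on both sides.

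First I would substitute $t\mapsto t\varepsilon^{s_{1}}$ and $z\mapsto z(1-\varepsilon)^{s_{2}}$ in the series for $\widehat{v}$, pulling the factor $t^{j}z^{n}$ out, and then integrate term by term in $\varepsilon$ on $[0,1]$. The only non-trivial ingredient that enters is the classical Beta-function evaluation
\[
\int_{0}^{1}\varepsilon^{s_{1}j}(1-\varepsilon)^{s_{2}n}\,d\varepsilon
=B(1+s_{1}j,1+s_{2}n)=\frac{\Gamma(1+s_{1}j)\Gamma(1+s_{2}n)}{\Gamma(2+s_{1}j+s_{2}n)},
\]
valid because $s_{1},s_{2}>0$ and $j,n\geq 0$. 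Thus after the integration the factors $\Gamma(1+s_{1}j)\Gamma(1+s_{2}n)$ in the denominator of $\widehat{v}$ cancel exactly against those produced by the Beta function, leaving
\[
\int_{0}^{1}\widehat{v}(t\varepsilon^{s_{1}},z(1-\varepsilon)^{s_{2}})\,d\varepsilon
=\sum_{j,n}\frac{u_{jn}}{\Gamma(2+s_{1}j+s_{2}n)}\,t^{j}z^{n}.
\]

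Finally, I would apply the differential operator $1+s_{1}t\partial_{t}+s_{2}z\partial_{z}$ monomial by monomial: it acts on $t^{j}z^{n}$ as multiplication by $1+s_{1}j+s_{2}n$. Using the functional equation $\Gamma(2+x)=(1+x)\Gamma(1+x)$ with $x=s_{1}j+s_{2}n$, the resulting factor $(1+s_{1}j+s_{2}n)/\Gamma(2+s_{1}j+s_{2}n)$ collapses to $1/\Gamma(1+s_{1}j+s_{2}n)$, which is precisely the coefficient of $t^{j}z^{n}$ in $\widehat{w}$. This finishes the identification.

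There is no serious obstacle in this argument: all manipulations take place within $\CC[[t,z]]$, so convergence issues do not arise, and the only analytic input is the standard Beta-function identity together with the recursion for $\Gamma$. The mild bookkeeping point worth checking is that the operator $1+s_{1}t\partial_{t}+s_{2}z\partial_{z}$ is applied correctly to the monomial $t^{j}z^{n}$ (yielding the factor $1+s_{1}j+s_{2}n$ rather than, say, $1+s_{1}(j-1)+s_{2}(n-1)$), and that in the integrated series the Gevrey order of $\widehat{u}$ is shifted exactly as expected; both are immediate from inspection of indices.
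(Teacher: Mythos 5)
Your proposal is correct and follows essentially the same route as the paper: expand in monomials, use the Beta-function identity $\int_0^1\varepsilon^{s_1j}(1-\varepsilon)^{s_2n}\,d\varepsilon=\Gamma(1+s_1j)\Gamma(1+s_2n)/\Gamma(2+s_1j+s_2n)$, and then apply $1+s_1t\partial_t+s_2z\partial_z$ coefficientwise via $\Gamma(2+x)=(1+x)\Gamma(1+x)$. The only difference is that you spell out this last cancellation explicitly, which the paper leaves implicit.
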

\begin{proof}
 Let $\widehat{u}(t,z)=\sum_{k,n=0}^{\infty}u_{kn}t^kz^n$. 
 Then 
 \begin{gather*}
 \widehat{v}(t,z)=\sum_{k,n=0}^{\infty}\frac{u_{kn}t^kz^n}{\Gamma(1+ks_1)\Gamma(1+ns_2)}\quad \textrm{and}\quad 
 \widehat{w}(t,z)=\sum_{k,n=0}^{\infty}\frac{u_{kn}t^kz^n}{\Gamma(1+ks_1+ns_2)}.
 \end{gather*}
 Using properties of the beta function
   \[
    \int_0^1\varepsilon^{k s_1}(1-\varepsilon)^{ns_2}\,d\varepsilon = B(1+ks_1+ns_2)=
    \frac{\Gamma(1+ks_1)\Gamma(1+ns_2)}{\Gamma(2+ks_1+ns_2)}.
   \]
we conclude that
\begin{multline*}
 \int_0^1 \widehat{v}(t\varepsilon^{s_1},z(1-\varepsilon)^{s_2})\,d\varepsilon\\
 =
 \sum_{k,n=0}^{\infty}\frac{u_{kn}t^kz^n}{\Gamma(1+ks_1)\Gamma(1+ns_2)}\int_0^1\varepsilon^{ks_1}
 (1-\varepsilon)^{ns_2}\,d\varepsilon\\
 =\sum_{k,n=0}^{\infty}\frac{u_{kn}t^kz^n}{\Gamma(2+ks_1+ns_2)}.
\end{multline*}
Hence
\[
   \widehat{w}(t,z)=(1+s_1t\partial_t+s_2z\partial_z)\int_0^1
   \widehat{v}(t\varepsilon^{s_1},z(1-\varepsilon)^{s_2})\,d\varepsilon.
\]
\end{proof}

\begin{Rem}
In Theorem \ref{th:summable} we will show that if $\widehat{u}\in\CC[[t,z^{\frac{1}{\kappa}}]]$ is a formal 
solution of (\ref{eq:th_sum}) then 
\[\Bo_{\Gamma_{s_1},t}\Bo_{\Gamma_{s_2},z^{1/\kappa}}\widehat{u}\in\Oo^{1/s_1,1/s_2}_{1,1/\kappa}
(\widehat{S}_{d_1}\times \widehat{S}_{d_2})\Leftrightarrow
 \Bo_{(s_1,s_2)}\widehat{u}\in\Oo^{1/s_1,1/s_2}_{1,1/\kappa}(\widehat{S}_{d_1}\times \widehat{S}_{d_2}).
\]
In other words, for such $\widehat{u}$ we have the equivalence between $(1/s_1,1/s_2)$-summability in 
the direction $(d_1,d_2)$ (introduced by Sanz) and  $(1/s_1,1/s_2)$-summability in 
the direction $O_{d_1,d_2}$ (introduced by Balser).
In our opinion it should be possible to extend the general theory of moment summability (see Balser \cite[Section 6.5]{B2}) to 
several variables and to show that the above equivalence holds for every formal power series 
$\widehat{u}\in\CC[t,z]]$.  
\end{Rem}

\section{Moment operators}
     In this section we recall the notion of moment differential operators constructed recently by Balser and Yoshino
     \cite{B-Y}. We also extend the concept of moment pseudodifferential operators introduced in our previous
     paper \cite{Mic7}.
     
   \begin{Df}
    \label{df:moment_diff}
    Let $m$ be a moment function. Then the linear operator
    $\partial_{m_,x}\colon\EE[[x]]\to\EE[[x]]$
    defined by
    \[
     \partial_{m,x}\Big(\sum_{j=0}^{\infty}\frac{u_{j}}{m(j)}x^{j}\Big):=
     \sum_{j=0}^{\infty}\frac{u_{j+1}}{m(j)}x^{j}
    \]
    is called the \emph{$m$-moment differential operator $\partial_{m,x}$}.
    \par
    More generally, if $\kappa\in\NN$ then the linear operator 
   $\partial_{m_,x^{1/\kappa}}\colon\EE[[x^{\frac{1}{\kappa}}]]\to\EE[[x^{\frac{1}{\kappa}}]]$
    defined by
    \[
     \partial_{m,x^{1/\kappa}}\Big(\sum_{j=0}^{\infty}\frac{u_{j}}{m(j/\kappa)}x^{j/\kappa}\Big):=
     \sum_{j=0}^{\infty}\frac{u_{j+1}}{m(j/\kappa)}x^{j/\kappa}
    \]
    is called the \emph{$m$-moment $1/\kappa$-fractional differential operator $\partial_{m,x^{1/\kappa}}$}.\end{Df}
  
   \begin{Ex}
    \label{ex:operator}
    Below we present some examples of moment differential operators.
    \begin{itemize}
     \item For $m(u)=\Gamma_1(u)$, the operator $\partial_{m,x}$ coincides with the usual differentiation
     $\partial_x$.
     \item For $m(u)=\Gamma_s(u)$ ($s>0$), the operator $\partial_{m,x}$ satisfies
     \[
      (\partial_{m,x}\widehat{u})(x^s)=\partial^s_x(\widehat{u}(x^s)),
     \]
     where $\partial^s_x$ is the Caputo fractional derivative of order $s$
     defined by
       $$
     \partial^{s}_{x}\Big(\sum_{j=0}^{\infty}\frac{u_{j}}{\Gamma_s(j)}x^{sj}\Big):=
     \sum_{j=0}^{\infty}\frac{u_{j+1}}{\Gamma_s(j)}x^{sj}.$$
     \item For $m(u)\equiv 1$, the corresponding operator $\partial_{m,x}$ satisfies 
     \begin{gather*}
     \partial_{m,x}\widehat{u}(x)=
     \frac{\widehat{u}(x)-u_0}{x} \quad \textrm{for every} \quad \widehat{u}(x)=\sum_{j=0}^{\infty}u_jx^j\in\EE[[x]].
     \end{gather*}
     \item For $m(u)=\Gamma_{-1}(u)$, the operator $\partial_{m,x}$ satisfies
     \begin{gather*}
     \partial_{m,x}\widehat{u}(x)=\frac{1}{x}\int_0^x\frac{\widehat{u}(y)-u_0}{y}\,dy \quad \textrm{for every} \quad
     \widehat{u}(x)=\sum_{j=0}^{\infty}u_jx^j\in\EE[[x]].
     \end{gather*}
     \item For $m(u)=\Gamma_{-s}(u)$ ($s>0$), the operator $\partial_{m,x}$ satisfies
     \begin{gather*}
     (\partial_{m,x}\widehat{u})(x^s)=\frac{1}{x^s}\partial^{-s}_x\frac{\widehat{u}(x^s)-u_0}{x^s}
     \quad \textrm{for every} \quad
     \widehat{u}(x)=\sum_{j=0}^{\infty}{u_j}x^j\in\EE[[x]],
     \end{gather*}
     where
     $\partial^{-s}_x$ is the right-inversion operator to $\partial^s_x$ and is defined by
     \[
     \partial^{-s}_{x}\Big(\sum_{j=0}^{\infty}\frac{u_{j}}{\Gamma_s(j)}x^{sj}\Big):=
     \sum_{j=1}^{\infty}\frac{u_{j-1}}{\Gamma_s(j)}x^{sj}.
     \]
\end{itemize}
   \end{Ex}
 \par
 The moment differential operator $\partial_{m,z}$ is well-defined for every $\varphi\in\Oo(D)$. In addition,
 we have the following integral representation of $\partial^n_{m,z}\varphi$.
\begin{Prop}[see {\cite[Proposition 3]{Mic7}}]
 Let $\varphi\in \Oo(D_r)$ and $m$ be a moment function of order $1/k>0$. Then for every $|z|<\varepsilon<r$ and 
 $n\in\NN$ we have
 \[
  \partial_{m,z}^n\varphi(z) = \frac{1}{2\pi i} \oint_{|w|=\varepsilon} \varphi(w)
  \int_0^{\infty(\theta)}\zeta^{n}E_m(z\zeta)\frac{e_m(w\zeta)}{w\zeta}\,d\zeta\,dw,
 \]
 where $\theta\in (-\arg w-\frac{\pi}{2k}, -\arg w + \frac{\pi}{2k})$.
\end{Prop}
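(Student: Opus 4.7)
The plan is to expand both $\varphi$ and $E_m$ into power series, reduce the inner $\zeta$-integral to the moment $m(n+l)$ via a contour rotation, and then identify the resulting double series with the defining expansion of $\partial_{m,z}^n\varphi$ via the Cauchy coefficient formula. Writing $\varphi(w)=\sum_{j\geq 0}a_j w^j$ on $D_r$, Definition \ref{df:moment_diff} gives
\[
 \partial_{m,z}^n\varphi(z)=\sum_{l=0}^{\infty}\frac{m(n+l)}{m(l)}\,a_{n+l}\,z^l,
\]
which is the target series; convergence on some neighborhood of the origin is ensured by the polynomial control of $m(n+l)/m(l)$ coming from Proposition \ref{pr:order}.

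Substituting $E_m(z\zeta)=\sum_{l\geq 0}(z\zeta)^l/m(l)$ from \eqref{eq:E_m} into the $\zeta$-integrand on the right-hand side of the claim produces $\sum_{l}\frac{z^l}{m(l)\,w}\,\zeta^{n+l-1}e_m(w\zeta)$. The admissibility condition $\theta\in(-\arg w-\pi/(2k),-\arg w+\pi/(2k))$ forces the ray $\arg(w\zeta)=\arg w+\theta$ to lie inside the sector $S_0(\pi/k)$, on which $e_m$ is holomorphic and exponentially flat of order $k$. Cauchy's theorem then permits the change of variables $\xi=w\zeta$ followed by a deformation of the $\xi$-contour back to $\RR_+$, giving
\[
 \int_0^{\infty(\theta)}\zeta^{n+l-1}e_m(w\zeta)\,d\zeta=\frac{1}{w^{n+l}}\int_0^{\infty}\xi^{n+l-1}e_m(\xi)\,d\xi=\frac{m(n+l)}{w^{n+l}},
\]
by \eqref{eq:e_m}. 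The outer $w$-contour integral then collapses via $\frac{1}{2\pi i}\oint_{|w|=\varepsilon}\varphi(w)/w^{n+l+1}\,dw=a_{n+l}$ to precisely the target series.

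The step I expect to require the most care is the twofold interchange of the $l$-sum with the two integrations. To pull the sum outside the $\zeta$-integral, I would exploit the exponential flatness of $e_m$ on any strict subsector of $S_0(\pi/k)$, together with the lower bound $m(l)\geq c^l\Gamma_{1/k}(l)$ from Proposition \ref{pr:order}, to produce an integrable majorant uniform in $l$. To pull it outside the $w$-contour, I would combine the Cauchy estimate $|a_{n+l}|\leq M/\varepsilon^{n+l}$ with the upper bound $m(n+l)\leq C^{n+l}\Gamma_{1/k}(n+l)$ and Stirling, which yields a geometric majorant once $\varepsilon<r$ is chosen so that $|z|$ is sufficiently small relative to $\varepsilon$ to absorb the implicit constant $C/c$. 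Once Fubini applies, the identification above completes the proof; everything else is bookkeeping.
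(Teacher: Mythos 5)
Your strategy is the natural one and, as far as this paper is concerned, the right one: the paper itself gives no proof (it only cites \cite{Mic7}), and the standard argument there is exactly your term-by-term computation — expand $E_m$ by (\ref{eq:E_m}), rotate the ray $\arg\zeta=\theta$ onto $\RR_+$ inside the sector where $e_m$ is flat to get $\int_0^{\infty(\theta)}\zeta^{n+l-1}e_m(w\zeta)\,d\zeta=m(n+l)/w^{n+l}$ via (\ref{eq:e_m}), and then use the Cauchy coefficient formula to recover $\sum_l \frac{m(n+l)}{m(l)}a_{n+l}z^l=\partial^n_{m,z}\varphi(z)$. All of that is correct, as is your identification of the interchange of sum and integrals as the delicate step.

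The gap is precisely in how you propose to justify that interchange, and it affects the range of validity. First, a small inaccuracy: Proposition \ref{pr:order} does not give \emph{polynomial} control of $m(n+l)/m(l)$; it only gives control up to a geometric factor $(C/c)^l$, which already signals that these coarse bounds lose constants. More seriously, your dominating function for the $\zeta$-integral, built from the flatness bound $|e_m(w\zeta)|\leq Ae^{-(|w\zeta|/B)^k}$ and the lower bound $m(l)\geq c^l\Gamma_{1/k}(l)$, is of the form $|\zeta|^{n-1}\exp\bigl((|z||\zeta|/c)^k-(\varepsilon|\zeta|/B)^k\bigr)$, which is integrable only when $|z|<c\varepsilon/B$ — and indeed you concede at the end that you need $|z|$ ``sufficiently small relative to $\varepsilon$''. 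But the Proposition asserts the representation for \emph{every} $|z|<\varepsilon<r$, and since the contour radius is capped by $r$ you cannot reach all such $z$ by enlarging $\varepsilon$; nor can you finish by analytic continuation in $z$ without first knowing that the right-hand side converges and is holomorphic on the whole disc $|z|<\varepsilon$, which is exactly what is in question. Closing this requires the finer, matched properties of a kernel pair of order $k$ (Balser, Section 5.5, as used in \cite{Mic7}): the exponential growth of $E_m$ of order $k$ is tied to the decay of $e_m$ through $m$ itself, and $E_m(u)$ is bounded (indeed small) as $u\to\infty$ in directions away from $\RR_+$, which permits a choice of $\theta$, depending on $\arg w$ and $\arg z$, making the inner integral absolutely and locally uniformly convergent for all $|z|<\varepsilon$. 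With those kernel estimates imported, your term-by-term identification goes through; without them, your argument proves the formula only on a strictly smaller disc than stated.
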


Using the above formula, we have defined in \cite[Definition 8]{Mic7} a moment
pseudodifferential operator $\lambda(\partial_{m,z})\colon \Oo(D)\to\Oo(D)$ as an operator satisfying
\begin{eqnarray*}
  \lambda(\partial_{m,z}) E_m(\zeta z):=\lambda(\zeta)E_m(\zeta z)&\textrm{for}& |\zeta|\geq r_0.
\end{eqnarray*}
Namely, if $\lambda(\zeta)$ is 
an analytic function for $|\zeta|\geq r_0$ then $\lambda(\partial_{m,z})$ is defined by
\[
\lambda(\partial_{m,z})\varphi(z):=\frac{1}{2\pi i} \oint_{|w|=\varepsilon} \varphi(w)
  \int_{r_0e^{i\theta}}^{\infty(\theta)}\lambda(\zeta)E_m(\zeta z)\frac{e_m(\zeta w)}{\zeta w}\,d\zeta\,dw
\]
for every $\varphi\in\Oo(D_r)$ and $|z|<\varepsilon < r$, where
$\theta\in (-\arg w-\frac{\pi}{2k}, -\arg w + \frac{\pi}{2k})$.
\par
We extend this definition to the case where $\lambda(\zeta)$ is an analytic function of the variable
$\xi=\zeta^{1/\kappa}$ for $|\zeta|\geq r_0$ (for some $\kappa\in\NN$ and $r_0>0$.
Since 
$(\partial_{m,z}\varphi)(z^{\kappa})=\partial_{\widetilde{m},z}^{\kappa}(\varphi(z^{\kappa}))$ for every $\varphi\in\Oo(D)$, where $\widetilde{m}(u):=m(u/\kappa)$ (see \cite[Lemma 3]{Mic7}),
the operator $\lambda(\partial_{m,z})$ should satisfy the formula
\begin{eqnarray}
 \label{eq:kappa}
 (\lambda(\partial_{m,z})\varphi)(z^{\kappa})=\lambda(\partial^{\kappa}_{\widetilde{m},z})(\varphi(z^{\kappa})) &
 \textrm{for every} & \varphi\in\Oo_{1/\kappa}(D).
\end{eqnarray}
For this reason we have
\begin{Df}
\label{df:pseudo_1}
Let $m$ be a moment function of order $1/k>0$ and
 $\lambda(\zeta)$ be an analytic function of the variable
$\xi=\zeta^{1/\kappa}$ for $|\zeta|\geq r_0$
(for some $\kappa\in\NN$ and $r_0>0$) of polynomial growth at infinity.
A \emph{moment pseudodifferential operator}
 $\lambda(\partial_{m,z})\colon\Oo_{1/\kappa}(D)\to\Oo_{1/\kappa}(D)$ (or, more generally, $\lambda(\partial_{m,z})\colon\EE[[z^{\frac{1}{\kappa}}]]_0\to\EE[[z^{\frac{1}{\kappa}}]]_0$) is defined by
 \begin{equation}
  \label{eq:lambda}
  \lambda(\partial_{m,z})\varphi(z):=\frac{1}{2\kappa\pi i} \oint^{\kappa}_{|w|=\varepsilon}\varphi(w)
  \int_{r_0e^{i\theta}}^{\infty(\theta)}\lambda(\zeta)
  E_{\widetilde{m}}(\zeta^{1/\kappa} z^{1/\kappa})\frac{e_m(\zeta w)}{\zeta w}\,d\zeta\,dw
 \end{equation}
for every $\varphi\in\Oo_{1/\kappa}(D_r)$ and $|z|<\varepsilon < r$, where $\widetilde{m}(u):=m(u/\kappa)$, $E_{\widetilde{m}}(\zeta^{1/\kappa} z^{1/\kappa})=\sum_{n=0}^{\infty}\frac{\zeta^{n/\kappa}z^{n/\kappa}}{\widetilde{m}(n)}$, $\theta\in (-\arg w-\frac{\pi}{2k}, -\arg w + \frac{\pi}{2k})$ and $\oint_{|w|=\varepsilon}^{\kappa}$ means that we integrate $\kappa$ times along the positively oriented circle of radius $\varepsilon$. Here the integration in the inner integral is taken over a ray $\{re^{i\theta}\colon r\geq r_0\}$.
\end{Df}

Observe that
\begin{multline*}
(\lambda(\partial_{m,z})\varphi)(z^{\kappa})=\frac{1}{2\kappa\pi i}\oint_{|w|=\varepsilon}^{\kappa} \varphi(w)
  \int\limits_{r_0e^{i\theta}}^{\infty(\theta)}\lambda(\zeta)E_{\widetilde{m}}(\zeta^{1/\kappa}z)
  \frac{e_m(\zeta w)}{\zeta w}\,d\zeta\,dw\\
  =\frac{1}{2\pi i}\oint_{|w^{\kappa}|=\varepsilon}\varphi(w^{\kappa})
  \int\limits_{r_0^{1/\kappa}e^{i\theta/\kappa}}^{\infty(\theta/\kappa)}\lambda(\zeta^k)
  E_{\widetilde{m}}(\zeta z)\frac{e_{\widetilde{m}}(\zeta w)}{\zeta w}\,d\zeta\,dw=
 \lambda(\partial^{\kappa}_{\widetilde{m},z})(\varphi(z^{\kappa})),
\end{multline*}
so (\ref{eq:kappa}) holds for the operators $\lambda(\partial_{m,z})$ defined by (\ref{eq:lambda}).
\par
Immediately by the definition, we obtain the following connection between the moment Borel transform and the moment
differentiation.
\begin{Prop}
\label{pr:commutation}
Let $m$ and $m'$ be moment functions. Then
the operators $\Bo_{m',x}, \partial_{m,x}\colon\EE[[x]]\to\EE[[x]]$ satisfy the following commutation formulas for every $\widehat{u}\in\EE[[x]]$ and for $\overline{m}=mm'$:
\begin{enumerate}
\item[i)] $\Bo_{m',x}\partial_{m,x}\widehat{u}=\partial_{\overline{m},x}\Bo_{m',x}\widehat{u}$,
\item[ii)] $\Bo_{m',x}P(\partial_{m,x})\widehat{u}=P(\partial_{\overline{m},x})\Bo_{m',x}\widehat{u}$
for any polynomial $P$ with constant coefficients.
\end{enumerate}
\end{Prop}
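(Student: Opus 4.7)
The plan is to verify (i) by a direct coefficient-wise computation on an arbitrary $\widehat{u}\in\EE[[x]]$, and then to bootstrap to (ii) by induction together with the linearity of both operators. Both $\partial_{m,x}$ and $\Bo_{m',x}$ act term-by-term on the monomials $x^j$, so everything reduces to comparing two scalar sequences indexed by $j$.

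First I would write $\widehat{u}(x)=\sum_j a_j x^j$ in the form $\sum_j \frac{u_j}{m(j)}x^j$ with $u_j=a_j m(j)$, as required by Definition \ref{df:moment_diff}, so that $\partial_{m,x}\widehat{u}=\sum_j a_{j+1}\frac{m(j+1)}{m(j)}x^j$. Composing with $\Bo_{m',x}$ divides the $j$-th coefficient by $m'(j)$, yielding $\sum_j a_{j+1}\frac{m(j+1)}{m(j)m'(j)}x^j$. For the right-hand side I would first apply $\Bo_{m',x}$ to obtain $\sum_j \frac{a_j}{m'(j)}x^j$ and then $\partial_{\overline{m},x}$, which introduces the factor $\overline{m}(j+1)/\overline{m}(j)=m(j+1)m'(j+1)/(m(j)m'(j))$; the $m'(j+1)$ in this factor cancels against the denominator coming from the Borel transform, giving exactly the same coefficient $a_{j+1}\frac{m(j+1)}{m(j)m'(j)}$. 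This cancellation is precisely what the assumption $\overline{m}=mm'$ provides, and it is the only substantive point in the argument.

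For (ii) I would proceed by induction on $k$: assuming $\Bo_{m',x}\partial_{m,x}^{k-1}=\partial_{\overline{m},x}^{k-1}\Bo_{m',x}$, one more application of (i) gives $\Bo_{m',x}\partial_{m,x}^{k}=\partial_{\overline{m},x}\Bo_{m',x}\partial_{m,x}^{k-1}=\partial_{\overline{m},x}^{k}\Bo_{m',x}$. Linearity of $\Bo_{m',x}$ then extends this intertwining to any polynomial $P(\partial_{m,x})$ with constant coefficients. Since everything takes place inside the purely formal ring $\EE[[x]]$ with no convergence issues, I do not anticipate any genuine obstacle; the whole proof is an exercise in keeping the shifted indices and the two moment functions straight, with the identity $\overline{m}=mm'$ doing all the work.
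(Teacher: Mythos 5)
Your proof is correct, and it is essentially the argument the paper has in mind: the paper states Proposition \ref{pr:commutation} as following ``immediately by the definition,'' and your coefficient-wise verification of (i) (with the cancellation of $m'(j+1)$ coming from $\overline{m}=mm'$) followed by induction and linearity for (ii) is exactly that routine check, carried out explicitly.
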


The same commutation formula holds if we replace $P(\partial_{m,x})$ by $\lambda(\partial_{m,x})$. Namely, we have
\begin{Prop}
\label{pr:commutation2}
Let $m$ and $m'$ be moment functions and $\lambda(\zeta)$ be an analytic function of the variable
$\xi=\zeta^{1/\kappa}$ for $|\zeta|\geq r_0$
(for some $\kappa\in\NN$ and $r_0>0$) of polynomial growth at infinity. Then
the operators $\Bo_{m',x^{1/\kappa}}, \lambda(\partial_{m,x})\colon\EE[[x^{1/\kappa}]]_0\to\EE[[x^{1/\kappa}]]_0$ 
satisfy the commutation formula
$$\Bo_{m',x^{1/\kappa}}\lambda(\partial_{m,x})\widehat{u}=\lambda(\partial_{\overline{m},x})
\Bo_{m',x^{1/\kappa}}\widehat{u}$$
for every $\widehat{u}\in\EE[[x^{1/\kappa}]]_0$ and for $\overline{m}=mm'$.
\end{Prop}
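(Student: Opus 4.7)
The plan is to verify the commutation identity on the monomial basis $\{x^{j/\kappa}: j \in \NN\}$ of $\EE[[x^{1/\kappa}]]_0$ and extend by linearity together with the continuity of both operators on the germ algebra of holomorphic functions at the origin.

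The central step is a closed-form expression for $\lambda(\partial_{m,x}) x^{j/\kappa}$. Writing the Laurent expansion $\lambda(\zeta) = \sum_{l \leq N} c_l\,\zeta^{l/\kappa}$ valid on $|\zeta|>r_0$ (with $N$ finite by polynomial growth and the $c_l$ decaying rapidly as $l\to -\infty$), substituting $\varphi(w)=w^{j/\kappa}$ into (\ref{eq:lambda}), expanding $E_{\widetilde m}(\zeta^{1/\kappa}x^{1/\kappa})=\sum_n \zeta^{n/\kappa} x^{n/\kappa}/m(n/\kappa)$, performing the substitution $u=\zeta w$ in the inner integral, and applying the moment identity after deforming the ray to the positive real axis (permitted by analyticity of $e_m$ in $S_0(\pi/k)$ and its exponential decay), one sees that the outer $w$-contour integral $\oint_{|w|=\varepsilon}^{\kappa} w^{(j-n-l)/\kappa-1}\,dw$ vanishes unless $l=j-n$. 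Collecting survivors,
\[
\lambda(\partial_{m,x}) x^{j/\kappa} = \sum_{l\leq\min(N,j)} c_l\,\frac{m(j/\kappa)}{m((j-l)/\kappa)}\,x^{(j-l)/\kappa}.
\]

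Given this formula the commutation is a direct algebraic check. Applying $\Bo_{m',x^{1/\kappa}}$ divides the coefficient of $x^{(j-l)/\kappa}$ by $m'((j-l)/\kappa)$, yielding
\[
\Bo_{m',x^{1/\kappa}}\lambda(\partial_{m,x}) x^{j/\kappa} = \sum_l c_l\,\frac{m(j/\kappa)}{\overline m((j-l)/\kappa)}\,x^{(j-l)/\kappa}.
\]
On the other side, $\Bo_{m',x^{1/\kappa}} x^{j/\kappa} = x^{j/\kappa}/m'(j/\kappa)$, and the analogous monomial formula with $m$ replaced by $\overline m$ produces
\[
\lambda(\partial_{\overline m,x})\Bo_{m',x^{1/\kappa}} x^{j/\kappa} = \sum_l c_l\,\frac{\overline m(j/\kappa)}{m'(j/\kappa)\,\overline m((j-l)/\kappa)}\,x^{(j-l)/\kappa}.
\]
Using $\overline m(j/\kappa) = m(j/\kappa)\,m'(j/\kappa)$ the factor $m'(j/\kappa)$ cancels and the two series coincide term by term.

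The main technical obstacle is rigorously establishing the closed-form monomial formula: justifying the interchange of the Laurent summation with the contour integrations (via uniform estimates combining the decay of $c_l$ with the exponential decay of $e_m$) and deforming the $\zeta$-contour through the sector of analyticity of $e_m$. The negative-power part of $\lambda$ (which is singular at $\zeta=0$) deserves care, but since the $w$-integration selects only $l=j-n$ the relevant moment $m((n+l)/\kappa)=m(j/\kappa)$ is always well-defined, and the apparent singularity of $\lambda$ at the origin plays no role because the contour starts at $|\zeta|=r_0>0$.
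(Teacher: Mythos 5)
The core of your argument --- the closed-form action of $\lambda(\partial_{m,x})$ on monomials --- is not justified, and for the operator actually defined by (\ref{eq:lambda}) it is false. Your derivation invokes the moment identity $\int_0^{\infty}u^{p-1}e_m(u)\,du=m(p)$ after substituting $u=\zeta w$, but the inner integral in (\ref{eq:lambda}) starts at $r_0e^{i\theta}$, not at $0$; the missing piece over $(0,r_0e^{i\theta})$ is not a pure power of $w$, so it is not killed by the $w$-integration, and the ``selection'' of $l=j-n$ applies only to the principal term. Your closing remark that the truncation at $|\zeta|=r_0$ is harmless is exactly backwards: it is what prevents the moment identity from applying. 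Concretely, take $\kappa=1$, $m=\Gamma_1$ (so $E_m(\zeta x)=e^{\zeta x}$ and $e_m(\zeta w)/(\zeta w)=e^{-\zeta w}$), $\lambda(\zeta)=\zeta$, $\varphi(w)=w$, and $\theta=-\arg w$: interchanging the integrals and averaging over the circle, (\ref{eq:lambda}) yields the constant $(1+r_0\varepsilon)e^{-r_0\varepsilon}$ rather than the value $1=m(1)/m(0)$ your formula predicts; similarly, for $\lambda(\zeta)=1/\zeta$ and $\varphi\equiv 1$ one gets $xe^{-r_0\varepsilon}$ rather than $x$. So verifying the proposition by evaluating both sides on monomials via that formula cannot work as written. (There are further problems in the same step: for very negative $l$ the quantities $m((n+l)/\kappa)$ appearing before the selection are not defined, and the Laurent series of $\lambda$ need not converge at $|\zeta|=r_0$, so even the formal term-by-term interchange needs repair.)

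The paper's proof avoids all of this because it never evaluates the operator: one applies $\Bo_{m',x^{1/\kappa}}$ under the two integrals in (\ref{eq:lambda}), where it acts only on the unique $x$-dependent factor and converts $E_{\widetilde m}(\zeta^{1/\kappa}x^{1/\kappa})$ into $E_{\widetilde{\overline m}}(\zeta^{1/\kappa}x^{1/\kappa})$ with $\widetilde{\overline m}(u)=m(u/\kappa)m'(u/\kappa)$, and then recognizes the resulting integral as $\lambda(\partial_{\overline m,x})\Bo_{m',x^{1/\kappa}}\widehat u$ via the eigenrelation $\lambda(\partial_{\overline m,x})E_{\widetilde{\overline m}}(\zeta^{1/\kappa}x^{1/\kappa})=\lambda(\zeta)E_{\widetilde{\overline m}}(\zeta^{1/\kappa}x^{1/\kappa})$; the identity therefore holds for whatever fixed $r_0$, $\theta$, $\varepsilon$ enter the definition, truncation corrections included. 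If you insist on a basis argument you would need both an exact monomial formula for the truncated operator (including the corrections from the missing segment) and a proof of the continuity/density step you only assert: continuity of $\lambda(\partial_{m,x})$ on germs (sup-norm bounds of the type used in Lemma \ref{le:estimation}) and of $\Bo_{m',x^{1/\kappa}}$ on $\EE[[x^{1/\kappa}]]_0$, together with convergence of the image series. Both burdens disappear if you argue under the integral sign as the paper does.
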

\begin{proof}
 Note that, by Proposition \ref{pr:moments}, $\overline{m}=mm'$ is a moment function.
 Observe that by Definition \ref{df:pseudo} we have
 \begin{align*}
  &\Bo_{m',x^{1/\kappa}}\lambda(\partial_{m,x})\widehat{u}(x)\\
  =&\frac{1}{2\kappa\pi i} \oint^{\kappa}_{|w|=\varepsilon} u(w)
  \int_{r_0e^{i\theta}}^{\infty(\theta)}\lambda(\zeta)\Bo_{m',x^{1/\kappa}}
  E_{\widetilde{m}}(\zeta^{1/\kappa} x^{1/\kappa})
  \frac{e_{m}(\zeta w)}{\zeta w}\,d\zeta\,dw\\
  =&\frac{1}{2\kappa\pi i} \oint^{\kappa}_{|w|=\varepsilon} u(w)
  \int_{r_0e^{i\theta}}^{\infty(\theta)}\lambda(\zeta)E_{\widetilde{\overline{m}}}(\zeta^{1/\kappa} x^{1/\kappa})
  \frac{e_{m}(\zeta w)}{\zeta w}\,d\zeta\,dw\\
  =&\lambda(\partial_{\overline{m},x})\frac{1}{2\kappa\pi i} \oint^{\kappa}_{|w|=\varepsilon} u(w)
  \int_{r_0e^{i\theta}}^{\infty(\theta)}\Bo_{m',x^{1/\kappa}}E_{\widetilde{m}}(\zeta^{1/\kappa} x^{1/\kappa})
  \frac{e_{m}(\zeta w)}{\zeta w}\,d\zeta\,dw\\
  =&\lambda(\partial_{\overline{m},x})\Bo_{m',x^{1/\kappa}}\widehat{u}(x),
 \end{align*}
 where $\widetilde{m}(u):=m(u/\kappa)$ and
 $\widetilde{\overline{m}}(u):=\overline{m}(u/\kappa)=m(u/\kappa)m'(u/\kappa)$.
\end{proof}

Using Proposition \ref{pr:commutation2} we are able to extend Definition \ref{df:pseudo_1} to the formal power series and to the moment functions of real orders. 
\begin{Df}
\label{df:pseudo}
 Let $s\in\RR$, $m$ be a moment function of order $\widetilde{s}\in\RR$ and 
 $\lambda(\zeta)$ be an analytic function of the variable
$\xi=\zeta^{1/\kappa}$ for $|\zeta|\geq r_0$
of polynomial growth at infinity.
A \emph{moment pseudodifferential operator
 $\lambda(\partial_{m,z})$} for the formal power series $\widehat{\varphi}\in\EE[[z^{\frac{1}{\kappa}}]]_s$
 is defined by
 \[
  \lambda(\partial_{m,z})\widehat{\varphi}(z):=\Bo_{\Gamma_{-\overline{s}},z^{1/\kappa}}\lambda(\partial_{\overline{m},z})
  \Bo_{\Gamma_{\overline{s}},z^{1/\kappa}}\widehat{\varphi}(z),
 \]
 where $\overline{m}=m\Gamma_{\overline{s}}$, $\overline{s}=\max\{s,\widetilde{s}+1\}$
 and the operator
 $\lambda(\partial_{\overline{m},z})$ is constructed in Definition \ref{df:pseudo_1}.
\end{Df}
  
\begin{Df}[{\cite[Definition 9]{Mic7}}]
   We define a \emph{pole order $q\in\QQ$} and a \emph{leading term $\lambda\in\CC\setminus\{0\}$}
   of $\lambda(\zeta)$ as the numbers satisfying the formula
   $\lim_{\zeta\to\infty}\lambda(\zeta)/\zeta^q=\lambda$.
   We write it also $\lambda(\zeta)\sim\lambda\zeta^q$.
\end{Df}

At the end of the section we improve the estimate given in \cite[Lemma 1]{Mic7} as follows
\begin{Lem}
\label{le:estimation}
Let $\widehat{\varphi}\in\CC[[z^{\frac{1}{\kappa}}]]_s$, $s\leq 0$, $m$ be a moment function of order $1/k>0$ and $\lambda(\partial_{m,z})$ be a moment pseudodifferential
operator with $\lambda(\zeta)\sim\lambda\zeta^q$ and $q\in\QQ$.
Then there exist $r>0$ and $A,B<\infty$ such that
\begin{gather*}
 \sup_{|z|<r}|\lambda^j(\partial_{m,z})\varphi(z)|\leq |\lambda|^jAB^{j}\Gamma_{\overline{q}(s+1/k)}(j)\quad \textrm{for} 
 \quad
 j=0,1,\dots,
\end{gather*}
where $\overline{q}:=\max\{0,q\}$.
\end{Lem}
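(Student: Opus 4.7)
The plan is to apply the integral representation of Definition~\ref{df:pseudo_1} with $\lambda^j$ in place of $\lambda$. Since $s\leq 0$, the series $\widehat{\varphi}$ is convergent with sum $\varphi$: holomorphic on a disc $D_R$ when $s=0$, entire of exponential order $K:=-1/s$ when $s<0$. Thus $\varphi\in\Oo_{1/\kappa}(D)$, and $\lambda^j(\zeta)$ is analytic for $|\zeta|\geq r_0$ with $|\lambda^j(\zeta)|\leq C|\lambda|^j|\zeta|^{jq}$ and of polynomial growth, so Definition~\ref{df:pseudo_1} applies and produces
\[
 \lambda^j(\partial_{m,z})\varphi(z)=\frac{1}{2\kappa\pi i}\oint_{|w|=\varepsilon}^{\kappa}\varphi(w)\int_{r_0e^{i\theta}}^{\infty(\theta)}\lambda^j(\zeta)E_{\widetilde{m}}(\zeta^{1/\kappa}z^{1/\kappa})\frac{e_m(\zeta w)}{\zeta w}\,d\zeta\,dw.
\]
(That this agrees with the $j$-fold iterate $(\lambda(\partial_{m,z}))^j$---and hence with the object defined through Definition~\ref{df:pseudo} via $\overline{m}=m\Gamma_{\overline{s}}$---follows from a routine interchange of integrals using the decay of $e_m$ and the growth of $E_{\widetilde{m}}$, or alternatively from iterating Proposition~\ref{pr:commutation2}.)

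Next, I bound the integrand using $|\varphi(w)|\leq A_0 e^{B_0\varepsilon^K}$ on $|w|=\varepsilon$ (understood as a uniform bound when $s=0$ and $\varepsilon\leq R/2$), $|E_{\widetilde{m}}(\zeta^{1/\kappa}z^{1/\kappa})|\leq C_1 e^{B_1|\zeta z|^k}$, and the exponential flatness $|e_m(\zeta w)|\leq C_2 e^{-B_2|\zeta w|^k}$. Writing $\zeta=\rho e^{i\theta}$, the combined exponential along the ray becomes $\exp(-c\rho^k(\varepsilon^k-c'|z|^k))$, which is uniformly negative once $|z|<r$ for a fixed small $r$ independent of $\varepsilon$. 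For $q>0$ the substitution $u=c\varepsilon^k\rho^k$ gives
\[
 \int_{r_0}^{\infty}\rho^{jq-1}e^{-c\varepsilon^k\rho^k}\,d\rho\sim\Gamma_{q/k}(j)\cdot C^j\varepsilon^{-jq}.
\]
For $q\leq 0$ the factor $\rho^{jq-1}$ is bounded on $\rho\geq r_0$ and the ray integral is $O(C^j)$ with no Gamma factor.

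It remains to balance $e^{B_0\varepsilon^K}\varepsilon^{-jq}$ in $\varepsilon$. When $s=0$, or when $q\leq 0$, no optimization is needed and one directly obtains $|\lambda^j(\partial_{m,z})\varphi(z)|\leq|\lambda|^jAB^j\Gamma_{\overline{q}(s+1/k)}(j)$ (the Gamma factor being $\Gamma_{q/k}(j)$ when $s=0$, $q>0$ and trivially $\Gamma_0(j)=1$ when $\overline{q}=0$). For $s<0$ and $q>0$ I choose the optimal $\varepsilon^K\sim jq/(B_0K)$; Stirling gives $e^{B_0\varepsilon^K}\varepsilon^{-jq}\sim C^j/\Gamma(1+jq/K)=C^j/\Gamma_{-sq}(j)$, and combining with the ray-integral factor,
\[
 |\lambda^j(\partial_{m,z})\varphi(z)|\leq|\lambda|^jAB^j\,\Gamma_{q/k}(j)/\Gamma_{-sq}(j)\sim|\lambda|^jAB^j\,\Gamma_{q(s+1/k)}(j),
\]
which is the claim with $\overline{q}=q$. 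The main obstacle is the final Stirling manipulation---checking that the quotient $\Gamma(1+jq/k)/\Gamma(1+(-s)qj)$ of two Gamma functions with $j$-linear arguments collapses (up to geometric losses absorbed into $B^j$) to the single $\Gamma_{q(s+1/k)}(j)$, uniformly in the sign of $s+1/k$. A secondary issue is checking that the constant $r$ can be chosen uniformly in $j$ even as the optimal $\varepsilon$ grows with $j$; this is automatic, since $|z|<r$ fixed and $\varepsilon\to\infty$ imply $\varepsilon^k-c'|z|^k>\varepsilon^k/2$ for all large $j$.
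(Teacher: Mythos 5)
Your argument is correct and takes essentially the same route as the paper: the paper reaches the same intermediate bound $|\lambda|^jA_1B_1^j\Gamma_{\overline{q}/k}(j)\,\varepsilon^{-j\overline{q}}\sup_{|w|=\varepsilon}|\varphi(w)|$ by citing Lemma 1 of \cite{Mic7} (which you rederive directly from the integral representation of Definition \ref{df:pseudo_1}), then for $s<0$ picks $\varepsilon=\big(\tfrac{-sj\overline{q}}{B_2}\big)^{-s}$ and applies Stirling exactly as in your optimization, treating $s=0$ and $\overline{q}=0$ trivially. The Gamma-quotient step you single out as the main obstacle is routine: $\Gamma_{\overline{q}/k}/\Gamma_{-s\overline{q}}$ is a moment function of order $\overline{q}(s+1/k)$ by Proposition \ref{pr:moments}, so Proposition \ref{pr:order} (or the Stirling formula the paper cites) collapses it to $C^j\Gamma_{\overline{q}(s+1/k)}(j)$ irrespective of the sign of $s+1/k$.
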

\begin{proof}
 Repeating the proof of \cite[Lemma 1]{Mic7}, we may take $r>0$ and $\varepsilon_r>0$ such that
 \[
 \sup_{|z|<r}|\lambda^j(\partial_{m,z})\varphi(z)|\leq |\lambda|^j A_1 B_1^j
 \frac{\Gamma_{\overline{q}/k}(j)}{\varepsilon^{j\overline{q}}}
 \frac{1}{2\kappa\pi\varepsilon}\oint_{|w|=\varepsilon}^{\kappa}|\varphi(w)|\,d|w|
 \]
 for some $A_1,B_1 <\infty$ and for every $\varepsilon > \varepsilon_r$ such that 
 $D_{\varepsilon}\Subset D$ and $\varphi\in\Oo_{1/\kappa}(D)$.
 \par
 If $s=0$ then the assertion is given by the estimation
 \[
  \frac{1}{2\kappa\pi\varepsilon}\oint_{|w|=\varepsilon}^{\kappa}|\varphi(w)|\,d|w|
  \leq A_2.
 \]
 \par
 If $s<0$ then $\varphi\in\Oo^{-1/s}_{1/\kappa}(\CC)$. So we estimate
 \begin{gather*}
  \frac{1}{2\kappa\pi\varepsilon}\oint_{|w|=\varepsilon}^{\kappa}|\varphi(w)|\,d|w|
  \leq A_2e^{B_2\varepsilon^{-1/s}} \quad \textrm{for every} \quad \varepsilon > \varepsilon_r.
 \end{gather*}
 Hence, putting $\varepsilon=(\frac{-sj\overline{q}}{B_2})^{-s}$  and applying the Stirling formula
 (see \cite[Theorem 68]{B2}) we conclude that
 \[
   \sup_{|z|<r}|\lambda^j(\partial_{m,z})\varphi(z)|\leq 
   \frac{|\lambda|^j\widetilde{A}\widetilde{B}^j\Gamma_{\overline{q}/k}(j)e^{-sj\overline{q}}}{(-sj\overline{q})^{-sj\overline{q}}}\leq |\lambda|^j A B^j \Gamma_{\overline{q}(s+1/k)}(j).
 \]
\end{proof}

\section{Formal solutions and Gevrey estimates}
   In this section we study the formal solutions of the initial value problem for a general linear moment partial
   differential equation with constant coefficients
   \begin{equation}
    \label{eq:general}
    \left\{
    \begin{array}{ll}
     P(\partial_{m_1,t},\partial_{m_2,z})\widehat{u}=0&\\
     \partial^j_{m_1,t} \widehat{u}(0,z)=\widehat{\varphi}_j(z)\in\CC[[z]]
     & (j=0,\dots,n-1),
    \end{array}
    \right.
   \end{equation}
   where $m_1$, $m_2$ are moment functions of orders $s_1,s_2\in\RR$ respectively, and
    \begin{equation}
     \label{eq:polynomial}
     P(\lambda,\zeta)=P_0(\zeta)\lambda^n-\sum_{j=1}^nP_j(\zeta)\lambda^{n-j}
    \end{equation}
    is a general polynomial of two variables, which is of order $n$ with respect to $\lambda$.
    
    First, we will show the following
    \begin{Prop}
     \label{pr:formal}
     Let $m'_1$ and $m'_2$ be moment functions, $\widehat{u}\in\CC[[t,z]]$ and
     $\widehat{v}=\Bo_{m'_1,t}\Bo_{m'_2,z}\widehat{u}$. Then
     $\widehat{u}$ is a formal solution of (\ref{eq:general})
     if and only if  $\widehat{v}$ is a formal solution of
    \begin{equation}
    \label{eq:general_2}
    \left\{
    \begin{array}{lll}
     P(\partial_{\overline{m}_1,t},\partial_{\overline{m}_2,z})\widehat{v}=0&&\\
     \partial^j_{\overline{m}_1,t} \widehat{v}(0,z)=\widehat{\psi}_j(z):=\Bo_{m'_2,z}\widehat{\varphi}_j(z)\in\CC[[z]]
     & \textrm{for} & j=0,\dots,n-1,
    \end{array}
    \right.
   \end{equation}
    where $\overline{m}_1:=m_1m_1'$ and $\overline{m}_2:=m_2m_2'$.
  \end{Prop}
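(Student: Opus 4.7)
The plan is to reduce everything to the one-variable commutation relation (Proposition \ref{pr:commutation}) combined with a direct coefficient computation for the initial data. Since the Borel transforms $\Bo_{m'_1,t}$ and $\Bo_{m'_2,z}$ act on disjoint variables of $\CC[[t,z]]$, they commute with each other, and each one is invertible (with inverse $\Bo_{1/m'_i,\cdot}$ by Proposition \ref{pr:properties}). Hence it suffices to prove the ``only if'' direction; the converse follows by applying $\Bo_{1/m'_1,t}\Bo_{1/m'_2,z}$.

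For the PDE itself, I would expand $P(\lambda,\zeta)=\sum_{i,j}c_{ij}\lambda^i\zeta^j$ as a polynomial in two variables, so that $P(\partial_{m_1,t},\partial_{m_2,z})\widehat{u}=\sum_{i,j}c_{ij}\partial^i_{m_1,t}\partial^j_{m_2,z}\widehat{u}$. Applying Proposition \ref{pr:commutation}(ii) separately in the variable $t$ (with the polynomial $\lambda^i$) and then in $z$ (with $\zeta^j$), I obtain, for each monomial,
\[
\Bo_{m'_1,t}\Bo_{m'_2,z}\bigl(\partial^i_{m_1,t}\partial^j_{m_2,z}\widehat{u}\bigr)
=\partial^i_{\overline{m}_1,t}\partial^j_{\overline{m}_2,z}\bigl(\Bo_{m'_1,t}\Bo_{m'_2,z}\widehat{u}\bigr)
=\partial^i_{\overline{m}_1,t}\partial^j_{\overline{m}_2,z}\widehat{v}.
\]
Summing over $i,j$ gives $\Bo_{m'_1,t}\Bo_{m'_2,z}P(\partial_{m_1,t},\partial_{m_2,z})\widehat{u}=P(\partial_{\overline{m}_1,t},\partial_{\overline{m}_2,z})\widehat{v}$, so $P(\partial_{m_1,t},\partial_{m_2,z})\widehat{u}=0$ is equivalent to $P(\partial_{\overline{m}_1,t},\partial_{\overline{m}_2,z})\widehat{v}=0$.

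For the initial data, I would just compute in coefficients. Writing $\widehat{u}(t,z)=\sum_{j,n\geq 0}a_{jn}t^jz^n$, Definition \ref{df:moment_diff} implies
\[
\partial^k_{m_1,t}\widehat{u}(0,z)=m_1(k)\sum_{n\geq 0}a_{kn}z^n,
\]
so the hypothesis $\partial^k_{m_1,t}\widehat{u}(0,z)=\widehat{\varphi}_k(z)$ is equivalent to $\widehat{\varphi}_k(z)=m_1(k)\sum_n a_{kn}z^n$. On the other hand $\widehat{v}(t,z)=\sum_{j,n}\frac{a_{jn}}{m'_1(j)m'_2(n)}t^jz^n$, hence
\[
\partial^k_{\overline{m}_1,t}\widehat{v}(0,z)=\overline{m}_1(k)\sum_{n\geq 0}\frac{a_{kn}}{m'_1(k)m'_2(n)}z^n
=m_1(k)\sum_{n\geq 0}\frac{a_{kn}}{m'_2(n)}z^n=\Bo_{m'_2,z}\widehat{\varphi}_k(z)=\widehat{\psi}_k(z),
\]
using $\overline{m}_1(k)=m_1(k)m'_1(k)$.

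The argument is entirely bookkeeping; there is no genuine obstacle, as the moment Borel transforms have been designed so that Proposition \ref{pr:commutation} handles precisely this situation. The only point requiring a moment of care is verifying that the two Borel transforms in different variables can be interchanged with one another and with the moment differentiations in the \emph{other} variable; both facts are immediate from the coefficient-wise definitions since the operators act on independent tensor factors.
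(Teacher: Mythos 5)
Your argument is correct and is essentially the paper's own proof: the equation part is exactly Proposition \ref{pr:commutation} (the paper applies it directly to $P(\partial_{m_1,t},\partial_{m_2,z})$ rather than monomial by monomial, and treats the initial data by the same commutation instead of your coefficient computation), and the converse is obtained, just as you do, from invertibility of the moment Borel transforms via $\Bo_{1/m'_1,t}\Bo_{1/m'_2,z}$. One pedantic remark: by Definition \ref{df:moment_diff} the correct evaluation is $\partial^k_{m_1,t}\widehat{u}(0,z)=\frac{m_1(k)}{m_1(0)}\sum_n a_{kn}z^n$, so your displayed formulas omit the factors $1/m_1(0)$ and $1/\overline{m}_1(0)$; carrying them through leaves a residual constant $1/m'_1(0)$, i.e.\ the identity $\partial^k_{\overline{m}_1,t}\widehat{v}(0,z)=\Bo_{m'_2,z}\widehat{\varphi}_k(z)$ tacitly uses the normalization $m'_1(0)=1$ (satisfied by $\Gamma_s$, which is all the paper ever substitutes for $m'_1$), and the paper's own proof makes the same silent assumption, so this is not a gap relative to it.
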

  \begin{proof}
   ($\Longrightarrow$)
   We assume that $\widehat{u}$ is a formal solution of (\ref{eq:general}). By Proposition \ref{pr:commutation} we have
   \begin{align*}
   P(\partial_{\overline{m}_1,t},\partial_{\overline{m}_2,z})\widehat{v}&=P(\partial_{\overline{m}_1,t},\partial_{\overline{m}_2,z})\Bo_{m'_1,t}\Bo_{m'_2,z}\widehat{u}\\
    &=\Bo_{m'_1,t}\Bo_{m'_2,z}P(\partial_{m_1,t},\partial_{m_2,z})\widehat{u}=0
   \end{align*}
   and
   \begin{align*}
    \partial^j_{\overline{m}_1,t} \widehat{v}(0,z)
    &= \partial^j_{\overline{m}_1,t}\Bo_{m'_1,t}\Bo_{m'_2,z}\widehat{u}(0,z)
    =\Bo_{m'_1,t}\Bo_{m'_2,z}\partial^j_{m_1,t}\widehat{u}(0,z)\\
    &=\Bo_{m'_2,z}\widehat{\varphi}_j(z)
   \end{align*}
   for $j=0,\dots,n-1$.
   So $\widehat{v}$ is a formal solution of (\ref{eq:general_2}).
   \par
   ($\Longleftarrow$) Observe that $\widehat{u}=\Bo_{1/m_1',t}\Bo_{1/m_2',z}\widehat{v}$ and $\widehat{\varphi}_j=\Bo_{1/m_2',z}\widehat{\psi}_j$
   for $j=0,\dots,n-1$. Repeating the first part of the proof with $\widehat{u}$ replaced by $\widehat{v}$ and
   $\widehat{\varphi}_j$ replaced by $\widehat{\psi}_j$, we obtain the assertion.
  \end{proof}
    If $P_0(\zeta)$ defined by (\ref{eq:polynomial}) is not a constant, then a formal solution of (\ref{eq:general})
    is not uniquely determined. To avoid this inconvenience we choose some special solution which is already 
    uniquely determined. To this end we factorise the moment differential operator
   $P(\partial_{m_1,t},\partial_{m_2,z})$ as follows
   \begin{align*}
    P(\partial_{m_1,t},\partial_{m_2,z})
    &=P_0(\partial_{m_2,z})(\partial_{m_1,t}-\lambda_1(\partial_{m_2,z}))^{n_1}\cdots
    (\partial_{m_1,t}-\lambda_l(\partial_{m_2,z}))^{n_l}\\
    &=:P_0(\partial_{m_2,z})\widetilde{P}(\partial_{m_1,t},\partial_{m_2,z}),
    \end{align*}
   where $\lambda_1(\zeta),\dots,\lambda_l(\zeta)$ are the roots of the characteristic equation
   \linebreak
   $P(\lambda,\zeta)=0$ with multiplicity
   $n_1,\dots,n_l$ ($n_1+\cdots+n_l=n$) respectively.
   \par
     Since $\lambda_{\alpha}(\zeta)$ are algebraic functions,
     we may assume that there exist $\kappa\in\NN$ and $r_0<\infty$ such that
   $\lambda_{\alpha}(\zeta)$ are holomorphic functions of the variable $\xi=\zeta^{1/\kappa}$
   (for $|\zeta|\geq r_0$) and, moreover, there exist $\lambda_{\alpha}\in\CC\setminus\{0\}$ and 
   $q_{\alpha}=\mu_{\alpha}/\nu_{\alpha}$
   (for some relatively prime numbers $\mu_{\alpha}\in\ZZ$ and $\nu_{\alpha}\in\NN$) such that
   $\lambda_{\alpha}(\zeta)\sim\lambda_{\alpha}\zeta^{q_{\alpha}}$ for $\alpha=1,\dots,l$.
   Hence the moment pseudodifferential operators $\lambda_{\alpha}(\partial_{m_2,z})$ are well-defined.
   \par
   Under the above assumption, by a \emph{normalised formal solution} $\widehat{u}$ of (\ref{eq:general}) we mean such solution
   of (\ref{eq:general}), which is also a solution of the pseudodifferential equation
   $\widetilde{P}(\partial_{m_1,t},\partial_{m_2,z})\widehat{u}=0$ (see \cite[Definition 10]{Mic7}).

  Now we are ready to study the Gevrey order of formal solution $\widehat{u}$ of (\ref{eq:general}),
  which depends on the orders $s_1,s_2\in\RR$ of the moment functions $m_1$, $m_2$ respectively, on the Gevrey
  order $s\in\RR$ of the initial data $\widehat{\varphi}$ and depends on the pole orders $q_{\alpha}\in\QQ$ of
  the roots $\lambda_{\alpha}(\zeta)$ ($\alpha=1,\dots,l$).
  We generalise the results for the analytic Cauchy data given in \cite[Theorems 1 and 2]{Mic7}
  as follows
\begin{Th}
  \label{th:gevrey}
  Let $s\in\RR$ and let $\widehat{u}$ be a normalised formal solution of
   (\ref{eq:general}) with $\widehat{\varphi}_j\in\CC[[z]]_s$
   ($j=0,...,n-1$) then $\widehat{u}=\sum_{\alpha=1}^l\sum_{\beta=1}^{n_{\alpha}}\widehat{u}_{\alpha\beta}$ with
   $\widehat{u}_{\alpha\beta}$ being a formal solution of simple pseudodifferential equation
   \begin{equation}
   \label{eq:gevrey}
    \left\{
    \begin{array}{l}
     (\partial_{m_1,t}-\lambda_{\alpha}(\partial_{m_2,z}))^{\beta} \widehat{u}_{\alpha\beta}=0\\
     \partial_{m_1,t}^j \widehat{u}_{\alpha\beta}(0,z)=0\ \ (j=0,\dots,\beta-2)\\
     \partial_{m_1,t}^{\beta-1} \widehat{u}_{\alpha\beta}(0,z)=\lambda_{\alpha}^{\beta-1}(\partial_{m_2,z})
     \widehat{\varphi}_{\alpha\beta}(z),
    \end{array}
    \right.
   \end{equation}
   where $\widehat{\varphi}_{\alpha\beta}(z):=\sum_{j=0}^{n-1}d_{\alpha\beta j}(\partial_{m_2,z})
   \widehat{\varphi}_j(z)\in\CC[[z^{\frac{1}{\kappa}}]]_s$ and $d_{\alpha\beta j}(\zeta)$ are some holomorphic
   functions of the variable $\xi=\zeta^{1/\kappa}$ and of polynomial growth.
   \par
   Moreover, if $q_{\alpha}$ is a pole order of $\lambda_{\alpha}(\zeta)$
   and $\overline{q}_{\alpha}=\max\{0,q_{\alpha}\}$,
   then a formal solution $\widehat{u}_{\alpha\beta}$ is a Gevrey series of order $\overline{q}_{\alpha}(s_2+s) - s_1$
   with respect to $t$. More precisely,
   $\widehat{u}_{\alpha\beta}\in\CC[[t,z^{\frac{1}{\kappa}}]]_{\overline{q}_{\alpha}(s_2+s) - s_1,s}$
   or, equivalently, $\widehat{u}_{\alpha\beta}\in G_{s,1/\kappa}[[t]]_{\overline{q}_{\alpha}(s_2+s) - s_1}$.
\end{Th}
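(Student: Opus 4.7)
The plan is to decompose $\widehat{u}$ via partial fractions of the characteristic polynomial (extending the strategy of \cite[Theorems 1--2]{Mic7} from analytic to divergent data), to reduce the divergent-data situation to a convergent one by an auxiliary moment Borel transform in $z$, and then to combine Lemma \ref{le:estimation} with the asymptotic growth of $m_1$ from Proposition \ref{pr:order} to read off the Gevrey order in $t$.

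\textbf{Step 1: Partial-fraction decomposition.} I perform the expansion
\[
\frac{\lambda^j}{\widetilde{P}(\lambda,\zeta)}=\sum_{\alpha=1}^{l}\sum_{\beta=1}^{n_\alpha}\frac{d_{\alpha\beta j}(\zeta)}{(\lambda-\lambda_\alpha(\zeta))^\beta}\qquad(j=0,\dots,n-1),
\]
whose coefficients $d_{\alpha\beta j}(\zeta)$ are rational expressions in $\lambda_1(\zeta),\dots,\lambda_l(\zeta)$, hence holomorphic in $\xi=\zeta^{1/\kappa}$ for $|\zeta|\geq r_0$ and of polynomial growth at infinity. Defining $\widehat{\varphi}_{\alpha\beta}(z):=\sum_{j=0}^{n-1}d_{\alpha\beta j}(\partial_{m_2,z})\widehat{\varphi}_j(z)$ and letting $\widehat{u}_{\alpha\beta}$ be the unique formal solution of (\ref{eq:gevrey}), each $\widehat{u}_{\alpha\beta}$ automatically lies in the kernel of $\widetilde{P}(\partial_{m_1,t},\partial_{m_2,z})$. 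A direct comparison of $\partial_{m_1,t}^j|_{t=0}$ on both sides of the partial-fraction identity (with $\lambda$ replaced by $\partial_{m_1,t}$ and $\zeta$ by $\partial_{m_2,z}$) then shows that $\sum_{\alpha,\beta}\widehat{u}_{\alpha\beta}$ realises exactly the Cauchy data $\widehat{\varphi}_0,\dots,\widehat{\varphi}_{n-1}$, so uniqueness of the normalised formal solution gives $\widehat{u}=\sum_{\alpha,\beta}\widehat{u}_{\alpha\beta}$.

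\textbf{Step 2: Reduction to analytic data.} Since Lemma \ref{le:estimation} requires Gevrey index $\leq 0$, I apply the moment Borel transform $\Bo_{\Gamma_s,z}$ to (\ref{eq:gevrey}). By Propositions \ref{pr:formal} and \ref{pr:commutation2} the series $\widehat{v}_{\alpha\beta}:=\Bo_{\Gamma_s,z}\widehat{u}_{\alpha\beta}$ satisfies the analogous Cauchy problem with $m_2$ replaced by $\overline{m}_2:=m_2\Gamma_s$ (of order $s_2+s$) and with datum $\Bo_{\Gamma_s,z}\widehat{\varphi}_{\alpha\beta}\in\Oo_{1/\kappa}(D)$. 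By the equivalences in Proposition \ref{pr:prop2}, it suffices to show $\widehat{v}_{\alpha\beta}\in G_{0,1/\kappa}[[t]]_{\overline{q}_\alpha(s_2+s)-s_1}$ for some radius $r>0$.

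\textbf{Step 3: Gevrey estimate and obstacle.} Expanding $\widehat{v}_{\alpha\beta}(t,z)=\sum_{k\geq 0}V_k(z)\,t^k/m_1(k)$ with $V_k=\partial_{m_1,t}^k\widehat{v}_{\alpha\beta}(0,\cdot)$, a standard resolution of $(\partial_{m_1,t}-\lambda_\alpha(\partial_{\overline{m}_2,z}))^\beta\widehat{v}_{\alpha\beta}=0$ with the prescribed initial conditions writes $V_k(z)$ as a finite linear combination of iterates $\lambda_\alpha^j(\partial_{\overline{m}_2,z})\Bo_{\Gamma_s,z}\widehat{\varphi}_{\alpha\beta}(z)$ for $j\leq k+\beta-1$. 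Applying Lemma \ref{le:estimation} to this analytic datum with moment function $\overline{m}_2$ of order $s_2+s$ and symbol $\lambda_\alpha(\zeta)\sim\lambda_\alpha\zeta^{q_\alpha}$ bounds these iterates uniformly for $|z|<r$ by $A_0B_0^j\Gamma_{\overline{q}_\alpha(s_2+s)}(j)$; combining with $m_1(k)\leq C^k\Gamma_{s_1}(k)$ from Proposition \ref{pr:order} and standard asymptotics for ratios of Gamma functions yields $\|V_k\|_{G_{0,1/\kappa}(r)}\leq AB^k\,m_1(k)\,\Gamma_{\overline{q}_\alpha(s_2+s)-s_1}(k)$, which is exactly the required Gevrey estimate. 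The main obstacle is the uniform treatment of arbitrary $s\in\RR$: if $s_2+s\leq 0$ then $\overline{m}_2$ has non-positive order and Lemma \ref{le:estimation} does not apply directly, so a further auxiliary Borel transform in $z$ is needed to restore positive order of the underlying moment function before the estimate can be invoked. A secondary technicality is that the pseudodifferential operators $\lambda_\alpha(\partial_{m_2,z})$ and $d_{\alpha\beta j}(\partial_{m_2,z})$ must be interpreted on divergent data in $\CC[[z]]_s$, which is precisely the content of Definition \ref{df:pseudo}.
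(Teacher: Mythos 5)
Your proposal is correct and follows essentially the same route as the paper: a moment Borel transform in $z$ reducing to convergent data, the decomposition machinery of \cite[Theorem 1]{Mic7} (interpreted on divergent data through Definition \ref{df:pseudo}), the coefficient estimate via Lemma \ref{le:estimation}, and the return to $\widehat{u}_{\alpha\beta}$ via Proposition \ref{pr:prop2}. The differences are cosmetic: the paper applies a single transform $\Bo_{\Gamma_{\overline{s}},z}$ with $\overline{s}>\max\{s,-s_2\}$ chosen upfront, so the data become Gevrey of negative order and $m_2\Gamma_{\overline{s}}$ has positive order simultaneously, which removes the degenerate case $s_2+s\leq 0$ that you patch with a second transform (the net exponent $\overline{q}_{\alpha}(s_2+s)$ is unaffected, so your fix works); also, when dividing by $m_1(k)$ you in fact need the lower bound $m_1(k)\geq c^k\Gamma_{s_1}(k)$ from Proposition \ref{pr:order}, not the upper bound you cite, though both are available there.
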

\begin{proof}
 For fixed $\overline{s}>\max\{s,-s_2\}$ we define $\widehat{v}:=\Bo_{\Gamma_{\overline{s}},z}\widehat{u}$.
 By Proposition \ref{pr:formal}, $\widehat{v}$ is a formal solution of
\[
    \left\{
    \begin{array}{lll}
     P(\partial_{m_1,t},\partial_{\overline{m}_2,z})\widehat{v}=0&&\\
     \partial^j_{m_1,t} \widehat{v}(0,z)=\widehat{\psi}_j(z)=\Bo_{\Gamma_{\overline{s}},z}\widehat{\varphi}_j(z)
     \in\CC[[z]]_{s-\overline{s}}
     & \textrm{for} & j=0,\dots,n-1,
    \end{array}
    \right.
\]
where $\overline{m}_2:=m_2\Gamma_{\overline{s}}$.
Since $\overline{m}_2$ is a moment function of order $\overline{s}+s_2>0$ and $\widehat{\psi}_j$
are the Gevrey series of order $s-\overline{s}<0$ for $j=0,\dots,n-1$, repeating the proof of
\cite[Theorem 1]{Mic7} we conclude that
 $\widehat{v}=\sum_{\alpha=1}^l\sum_{\beta=1}^{n_{\alpha}}\widehat{v}_{\alpha\beta}$ with $\widehat{v}_{\alpha\beta}$ being
 a formal solution of
\[
    \left\{
    \begin{array}{l}
     (\partial_{m_1,t}-\lambda_{\alpha}(\partial_{\overline{m}_2,z}))^{\beta} \widehat{v}_{\alpha\beta}=0\\
     \partial_{m_1,t}^j \widehat{v}_{\alpha\beta}(0,z)=0\ \ (j=0,\dots,\beta-2)\\
     \partial_{m_1,t}^{\beta-1} \widehat{v}_{\alpha\beta}(0,z)=\lambda_{\alpha}^{\beta-1}(\partial_{\overline{m}_2,z})
     \widehat{\psi}_{\alpha\beta}(z),
    \end{array}
    \right.
\]
where $\widehat{\psi}_{\alpha\beta}(z):=\sum_{j=0}^{n-1}d_{\alpha\beta j}(\partial_{\overline{m}_2,z})
\widehat{\psi}_j(z)\in\CC[[z^{\frac{1}{\kappa}}]]_{s-\overline{s}}$ and $d_{\alpha\beta j}(\zeta)$ are some 
holomorphic functions of the variable $\xi=\zeta^{1/\kappa}$ and of polynomial growth. Hence, by Definition \ref{df:pseudo_1},  
$\widehat{u}=\sum_{\alpha=1}^l\sum_{\beta=1}^{n_{\alpha}}\widehat{u}_{\alpha\beta}$,
where $\widehat{u}_{\alpha\beta}=\Bo_{\Gamma_{-\overline{s}},z^{1/\kappa}}\widehat{v}_{\alpha\beta}$ satisfies (\ref{eq:gevrey})
with
\[ 
 \widehat{\varphi}_{\alpha\beta}=\Bo_{\Gamma_{-\overline{s}},z^{1/\kappa}}\widehat{\psi}_{\alpha\beta}=
 \Bo_{\Gamma_{-\overline{s}},z^{1/\kappa}}\sum_{j=0}^{n-1}d_{\alpha\beta j}(\partial_{\overline{m}_2,z})
\widehat{\psi}_j(z)=\sum_{j=0}^{n-1}d_{\alpha\beta j}(\partial_{m_2,z})
   \widehat{\varphi}_j(z)
\]
for $\beta=1,\dots,n_{\alpha}$ and $\alpha=1,\dots,l$.
\par
To find the Gevrey order of $\widehat{v}_{\alpha\beta}=\sum_{j=0}^{\infty}v_{\alpha\beta j}(z)t^j$ with respect to $t$, observe that by \cite[Lemma 2]{Mic7} and by Lemma \ref{le:estimation} there exists $r>0$ such that
\begin{align*}
\sup_{|z|<r}|v_{\alpha\beta j}(z)|&=m_1(0){j\choose \beta-1} \frac{\sup_{|z|<r}|\lambda_{\alpha\beta}^{j}(\partial_{\overline{m}_2,z})\psi_{\alpha\beta}(z)|}{m_1(j)}\\
&\leq
\widetilde{A}\widetilde{B}^j\frac{\Gamma_{\overline{q}_{\alpha}(s-\overline{s}+\overline{s}+s_2)}(j)}{\Gamma_{s_1}(j)}
\leq A B^j \Gamma_{\overline{q}_{\alpha}(s+s_2)-s_1}(j)
\end{align*}
for some $A,B<\infty$. It means that $\widehat{v}_{\alpha\beta}\in G_{s-\overline{s},1/\kappa}[[t]]_{\overline{q}_{\alpha}(s_2+s)-s_1}$.
Finally, by Proposition \ref{pr:prop2} we conclude that
$\widehat{u}_{\alpha\beta}=\Bo_{\Gamma_{-\overline{s}},z^{1/\kappa}}\widehat{v}_{\alpha\beta}\in
G_{s,1/\kappa}[[t]]_{\overline{q}_{\alpha}(s_2+s) - s_1}$ 
or, equivalently, $\widehat{u}_{\alpha\beta}\in\CC[[t,z^{\frac{1}{\kappa}}]]_{\overline{q}_{\alpha}(s_2+s)-s_1,s}$.
 \end{proof}

 \section{Analytic solutions}
In this section we study the analytic continuation properties of the sum of convergent formal power
series solutions of
\begin{equation}
   \label{eq:formal0}
    \left\{
    \begin{array}{l}
     (\partial_{m_1,t}-\lambda(\partial_{m_2,z}))^{\beta}v=0\\
     \partial_{m_1,t}^j v(0,z)=0\ \ (j=0,\dots,\beta-2)\\
     \partial_{m_1,t}^{\beta-1} v(0,z)=\lambda^{\beta-1}(\partial_{m_2,z})\varphi(z)\in\Oo_{1/\kappa}(D),
    \end{array}
    \right.
   \end{equation}
where $\lambda(\zeta)$ is a root of the characteristic equation of (\ref{eq:general}). It means that $\lambda(\zeta)$ is an analytic function of the variable $\xi=\zeta^{1/\kappa}$ for $|\zeta|\geq r_0$ and
$\lambda(\zeta)\sim \lambda\zeta^q$. During this section we assume that $m_1$ and $m_2$ are moment functions 
of orders $1/k_1,1/k_2>0$, respectively.

Repeating the proof of \cite[Lemma 4]{Mic7} we get the following representation of solution $v$ of 
(\ref{eq:formal0})
\begin{Lem}
 \label{le:integral0}
 Let $v$ be a solution of (\ref{eq:formal0})  and $1/k_1 \geq q/k_2$.
  Then $v$ belongs to the space $\Oo_{1,1/\kappa}(D^2)$ and is given by
  \[
   v(t,z)=\frac{t^{\beta-1}}{(\beta-1)!}\partial_t^{\beta-1}
   \frac{m_1(0)}{2\kappa\pi i}\oint_{|w|=\varepsilon}^{\kappa}\varphi(w)\int_{r_0e^{i\theta}}^{\infty(\theta)}E_{m_1}(t\lambda(\zeta))
    E_{\widetilde{m}_2}(\zeta^{1/\kappa} z^{1/\kappa})\frac{e_{m_2}(\zeta w)}{\zeta w}\,d\zeta\,dw,
  \]
  where $\theta\in (-\arg w-\frac{\pi}{2k_2}, -\arg w + \frac{\pi}{2k_2})$ and
  $\widetilde{m}_2(u)=m_2(u/\kappa)$.
\end{Lem}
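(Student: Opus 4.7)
The plan is to follow the pattern of \cite[Lemma 4]{Mic7}: first construct the unique formal series solution of (\ref{eq:formal0}), then substitute the integral representation of $\lambda^{j}(\partial_{m_2,z})\varphi(z)$ from Definition \ref{df:pseudo_1}, and finally interchange summation with integration in order to recognise the kernel function $E_{m_1}$. As a preliminary step I would reduce to the case $\beta=1$. Writing a formal solution as $v(t,z)=\sum_{j\geq 0}\frac{u_j(z)}{m_1(j)}t^j$ and expanding $(\partial_{m_1,t}-\lambda(\partial_{m_2,z}))^{\beta}v=0$ by means of Definition \ref{df:moment_diff} leads to the recursion
\[
\sum_{k=0}^{\beta}\binom{\beta}{k}(-1)^{\beta-k}\lambda^{\beta-k}(\partial_{m_2,z})u_{j+k}=0,\qquad j\geq 0,
\]
with initial data $u_0=\dots=u_{\beta-2}=0$ and $u_{\beta-1}=m_1(0)\lambda^{\beta-1}(\partial_{m_2,z})\varphi$. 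Because the $\beta$-th forward difference of any polynomial of degree less than $\beta$ vanishes, the ansatz $u_j=m_1(0)\binom{j}{\beta-1}\lambda^{j}(\partial_{m_2,z})\varphi$ satisfies this recursion, and comparing coefficients shows that $v=\frac{t^{\beta-1}}{(\beta-1)!}\partial_t^{\beta-1}w$, where $w$ is the $\beta=1$ formal solution
\[
w(t,z)=m_1(0)\sum_{j=0}^{\infty}\frac{\lambda^{j}(\partial_{m_2,z})\varphi(z)}{m_1(j)}t^j.
\]

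Next I would substitute the integral representation of $\lambda^j(\partial_{m_2,z})\varphi(z)$ from Definition \ref{df:pseudo_1} into the formula for $w$ and interchange the sum with the two integrals; the inner series then collapses to $\sum_{j\geq 0}\frac{(t\lambda(\zeta))^j}{m_1(j)}=E_{m_1}(t\lambda(\zeta))$, and applying $\frac{t^{\beta-1}}{(\beta-1)!}\partial_t^{\beta-1}$ produces the claimed formula.

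The main obstacle, and the place where the hypothesis $1/k_1\geq q/k_2$ enters, is to justify this interchange and, simultaneously, to verify that the resulting double integral defines an element of $\Oo_{1,1/\kappa}(D^2)$. To this end I would parametrise the ray as $\zeta=re^{i\theta}$ with $r\geq r_0$ and estimate termwise using $|\lambda(\zeta)|\leq C|\zeta|^q$ for $|\zeta|\geq r_0$, the growth $|E_{\widetilde{m}_2}(\zeta^{1/\kappa}z^{1/\kappa})|\leq Ce^{c(|\zeta||z|)^{k_2}}$, and the exponential flatness $|e_{m_2}(\zeta w)|\leq Ce^{-(|\zeta w|/B)^{k_2}}$. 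Choosing $|z|$ small enough compared to $\varepsilon=|w|$, the factor $e_{m_2}(\zeta w)$ yields a net $k_2$-Gaussian decay in $|\zeta|$; the remaining Mittag--Leffler-type factor $E_{m_1}(t\lambda(\zeta))$ behaves like $\exp(c|t|^{k_1}|\zeta|^{qk_1})$, and the inequality $qk_1\leq k_2$ guarantees that this growth is absorbed for $|t|$ in a small disc. Uniform convergence on compact subsets then legitimises exchanging $\sum$ and $\int$, gives holomorphy of the resulting function in $t$ and in $z^{1/\kappa}$ near the origin, and thus produces the desired representation.
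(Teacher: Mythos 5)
Your proposal is correct and follows essentially the same route as the paper, which proves this lemma simply by invoking the argument of \cite[Lemma 4]{Mic7}: determine the unique formal solution with coefficients $m_1(0)\binom{j}{\beta-1}\lambda^{j}(\partial_{m_2,z})\varphi/m_1(j)$, insert the kernel representation of $\lambda^{j}(\partial_{m_2,z})\varphi$ from Definition \ref{df:pseudo_1}, and resum to $E_{m_1}(t\lambda(\zeta))$, the interchange being justified by the kernel estimates together with $qk_1\le k_2$. Your treatment of the recursion via vanishing $\beta$-th differences and the absorption of the $E_{m_1}$ growth by the flatness of $e_{m_2}$ is exactly the intended argument, so nothing essential is missing.
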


We generalise \cite[Lemma 5]{Mic7} as follows 
 \begin{Lem}
 \label{le:integral}
 Let $\lambda(\zeta)\sim\lambda\zeta^{q}$ be a root of the characteristic equation of (\ref{eq:general})
 for $q=\mu/\nu$ with relatively prime numbers $\mu,\nu\in\NN$, where 
 $\lambda(\zeta)$ is an analytic
 function of the variable $\xi=\zeta^{1/\kappa}$ for $|\zeta|\geq r_0$
(for some $r_0>0$).
 Moreover, let $1/k_1 = q/k_2$, $K>0$ and $d\in\RR$. We assume that $v$ is a solution of
 \[
    \left\{
    \begin{array}{l}
     (\partial_{m_1,t}-\lambda(\partial_{m_2,z}))^{\beta} v=0\\
     \partial_{m_1,t}^j v(0,z)=\varphi_j(z)\in\Oo_{1/\kappa}(D)\ \ (j=0,\dots,\beta-1).
    \end{array}
    \right.
 \]
If $\varphi_j\in\Oo^{qK}_{1/\kappa}(\widehat{S}_{(d+\arg\lambda+2k\pi)/q})$ for $k=0,\dots,q\kappa-1$ and $j=0,\dots,\beta-1$, then
 $v\in\Oo^{K,qK}_{1,1/\kappa}(\widehat{S}_{d}\times \widehat{S}_{(d+\arg\lambda+2k\pi)/q})$
 for $k=0,\dots,q\kappa-1$.
 Moreover, if additionally $\varphi_j\in\Oo(D)$ for $j=0,\dots,\beta-1$, then
 $v\in\Oo^{K,qK}_{1,1/\kappa}(\widehat{S}_{d+2n\pi/\nu}\times \widehat{S}_{(d+\arg\lambda+2k\pi)/q})$
 for $k=0,\dots,q\kappa-1$ and $n=0,\dots,\nu-1$.
\end{Lem}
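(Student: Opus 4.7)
The plan is to adapt the integral representation of Lemma~\ref{le:integral0} to arbitrary (not just highest-order) initial data, and then to deform the contours of integration so as to exploit the sectorial growth hypotheses on the $\varphi_j$.

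By the linearity of the problem I would first decompose $v=\sum_{j=0}^{\beta-1}v_j$, where $v_j$ solves the same pseudodifferential equation with data $\partial_{m_1,t}^i v_j(0,z)=\delta_{ij}\varphi_j(z)$. For each $v_j$, a variant of the representation in Lemma~\ref{le:integral0} gives
\[
 v_j(t,z)=\frac{1}{2\kappa\pi i}\oint_{|w|=\varepsilon}^{\kappa}\varphi_j(w)\int_{r_0 e^{i\theta}}^{\infty(\theta)}
 K_j(t,\zeta)\,E_{\widetilde{m}_2}(\zeta^{1/\kappa}z^{1/\kappa})\,\frac{e_{m_2}(\zeta w)}{\zeta w}\,d\zeta\,dw,
\]
where $K_j(t,\zeta)$ is built out of $t^i$, $\lambda^{\ell}(\zeta)$ and $E_{m_1}(t\lambda(\zeta))$ via a Duhamel-type formula, and is analytic in both $t$ and $\zeta$.

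To extend $v$ to $\widehat{S}_d\times\widehat{S}_{(d+\arg\lambda+2k\pi)/q}$, I would deform, for each index $k$, the $\kappa$-fold loop in $w$ into a Hankel-type path along the ray $\arg w=(d+\arg\lambda+2k\pi)/q$ and simultaneously rotate the $\zeta$-ray to a direction $\theta=\theta_k$. This direction must satisfy three constraints: (i) $|\arg(w\zeta)|<\pi/(2k_2)$, so that $e_{m_2}(\zeta w)$ decays along the $\zeta$-ray; (ii) $|\arg z+\arg\zeta|>\pi/(2k_2)$, so that $E_{\widetilde{m}_2}(\zeta^{1/\kappa}z^{1/\kappa})$ remains polynomially bounded for $z\in\widehat{S}_{(d+\arg\lambda+2k\pi)/q}$; and (iii) $|\arg t+\arg\lambda+q\arg\zeta|>\pi/(2k_1)$, so that $E_{m_1}(t\lambda(\zeta))$ remains polynomially bounded for $t\in\widehat{S}_d$. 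The resonance $1/k_1=q/k_2$ makes these three angular constraints jointly compatible precisely along the prescribed family of sectors. Bounding $|\varphi_j(w)|\le A e^{B|w|^{qK}}$ on the new $w$-path via the hypothesis, together with the decay of $e_{m_2}$ and the polynomial behaviour of the Mittag-Leffler factors, yields the estimate $|v(t,z)|\le A'\exp(B_1'|t|^K)\exp(B_2'|z|^{qK})$.

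For the \emph{moreover} assertion, entirety of $\varphi_j\in\Oo(D)$ permits applying the substitution $\zeta\mapsto e^{2\pi in/\nu}\zeta$ in the inner integral, which multiplies the asymptotic $t\lambda(\zeta)\sim t\lambda\zeta^q$ by $e^{2\pi in\mu/\nu}$. As $n$ runs over $0,\dots,\nu-1$, this covers, thanks to $\gcd(\mu,\nu)=1$, all rotations by $2\pi n/\nu$, so the admissible $t$-direction shifts from $d$ to $d+2n\pi/\nu$ while leaving the $z$-sector unchanged. The main obstacle is the triple bookkeeping of the angles $\arg t,\arg z,\arg w$ at the resonance $1/k_1=q/k_2$, where the $\zeta$-integral is only conditionally convergent: the exponential rates of $E_{m_1}$, $E_{\widetilde{m}_2}$ and $e_{m_2}$ all agree, and convergence together with the desired growth bound rests on the cancellations of phases after the simultaneous rotation of the $w$- and $\zeta$-contours. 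Verifying that the resulting combinations of angles lie in the required half-planes for every $(t,z)$ in the claimed polysector is the combinatorial heart of the proof, and it is what produces the index set $k=0,\dots,q\kappa-1$ (and $n=0,\dots,\nu-1$ in the entire case).
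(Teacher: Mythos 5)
Your contour deformation is where the argument breaks down. First, the $\kappa$-fold circle $|w|=\varepsilon$ cannot be collapsed onto Hankel-type paths going to infinity: $\varphi_j$ is only assumed analytic on $D$ and on the disc-sectors $\widehat{S}_{(d+\arg\lambda+2k\pi)/q}$, so the arcs of the circle lying \emph{between} these sectors must remain in place. In the paper's proof they form a separate term $v_1$, controlled by choosing the $\zeta$-direction $\theta$ so that $E_{m_1}(t\lambda(\zeta))$ and $E_{\widetilde{m}_2}(\zeta^{1/\kappa}z^{1/\kappa})$ decay polynomially while $e_{m_2}(\zeta w)$ decays exponentially; such a $\theta$ exists only when $\arg w$ stays away from the singular directions $(d+\arg\lambda+2k\pi)/q$, which is precisely why the circle must be split into two families of arcs and why your claim that the three angular constraints are ``jointly compatible'' on the whole contour is false near those directions. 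Second, and more seriously, an infinite path along the ray need not converge and cannot give the stated bound: on it $|\varphi_j(w)|\le Ae^{B|w|^{qK}}$ with $K>0$ arbitrary, while the kernel decays at best like $e^{-c|w|^{k_2}}$, and the lemma assumes no relation between $qK$ and $k_2$; moreover a $(t,z)$-independent contour cannot produce the growth $e^{B_1'|t|^K+B_2'|z|^{qK}}$. The paper's essential quantitative device is missing from your proposal: the offending arcs are deformed only out to a \emph{finite} radius $R$, chosen comparable to $\max(|t|^{1/q},|z|)$, so that the outer circle lies in the region $|t|<a|w|^{q}$, $|z|<b|w|$ where the kernel is bounded, and the resulting factor $R\,e^{BR^{qK}}$ is exactly what becomes $\widetilde{A}e^{\widetilde{B}_1|t|^{K}+\widetilde{B}_2|z|^{qK}}$. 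Without this adapted truncation your final estimate does not follow.

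The \emph{moreover} part is also not justified as you state it. Rotating only the inner variable, $\zeta\mapsto e^{2\pi in/\nu}\zeta$, is not permitted in isolation: the admissible $\zeta$-directions are tied to $\arg w$ by the decay requirement on $e_{m_2}(\zeta w)$, so enlarging the set of $t$-directions forces a corresponding rotation of the $w$-contour, i.e.\ analyticity with growth of the data in new directions. That is exactly what the extra hypothesis provides, and not because $\varphi_j$ is entire: $\varphi_j\in\Oo(D)$ only means single-valuedness in $z$ (holomorphy in $z$ rather than $z^{1/\kappa}$), which makes the family of good directions for $\varphi_j$ invariant modulo $2\pi$, whence $\varphi_j\in\Oo^{qK}(\widehat{S}_{(d+2n\pi/\nu+\arg\lambda+2k\pi)/q})$, and one simply reruns the first part with $d$ replaced by $d+2n\pi/\nu$. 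Finally, the kernel estimates you invoke are those of Definition \ref{df:moment}, valid only for $k_1,k_2>1/2$; the paper removes this restriction afterwards via the substitution $t\mapsto t^{p}$, $z\mapsto z^{p}$ combined with Theorem \ref{th:gevrey}, a reduction your argument omits.
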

\begin{proof}
First, we consider the case $k_1, k_2>1/2$. By the principle of superposition of solutions of linear equations,
we may
assume that $v$ satisfies (\ref{eq:formal0}) with
$\varphi\in\Oo^{qK}_{1/\kappa}(\widehat{S}_{(d+\arg\lambda+2k\pi)/q}(\widetilde{\delta};\widetilde{r}))$ for
$k=0,\dots,q\kappa-1$ and for some $\widetilde{\delta},\widetilde{r}>0$.
Hence, by Lemma \ref{le:integral0}, the function $v\in\Oo_{1,1/\kappa}(D^2)$ has the integral representation 
 \begin{equation}
   \label{eq:v}
   v(t,z)=\frac{t^{\beta-1}}{(\beta-1)!}\partial_t^{\beta-1}
   \frac{m_1(0)}{2\kappa\pi i}\oint_{|w|=\varepsilon}^{\kappa}\varphi(w)k(t,z,w)\,dw,
  \end{equation}
  where $\varepsilon < \widetilde{r}$ and
  \[
   k(t,z,w):=\int_{r_0e^{i\theta}}^{\infty(\theta)}E_{m_1}(t\lambda(\zeta))
    E_{\widetilde{m}_2}(\zeta^{1/\kappa} z^{1/\kappa})\frac{e_{m_2}(\zeta w)}{\zeta w}\,d\zeta
  \]
  with $\theta\in (-\arg w-\frac{\pi}{2k_2}, -\arg w + \frac{\pi}{2k_2})$ and $\widetilde{m}_2(u)=m_2(u/\kappa)$.
 Now we consider the function
  \begin{gather}
  \label{eq:kernel}
  (t,z)\mapsto k(t,z,w) \quad \textrm{for every fixed} \quad w\in\CC\setminus\{0\}.
 \end{gather}
 Observe that by Definition \ref{df:moment} there exist constants $A_i$ and $b_i$ ($i=1,2,3$)
 such that
 $|E_{m_1}(t\lambda(\zeta))|\leq A_1 e^{b_1|t|^{k_1}|\zeta|^{k_1q}}$,
 $|E_{\widetilde{m}_2}(\zeta^{1/\kappa} z^{1/\kappa})|\leq A_2 e^{b_2|\zeta|^{k_2}|z|^{k_2}}$ and
 $|e_{m_2}(\zeta w)|\leq A_3 e^{-b_3|\zeta|^{k_2}|w|^{k_2}}$. 
 Hence, there exist $a,b>0$ such that for every fixed $w\in\CC\setminus\{0\}$ and
 for every $(t,z)\in\CC^2$ satisfying $|t|<a|w|^{q}$ and $|z|<b|w|$, we have
 \[
  |k(t,z,w)|\leq\int_{r_0}^{\infty}\widetilde{A}e^{s^{k_2}(b_1|t|^{k_1}+b_2|z|^{k_2}-b_3|w|^{k_2})}\,ds\leq
    \int_{r_0}^{\infty}\widetilde{A}e^{-\widetilde{b}s^{k_2}|w|^{k_2}}\,ds<\infty
 \]
with some positive constants $\widetilde{A},\widetilde{b}$. 
 Hence the function (\ref{eq:kernel}) belongs to the space
 $\Oo_{1,1/\kappa}(\{(t,z)\in\CC^2\colon |t|<a|w|^{q},\ |z|<b|w|\})$
 and
 the right-hand side of (\ref{eq:v}) is a well-defined
 holomorphic function of the variables $t$ and $\zeta=z^{1/\kappa}$ in a complex neighbourhood of the origin. 
 
 To show that $v\in\Oo_{1,1/\kappa}(\widehat{S}_{d}\times \widehat{S}_{(d+\arg\lambda+2k\pi)/q})$ for
 $k=1,\dots,q\kappa-1$,
 we deform the $\kappa$-fold circle $|w|=\varepsilon$ in the integral representation (\ref{eq:v}) of $v$ as in the proof of 
 \cite[Lemma 5]{Mic7}. Namely, we split these
circles into $2q\kappa$ arcs $\gamma_{2k}$ and $\gamma_{2k+1}$ ($k=0,\dots,q\kappa-1$), where
$\gamma_{2k}$ extends between points of argument
$(d+\arg\lambda+2k\pi)/q\pm\widetilde{\delta}/3$ and $\gamma_{2k+1}$ extends between
$(d+\arg\lambda+2k\pi)/q+\widetilde{\delta}/3$ and $(d+\arg\lambda+2(k+1)\pi)/q-\widetilde{\delta}/3
\mod 2q\kappa\pi $.
Finally, since $\varphi\in\Oo_{1/\kappa}(S_{(d+\arg \lambda+2k\pi)/q}(\widetilde{\delta}))$, we may
deform $\gamma_{2k}$ into a path $\gamma_{2k}^R$ along the ray $\arg w =(d+\arg\lambda+2k\pi)/q-\widetilde{\delta}/3$ to a point with modulus $R$ (which can be chosen arbitrarily large), then along the circle $|w|=R$ to the ray
$\arg w = (d+\arg\lambda+2k\pi)/q+\widetilde{\delta}/3$
and back along this ray to the original circle. So, we have
\[
v(t,z)=\frac{t^{\beta-1}}{(\beta-1)!}\partial_t^{\beta-1}v_1(t,z) +
\frac{t^{\beta-1}}{(\beta-1)!}\partial_t^{\beta-1}v_2(t,z),
\]
where
\[
 v_1(t,z):=\sum_{k=0}^{q\kappa-1}\frac{m_1(0)}{2\kappa\pi i}\int_{\gamma_{2k+1}}\varphi(w)
 k(t,z,w)\,dw
\]
and
\[
 v_2(t,z):=\sum_{k=0}^{q\kappa-1}\frac{m_1(0)}{2\kappa\pi i}\int_{\gamma_{2k}^R}\varphi(w)
 k(t,z,w)\,dw.
\]
To study the analytic continuation of $v_1$, observe that for $\arg t = d$,
$\arg z = (d+\arg\lambda+2k\pi)/q$ ($k=0,\dots,q\kappa-1$), $\arg w \neq (d+\arg\lambda+2k\pi)/q$ ($k\in\ZZ$)
and for $q=k_2/k_1$, we may choose a direction $\theta$ in (\ref{eq:kernel}), which satisfies the following conditions
 \begin{itemize}
  \item \[
         \arg t + 2k\pi+ \arg \lambda + q\theta \in \Big(\frac{\pi}{2k_1}, 2\pi - \frac{\pi}{2k_1}\Big)\
         \textrm{for some}\ k\in\ZZ
        \]
    (in this case, by Definition \ref{df:moment}, we have 
    $|E_{m_1}(t\lambda(\zeta))|\leq C|t\lambda(\zeta)|^{-1}$ as
    $\zeta\to\infty$, $\arg\zeta=\theta$),
  \item \[
         \arg z/\kappa + 2l\pi +\theta/\kappa \in \Big(\frac{\pi}{2k_2\kappa}, 2\pi - \frac{\pi}{2k_2\kappa}\Big)\
         \textrm{for some}\ l\in\ZZ
        \]
    (in this case, by Definition \ref{df:moment}, we have 
    $|E_{\widetilde{m}_2}(\zeta^{1/\kappa} z^{1/\kappa})|\leq C'|\zeta z|^{-1/\kappa}$ as
    $\zeta\to\infty$, $\arg\zeta=\theta$),
  \item
   \[
    \arg w + 2n\pi + \theta \in \Big(-\frac{\pi}{2k_2}, \frac{\pi}{2k_2}\Big)\
         \textrm{for some}\ n\in\ZZ
   \]
    (in this case, by Definition \ref{df:moment}, there exists $\varepsilon > 0$ such that
    \begin{gather*}
     \Big|\frac{e_{m_2}(\zeta w)}{\zeta w}\Big| \leq e^{-\varepsilon|\zeta|^{k_2}}\quad\textrm{as}\quad
     \zeta\to\infty,\quad\arg\zeta=\theta).
    \end{gather*}
 \end{itemize}

Hence there exist $\delta>0$ and $r>0$ such that the function
$v_1\in\Oo_{1,1/\kappa}(\widehat{S}_{d}(\delta;r)\times\widehat{S}_{(d+\arg\lambda+2k\pi)/q}(\delta;r))$
 for $k=0,\dots,q\kappa-1$.
 Moreover, there exists $C<\infty$ such that $|k(t,z,w)|<C$ for every $(t,z)\in\widehat{S}_{d}(\delta;r)\times
\widehat{S}_{(d+\arg\lambda+2k\pi)/q}(\delta;r)$ and for every $w\in \bigcup_{k=0}^{q\kappa-1}\gamma_{2k+1}$.
Hence
\[
 |v_1(t,z)| \leq  \frac{q\kappa}{2\kappa\pi}\max_{k=0,\dots,q\kappa-1}\int_{\gamma_{2k+1}}|\varphi(w)|C\,d|w|
  \leq  \widetilde{C}<\infty
\]
and we conclude that $v_1$ is bounded as $t\to\infty$ and $z\to\infty$.
\par
Now we are ready to study the analytic continuation of $v_2$, 
Since the function (\ref{eq:kernel}) belongs to the space
$\Oo_{1,1/\kappa}(\{(t,z)\in\CC^2\colon |t|<a|w|^{q},\ |z|<b|w|\})$, one can find $\delta,r>0$ such that 
$v_2\in\Oo_{1,1/\kappa}(\widehat{S}_{d}(\delta;r)\times\widehat{S}_{(d+\arg\lambda+2k\pi)/q}(\delta;r))$
 for $k=0,\dots,q\kappa-1$ as $R$ tends to infinity.
Estimating this integral we obtain
\[
 |v_2(t,z)| \leq  \frac{q\kappa}{2\kappa\pi}\max_{k=0,\dots,q\kappa-1}\int_{\gamma_{2k}^R}|\varphi(w)|C\,d|w|
  \leq  ARe^{BR^{qK}} \leq \widetilde{A} e^{\widetilde{B}_1|t|^K+\widetilde{B}_2|z|^{qK}},
\]
since $|t| \sim |w|^{q}=R^q$ and $|z|\sim |w|$.
\par
Hence also $v\in\Oo^{K,qK}_{1,1/\kappa}(\widehat{S}_{d}\times \widehat{S}_{(d+\arg\lambda+2k\pi)/q})$
 for $k=0,\dots,q\kappa-1$.
\par
In general case $k_1,k_2 > 0$, there exists $p\in\NN$ such that $\widetilde{k}_1:=pk_1 >1/2$ and 
$\widetilde{k}_2:=pk_2 >1/2$. By \cite[Lemma 3]{Mic7},
the function $w(t,z):=v(t^p,z^p)$ is a solution of
\[
  \left\{
    \begin{array}{l}
     (\partial^p_{\widetilde{m}_1,t} - \lambda(\partial^p_{\widetilde{m}_2,z}))^{\beta}w=0,\\
     \partial^{np}_{\widetilde{m}_1,t}w(0,z)=\varphi_n(z^p)
     \in\Oo^{pqK}_{1/\kappa}(\widehat{S}_{(d+\arg\lambda+2k\pi)/pq})\
     \textrm{for}\ n=0,\dots,\beta-1\\
     \partial^{j}_{\widetilde{m}_1,t}w(0,z)=0\ \textrm{for}\ j=1,\dots,\beta p-1\ \textrm{and}\ p\not|\ j,
    \end{array}
  \right.
\]
where $\widetilde{m}_1(u):=m_1(u/p)$ and $\widetilde{m}_2(u):=m_2(u/p)$ are moment functions of order $1/\widetilde{k}_1$ and $1/\widetilde{k}_2$ respectively.
\par
By Theorem \ref{th:gevrey} we conclude that $w=w_0+\cdots+w_{p-1}$ with $w_j$ ($j=0,\dots,p-1$) satisfying
\[
 \left\{
  \begin{array}{l}
   (\partial_{\widetilde{m}_1,t} - e^{i2j\pi/p}\lambda^{1/p}(\partial^p_{\widetilde{m}_2,z}))^{\beta}w_j=0,\\
   \partial^{n}_{\widetilde{m}_1,t}w_j(0,z)=\widetilde{\varphi}_{jn}(z)\in\Oo^{pqK}_{1/\kappa}(\widehat{S}_{(d+\arg\lambda+2k\pi)/pq})\
   \textrm{for}\ n=0,\dots,\beta-1.
  \end{array}
 \right.
\]
Applying the first part of the proof to the above equation we see that
$w_j(t,z)\in\Oo^{pK,pqK}_{1,1/\kappa}(\widehat{S}_{(d+2j\pi)/p}\times   
\widehat{S}_{(d+\arg\lambda+2k\pi)/pq})$ for $j=1,\dots,p$. It means that
$v(t,z)=w(t^{1/p},z^{1/p})\in\Oo^{K,qK}_{1,1/\kappa}(\widehat{S}_{d}\times 
\widehat{S}_{(d+\arg\lambda+2k\pi)/q})$ for $k=0,\dots,q\kappa-1$.
\par
To prove the last part of the lemma, observe that if
$\varphi_j\in\Oo^{qK}_{1/\kappa}(\hat{S}_{(d +\arg\lambda+2k\pi)/q})$ and
$\varphi_j\in\Oo(D)$ then also $\varphi_j\in\Oo^{qK}(\hat{S}_{(d +\arg\lambda+2k\pi)/q})$
and consequently 
$\varphi_j\in\Oo^{qK}(\hat{S}_{(d+2n\pi/\nu +\arg\lambda+2k\pi)/q})$ for $n=0,\dots,\nu-1$. Hence, replacing $d$
by $d+2n\pi/\nu$ we conclude that  
$v\in\Oo^{K,qK}_{1,1/\kappa}(\hat{S}_{d+2n\pi/\nu}\times \widehat{S}_{(d+\arg\lambda+2k\pi)/q})$
for $n=0,\dots,\nu-1$ and $k=0,\dots,q\kappa-1$.
\end{proof}

Now we are ready to generalise \cite[Theorem 3]{Mic7} as follows
\begin{Th}
   \label{th:main}
   Let $\lambda(\zeta)\sim\lambda\zeta^{q}$ be a root of the characteristic equation of (\ref{eq:general})
   for $q=\mu/\nu$ with relatively prime numbers $\mu,\nu\in\NN$, where 
   $\lambda(\zeta)$ is an analytic
   function of the variable $\xi=\zeta^{1/\kappa}$ for $|\zeta|\geq r_0$
   (for some $r_0>0$).
   Moreover, let us assume that $v$ is a solution of (\ref{eq:formal0}), $1/k_1 =q/k_2$, $K>0$ and $d\in\RR$.
   Then the following conditions are equivalent:
 \begin{enumerate}
   \item[(a)] $\varphi\in\Oo_{1/\kappa}^{qK}(\widehat{S}_{(d+\arg\lambda+2k\pi)/q})$ for $k=0,\dots,q\kappa-1$, 
   \item[(b)] $v\in\Oo^{K,qK}_{1,1/\kappa}(\widehat{S}_d\times \widehat{S}_{(d+\arg\lambda+2k\pi)/q})$
     for $k=0,\dots,q\kappa-1$.
   \item[(c)] $v\in\Oo_{1,1/\kappa}^K(\widehat{S}_d\times D)$,
   \item[(d)] $\partial^j_{m_2,z^{1/\kappa}}v(t,0)\in\Oo^K(\widehat{S}_d)$ for $j=0,\dots,q\kappa\beta-1$.
 \end{enumerate}
  If additionally we assume that $\varphi\in\Oo(D)$ then the above conditions are also equivalent to
 \begin{enumerate}
   \item[(e)] $v\in\Oo_{1,1/\kappa}^{K,qK}(\widehat{S}_{d+2n\pi/\nu} \times 
    \widehat{S}_{(d+\arg\lambda+2k\pi)/q})$ for $n=0,\dots,\nu-1$ and $k=0,\dots,q\kappa-1$,
   \item[(f)] $v\in\Oo_{1,1/\kappa}^K(\widehat{S}_{d+2n\pi/\nu}\times D)$ for $n=0,\dots,\nu-1$, 
 \end{enumerate}
\end{Th}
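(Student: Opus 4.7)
The plan is to follow the scheme of \cite[Theorem 3]{Mic7}, establishing the cyclic chain
(a)$\Rightarrow$(b)$\Rightarrow$(c)$\Rightarrow$(d)$\Rightarrow$(a) and then reducing the convergent-case statements (e)--(f) back into this chain. The new input that makes everything work in the divergent setting is the generalised Lemma \ref{le:integral}, which already encodes the principal quantitative estimate. So most of the work is translating growth information between $v$, its $z$-moment-derivatives at $0$, and the initial datum $\varphi$.

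The implication (a)$\Rightarrow$(b) is essentially a direct application of Lemma \ref{le:integral}. In (\ref{eq:formal0}) the data are $\varphi_j=0$ for $j<\beta-1$ and $\varphi_{\beta-1}=\lambda^{\beta-1}(\partial_{m_2,z})\varphi$, so one only has to check that $\lambda^{\beta-1}(\partial_{m_2,z})\varphi\in\Oo^{qK}_{1/\kappa}(\widehat{S}_{(d+\arg\lambda+2k\pi)/q})$; this follows from the integral representation in Definition \ref{df:pseudo_1} by a contour deformation argument analogous to the one used in the proof of Lemma \ref{le:integral}, since the relevant sectors are arranged exactly so that $e_m$ remains exponentially flat along the rays one needs. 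For (b)$\Rightarrow$(c), note that the $q\kappa$ disc-sectors $\widehat{S}_{(d+\arg\lambda+2k\pi)/q}$ with $k=0,\dots,q\kappa-1$ are equally distributed on the $\kappa$-fold covering of the punctured $z$-plane and have overlapping openings, hence their union contains $\widehat{S}_d\times D'$ for some disc $D'$ on that covering. On such a $D'$ the factor $e^{B_2|z|^{qK}}$ is bounded, so the estimate in (b) reduces to growth of order $K$ in $t$, which is (c).

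For (c)$\Rightarrow$(d), one uses the integral representation of the moment derivative at $z=0$ coming from the Proposition preceding Definition \ref{df:pseudo_1}: evaluating $E_{\widetilde{m}_2}(\zeta^{1/\kappa}z^{1/\kappa})$ at $z=0$ collapses the inner $\zeta$-integral to a constant multiple of $w^{-j/\kappa}$, giving
\[
\partial^j_{m_2,z^{1/\kappa}}v(t,0)=\frac{C_j}{2\kappa\pi i}\oint^{\kappa}_{|w|=\varepsilon}v(t,w)\,w^{-j/\kappa-1}\,dw,
\]
and the $K$-growth in $t$ passes under the contour integral uniformly in $w$. The main obstacle is (d)$\Rightarrow$(a). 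The strategy is to reverse this: using the PDE $(\partial_{m_1,t}-\lambda(\partial_{m_2,z}))^{\beta}v=0$, express the Taylor coefficients of $v$ in $z^{1/\kappa}$ in terms of the family $\bigl\{\partial^j_{m_2,z^{1/\kappa}}v(t,0)\bigr\}_{j=0,\dots,q\kappa\beta-1}$, recover $v(t,z)$ as a $z$-series whose coefficients inherit $K$-growth in $t$, and then reconstruct $\varphi$ via the initial condition $\partial^{\beta-1}_{m_1,t}v(0,z)=\lambda^{\beta-1}(\partial_{m_2,z})\varphi(z)$. Writing the inverse of $\lambda^{\beta-1}(\partial_{m_2,z})$ and applying the Laplace-type / acceleration argument from \cite[Theorem 3]{Mic7} in each direction $(d+\arg\lambda+2k\pi)/q$, the $K$-growth in $t$ at $z=0$ transfers into $qK$-growth of $\varphi$ along each of the $q\kappa$ rays, which is precisely (a). The delicate point is that we are now allowed divergent $\varphi$, so the intermediate reconstruction must be carried out inside the space $\Oo^{qK}_{1/\kappa}$ rather than $\Oo_{1/\kappa}(D)$; this is exactly where the additional flexibility built into Lemma \ref{le:integral} (which in particular allows $\varphi_j\in\Oo^{qK}_{1/\kappa}(\widehat{S}_{(d+\arg\lambda+2k\pi)/q})$ without assuming analyticity near the origin) makes the argument go through.

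Finally, assuming $\varphi\in\Oo(D)$, the implication (a)$\Rightarrow$(e) is the last assertion of Lemma \ref{le:integral}; (e)$\Rightarrow$(f) is the same disc-covering argument as (b)$\Rightarrow$(c) applied sector by sector in $\widehat{S}_{d+2n\pi/\nu}$, and (f)$\Rightarrow$(c) is trivial by taking $n=0$, which places us back into the already-established cycle. This closes all the equivalences.
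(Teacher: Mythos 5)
Your handling of the easy implications is consistent with the paper: (a)$\Rightarrow$(b) is indeed just Lemma \ref{le:integral} (and in fact you do not even need your auxiliary contour-deformation step for $\lambda^{\beta-1}(\partial_{m_2,z})\varphi$, because Lemma \ref{le:integral0} represents the solution of (\ref{eq:formal0}) directly in terms of $\varphi$), while (b)$\Rightarrow$(c), (c)$\Rightarrow$(d), (a)$\Rightarrow$(e) via the last assertion of Lemma \ref{le:integral}, and (e)$\Rightarrow$(f)$\Rightarrow$(c) are as trivial as you say. The problem is the key implication (d)$\Rightarrow$(a), which is where the actual content of the theorem lies and where your sketch has a genuine gap. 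You propose to expand $v$ in a $z^{1/\kappa}$-series whose coefficients are determined by the $q\kappa\beta$ functions in (d), observe that these coefficients inherit $K$-growth in $t$, invert $\lambda^{\beta-1}(\partial_{m_2,z})$, and then invoke ``the Laplace-type / acceleration argument from \cite[Theorem 3]{Mic7}''. But coefficientwise $K$-growth in $t$ of the $z$-Taylor coefficients gives no analytic continuation of $\varphi$ into the unbounded sectors $\widehat{S}_{(d+\arg\lambda+2k\pi)/q}$, and a fortiori no $qK$-growth there; converting the growth of the data in (d) into sectorial continuation in $z$ is precisely the hard step, and the argument of \cite[Theorem 3]{Mic7} cannot simply be cited here, since the present setting (moment functions of arbitrary real order, fractional $q=\mu/\nu$, the two-variable growth classes $\Oo^{K,qK}_{1,1/\kappa}$) is exactly what forces the paper to redo it. Moreover, the inversion of $\lambda^{\beta-1}(\partial_{m_2,z})$ on spaces of functions of exponential growth on unbounded sectors is nowhere available: the pseudodifferential calculus of Definition \ref{df:pseudo_1} acts on $\Oo_{1/\kappa}(D)$, and the paper's proof is organised so that this inversion is never needed.

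What the paper actually does for (d)$\Rightarrow$(a), and what your argument is missing, is an exchange of the roles of $t$ and $z$: one passes to $w(t,z):=v(t^{q\kappa},z^{\kappa})$, whose equation factors through roots $\widetilde{\lambda}_j(\zeta)=e^{2\pi ij/q\kappa}\lambda^{1/q\kappa}(\zeta^{\kappa})$ of pole order exactly $1$, with the two rescaled moment orders equal; then \cite[Lemma 7]{Mic7} lets one rewrite the equation with $\partial_{\widetilde{m}_2,z}$ as the time-like derivative and with Cauchy data at $z=0$ given precisely by the functions in (d), which lie in $\Oo^{q\kappa K}(\widehat{S}_{(d+2\pi k)/q\kappa})$; Theorem \ref{th:gevrey} decomposes $w=w_0+\cdots+w_{q\kappa-1}$, and Lemma \ref{le:integral} applied with the variables swapped (to the roots $\widetilde{\lambda}_j^{-1}(\tau)\sim e^{-2\pi ij/q\kappa}\lambda^{-1/q\kappa}\tau$) yields $w\in\Oo^{q\kappa K}(D\times\widehat{S}_{(d+\arg\lambda+2\pi k)/q\kappa})$, hence $v\in\Oo^{qK}_{1/q\kappa,1/\kappa}(D\times\widehat{S}_{(d+\arg\lambda+2\pi k)/q})$ and in particular (a). Without this substitution-plus-swap device (or an equivalent quantitative Borel--Laplace argument carried out in the $z$ variable), your chain does not close, so the proposal as written does not prove the theorem.
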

\begin{proof}
   The implication (a) $\Rightarrow$ (b) is given immediately by Lemma \ref{le:integral}.
   The implications (b) $\Rightarrow$ (c) and (c) $\Rightarrow$ (d) are trivial.
   To prove the implication (d) $\Rightarrow$ (a), observe that by \cite[Lemma 3]{Mic7}
   the function $w(t,z):=v(t^{q\kappa},z^{\kappa})$ satisfies 
 $$
 (\partial^{q\kappa}_{\widetilde{m}_1,t}-\lambda(\partial^{\kappa}_{\widetilde{m}_2,z}))^{\beta}w=0,
 $$
 where $\widetilde{m}_1(u):=m_1(u/q\kappa)$ and $\widetilde{m}_2(u):=m_2(u/\kappa)$ are moment functions
 of orders $1/\widetilde{k}_1:=1/k_1q\kappa$ and $1/\widetilde{k}_2:=1/k_2\kappa$.
 It means that $w$ is also a solution of the equation
 $$
  (\partial_{\widetilde{m}_1,t}-\widetilde{\lambda}_0(\partial_{\widetilde{m}_2,z}))^{\beta}\cdots
  (\partial_{\widetilde{m}_1,t}-\widetilde{\lambda}_{q\kappa-1}(\partial_{\widetilde{m}_2,z}))^{\beta} w=0,
 $$
 where 
 \begin{eqnarray*}
 \widetilde{\lambda}_j(\zeta):=e^{i2\pi j/q\kappa}\lambda^{1/q\kappa}(\zeta^{\kappa}) & \textrm{for} & j=0,\dots,q\kappa-1.
\end{eqnarray*}
Since $\widetilde{\lambda}_j(\zeta)$ is an analytic function for sufficiently large $|\zeta|$ with a pole order
equal to $1$ (more precisely $\widetilde{\lambda}_j(\zeta)\sim e^{i2\pi j/q\kappa}\lambda^{1/q\kappa}\zeta$) and
$1/\widetilde{k}_1=1/\widetilde{k}_2$, by \cite[Lemma 7]{Mic7} and by the condition (d),
the function $w$ satisfies also
   \begin{eqnarray*}
 \left\{
  \begin{array}{l}
   (\partial_{\widetilde{m}_2,z}-\widetilde{\lambda}^{-1}_0(\partial_{\widetilde{m}_1,t}))^{\beta}\cdots
   (\partial_{\widetilde{m}_2,z}-\widetilde{\lambda}^{-1}_{q\kappa-1}(\partial_{\widetilde{m}_1,t}))^{\beta}w=0,\\
   \partial_{\widetilde{m}_2,z}^{n}w(t,0)=\widetilde{\psi}_n(t)\in\Oo^{q\kappa K}(\hat{S}_{(d+2\pi k)/q\kappa})
  \end{array}
 \right.
\end{eqnarray*}
for $n=0,\dots,q\kappa\beta-1$ and $k=0,\dots, q\kappa-1$.
    Hence, by Theorem \ref{th:gevrey}, $w=w_0+\dots+w_{q\kappa-1}$ with $w_j$ ($j=0,\dots,q\kappa-1$) satisfying
    \begin{eqnarray*}
 \left\{
  \begin{array}{l}
   (\partial_{\widetilde{m}_2,z} - \widetilde{\lambda}^{-1}_j(\partial_{\widetilde{m}_1,t}))^{\beta}w_j=0,\\
   \partial^{n}_{\widetilde{m}_2,z}w_j(t,0)=\widetilde{\psi}_{jn}(t)\in\Oo^{q\kappa K}(\hat{S}_{(d+2\pi k)/q\kappa})
  \end{array}
 \right.
\end{eqnarray*}
  for $n=0,\dots,\beta-1$, $k=0,\dots,q\kappa -1$. Since $\widetilde{\lambda}_j^{-1}(\tau)\sim e^{-i2\pi j/q\kappa} \lambda^{-1/q\kappa} \tau$, 
    by Lemma \ref{le:integral} with replaced variables, we conclude that
    $w_j(t,z)\in\Oo^{q\kappa K}(D\times\hat{S}_{\theta_{jk}})$, where
    $$\theta_{jk}:=(d+2\pi k)/q\kappa - \arg(e^{-i2\pi j/q\kappa}\lambda^{-1/q\kappa})=(d+\arg\lambda+2\pi (k+j))/q\kappa$$ for $k=0,\dots,q\kappa-1$.
    In consequence, also $w(t,z)\in\Oo^{q\kappa K}(D\times\hat{S}_{(d+\arg\lambda+2\pi k)/q\kappa})$
    and finally $v(t,z)=w(t^{1/q\kappa},z^{1/\kappa})\in \Oo^{q K}_{1/q\kappa,1/\kappa}(D\times\hat{S}_{(d+\arg\lambda+2\pi k)/q})$. In particular $\varphi(z)\in \Oo^{q K}_{1/\kappa}(\hat{S}_{(d+\arg\lambda+2\pi k)/q})$ for $k=0,\dots,q\kappa-1$,
    which proves the implication (d) $\Rightarrow$ (a).
  
  If additionally $\varphi\in\Oo(D)$ then also
$\varphi\in\Oo^{qK}(\hat{S}_{(d+2n\pi/\nu +\arg\lambda+2k\pi)/q})$ for $n=0,\dots,\nu-1$. Hence, replacing $d$
by $d+2n\pi/\nu$ we conclude by Lemma \ref{le:integral} that
$v\in\Oo^{K,qK}_{1,1/\kappa}(\widehat{S}_{d+2n\pi/\nu}\times \widehat{S}_{(d+\arg\lambda+2k\pi)/q})$
for $n=0,\dots,\nu-1$ and $k=0,\dots,q\kappa-1$ an the implication (a) $\Rightarrow$ (e) holds. The last implications (e) $\Rightarrow$ (f) and (f) $\Rightarrow $ (c) are obvious.
\end{proof}

By the above theorem we conclude
\begin{Cor}
 \label{co:exp}
 If $K'>0$, $d'\in\RR$, $\varphi\in\Oo^{K'}(\widehat{S}_{d'})$ and $m$ is a moment function of order $0$,
 then also $\Bo_{m,z}\varphi\in\Oo^{K'}(\widehat{S}_{d'})$.
\end{Cor}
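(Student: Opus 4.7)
The plan is to reduce the corollary to Theorem~\ref{th:main} applied to a simple transport-type moment equation. Since $m$ is a moment function of order $0$, Definition~\ref{df:moment_general} lets us factor $m=m_1/m_2$ for some moment functions $m_1,m_2$ of a common positive order $1/k$. I would then introduce the Cauchy problem
$$
(\partial_{m_1,t}-\partial_{m_2,z})v=0,\qquad v(0,z)=\varphi(z),
$$
which fits (\ref{eq:formal0}) with $\lambda(\zeta)=\zeta$, $\beta=1$, $\kappa=1$, $q=\mu/\nu=1/1$, and $\arg\lambda=0$; the balance condition $1/k_1=q/k_2$ reads $1/k=1/k$ and so holds automatically.

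Since $\varphi\in\Oo^{K'}(\widehat{S}_{d'})\subset\Oo(D)$, the Cauchy datum qualifies for (\ref{eq:formal0}). By Theorem~\ref{th:gevrey} the unique formal solution $v$ has Gevrey order $\overline{q}(s_2+s)-s_1=(1/k+0)-1/k=0$ in $t$, so $v$ is holomorphic on a neighbourhood of the origin. The hypothesis on $\varphi$ is precisely condition (a) of Theorem~\ref{th:main} with $d=d'$ and $K=K'$ (the parameters collapse because $q\kappa=1$, leaving only the index $k=0$). Invoking the implication (a)$\Rightarrow$(d) of that theorem (again only $j=0$ is relevant, as $q\kappa\beta=1$) yields
$$
v(t,0)\in\Oo^{K'}(\widehat{S}_{d'}).
$$

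Finally, I would identify $v(t,0)$ explicitly. Writing $\varphi(z)=\sum_n a_n z^n$ and $v(t,z)=\sum_{j,n\ge 0}\frac{w_{jn}}{m_1(j)m_2(n)}t^jz^n$, Definition~\ref{df:moment_diff} turns the equation into the recurrence $w_{j+1,n}=w_{j,n+1}$, so $w_{j,n}=w_{0,j+n}$, while the initial condition gives $w_{0,n}=a_n m_1(0)m_2(n)$. Setting $z=0$ and using $m(j)=m_1(j)/m_2(j)$ one computes
$$
v(t,0)=\frac{m_1(0)}{m_2(0)}\sum_{j=0}^{\infty}\frac{a_j}{m(j)}\,t^j=\frac{m_1(0)}{m_2(0)}\,(\Bo_{m,z}\varphi)(t),
$$
and since the prefactor is a nonzero constant, the claim $\Bo_{m,z}\varphi\in\Oo^{K'}(\widehat{S}_{d'})$ follows at once.

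The main (but rather mild) obstacle is choosing the correct auxiliary equation so that Theorem~\ref{th:main} applies in its simplest possible form; once that choice is made, the only real computation is the coefficient bookkeeping identifying the ``trace'' $v(t,0)$ with a scalar multiple of $\Bo_{m,z}\varphi$.
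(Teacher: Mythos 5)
Your proof is correct, but it runs Theorem \ref{th:main} in the opposite direction from the paper and on a different auxiliary problem. The paper takes the ordinary transport equation $(\partial_t-\partial_z)v=0$ with datum $\varphi$, so that $v(t,z)=\varphi(t+z)$ is explicit and visibly lies in $\Oo^{K'}(\widehat{S}_{d'}\times D)$; it then applies $\Bo_{m,z}$, uses Proposition \ref{pr:formal} to see that $\Bo_{m,z}v$ solves $(\partial_t-\partial_{\Gamma_1m,z})w=0$ with datum $\Bo_{m,z}\varphi$, and invokes Theorem \ref{th:main} in the direction ``growth of the solution implies growth of the datum'' (essentially (c)$\Rightarrow$(a)). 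You instead use Definition \ref{df:moment_general} to factor $m=m_1/m_2$ with $m_1,m_2$ of a common positive order, pose the moment transport equation $(\partial_{m_1,t}-\partial_{m_2,z})v=0$ with datum $\varphi$ itself, apply (a)$\Rightarrow$(d) of Theorem \ref{th:main} (all parameters collapse since $q=\kappa=\beta=1$ and $\arg\lambda=0$), and then identify the trace $v(t,0)$ with $\tfrac{m_1(0)}{m_2(0)}\,\Bo_{m,z}\varphi$ by the coefficient recurrence $w_{j+1,n}=w_{j,n+1}$; that computation is correct, and the nonvanishing of the constant is clear since moment functions are positive at $0$. What each route buys: yours avoids the one step the paper leaves implicit, namely that an order-$0$ Borel transform in $z$ preserves the class $\Oo^{K'}(\widehat{S}_{d'}\times D)$ (which the paper justifies only by appeal to $m$ having order $0$, i.e.\ Proposition \ref{pr:order} plus Cauchy estimates), at the price of needing the factorization of $m$ and the explicit trace identification; the paper's route is shorter because its solution $\varphi(t+z)$ is completely explicit. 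Your appeal to Theorem \ref{th:gevrey} for analyticity of $v$ near the origin is harmless but unnecessary, since Lemma \ref{le:integral0} already provides it under $1/k_1=q/k_2$.
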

\begin{proof}
 Let $v$ be a solution of 
 \begin{gather*}
  (\partial_t-\partial_z)v=0,\quad v(0,z)=\varphi(z)\in\Oo^{K'}(\widehat{S}_{d'}).
 \end{gather*}
 Then $v(t,z)=\varphi(t+z)\in\Oo^{K'}(\widehat{S}_{d'}\times D)$. Since $m$ is a moment
 function of order $0$, we see that also $\Bo_{m,z}v\in\Oo^{K'}(\widehat{S}_{d'}\times D)$.
 On the other hand, by Proposition \ref{pr:formal}, $\Bo_{m,z}v$ is a solution of
 \begin{gather*}
   (\partial_t-\partial_{\Gamma_1 m,z})\Bo_{m,z}v=0,\quad \Bo_{m,z}v(0,z)=\Bo_{m,z}\varphi(z)\in\Oo(D).
 \end{gather*}
 Hence,
 applying Theorem \ref{th:main}, we conclude that $\Bo_{m,z}\varphi\in\Oo^{K'}(\widehat{S}_{d'})$.
\end{proof}

\section{Summable and multisummable solutions}
In this section we characterise summable formal solutions $\widehat{u}$ of (\ref{eq:gevrey}) in terms of the 
Cauchy data $\widehat{\varphi}$. Next, we also give a similar characterisation of multisummable normalised formal solutions
of general equation (\ref{eq:general}).

Applying Theorem \ref{th:main} we obtain the following impressive characterisation
of summable solutions of simple pseudodifferential equations (\ref{eq:gevrey})
\begin{Th}
\label{th:summable}
    Let $\lambda(\zeta)\sim\lambda\zeta^{q}$ be a root of the characteristic equation of (\ref{eq:general})
   for $q=\mu/\nu$ with relatively prime numbers $\mu,\nu\in\NN$, where 
   $\lambda(\zeta)$ is an analytic
   function of the variable $\xi=\zeta^{1/\kappa}$ for $|\zeta|\geq r_0$
   (for some $r_0>0$).
   We also assume that $m_1$, $m_2$ are moment functions of orders $s_1,s_2\in\RR$ respectively,
   $d,s\in\RR$, $s>-s_2$, $q>\frac{s_1}{s_2+s}$, $K=\big(q(s_2+s)-s_1\big)^{-1}$ and
   $\widehat{u}$ is a formal solution of
   \begin{equation}
    \label{eq:th_sum}
    \left\{
    \begin{array}{l}
     (\partial_{m_1,t}-\lambda(\partial_{m_2,z}))^{\beta}\widehat{u}=0\\
     \partial_{m_1,t}^j \widehat{u}(0,z)=0\ \ (j=0,\dots,\beta-2)\\
     \partial_{m_1,t}^{\beta-1} u(0,z)=\lambda^{\beta-1}(\partial_{m_2,z})\widehat{\varphi}(z)\in
     \CC[[z^{\frac{1}{\kappa}}]]_s.
    \end{array}
    \right.
   \end{equation}
   Then the following conditions are equivalent:
   \begin{enumerate}
     \item[(a)] $\Bo_{\Gamma_s,z^{1/\kappa}}\widehat{\varphi}\in\Oo^{qK}_{1/\kappa}
     (\widehat{S}_{(d+\arg\lambda+2k\pi)/q})$ 
     for $k=0,\dots,q\kappa-1$,
     \item[(b)] $\Bo_{\Gamma_{1/K},t}\Bo_{\Gamma_s,z^{1/\kappa}}\widehat{u}\in\Oo^K_{1,1/\kappa}
     (\widehat{S}_d\times D)$,  
     \item[(c)] $\Bo_{\Gamma_{1/K},t}\Bo_{\Gamma_{s},z^{1/\kappa}}u\in\Oo^{K,qK}_{1,1/\kappa}
     (\widehat{S}_d\times \widehat{S}_{(d+\arg\lambda+2k\pi)/q})$
       for $k=0,\dots,q\kappa-1$,
     \item[(d)] $\Bo_{\Gamma_{s_1/q-s_2},z^{1/\kappa}}\widehat{\varphi}$ is $qK$-summable in the directions
        $(d+\arg\lambda+2k\pi)/q$ for $k=0,\dots,q\kappa-1$,
     \item[(e)] $\widehat{u}(t,z)\in G_{s,1/\kappa}[[t]]$ is $K$-summable in the direction $d$,
   \end{enumerate}
   Moreover, if additionally $s>0$ and $qs_2 \geq s_1$ then the above conditions (a)--(e) are also equivalent to
   \begin{enumerate}
     \item[(f)] $\widehat{u}(t,z)\in\CC[[t,z^{\frac{1}{\kappa}}]]$ is $(K,1/s)$-summable in the directions
        $(d,(d+\arg\lambda+2k\pi)/q)$ for $k=0,\dots,q\kappa-1$,     
     \item[(g)] $\widehat{u}(t,z)\in\CC[[t,z^{\frac{1}{\kappa}}]]$ is $(K,1/s)$-summable in the directions
     \linebreak
        $O_{d,(d+\arg\lambda+2k\pi)/q}$ for $k=0,\dots,q\kappa-1$,  
    \end{enumerate}
\end{Th}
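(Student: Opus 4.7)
The plan is to reduce everything to Theorem \ref{th:main} by applying the combined moment Borel transform $v:=\Bo_{\Gamma_{1/K},t}\Bo_{\Gamma_{s},z^{1/\kappa}}\widehat{u}$, and then to cash in the additional equivalences under the extra hypotheses. By Proposition \ref{pr:formal} together with Proposition \ref{pr:commutation2} applied to the factor $\lambda(\partial_{m_2,z})$, $v$ is an honest analytic solution of an equation of the form (\ref{eq:formal0}) with moment functions $\overline{m}_1:=m_1\Gamma_{1/K}$ of order $s_1+1/K=q(s_2+s)$ and $\overline{m}_2:=m_2\Gamma_s$ of order $s_2+s$, and with convergent Cauchy datum $\Bo_{\Gamma_s,z^{1/\kappa}}\widehat{\varphi}\in\Oo_{1/\kappa}(D)$ (the latter by Definition \ref{df:summab}). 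The crucial compatibility $1/\overline{k}_1=q/\overline{k}_2$ forced by $K=(q(s_2+s)-s_1)^{-1}$ is preserved, so Theorem \ref{th:main} applies to $v$. Identifying our (a), (b), (c) with conditions (a), (c), (b) of Theorem \ref{th:main} respectively then gives (a)$\Leftrightarrow$(b)$\Leftrightarrow$(c) at once, while (b)$\Leftrightarrow$(e) is a direct reading of Remark \ref{re:summable}.

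For (a)$\Leftrightarrow$(d) I expand (d) via Definition \ref{df:summable}: using the flexibility granted by Remark \ref{re:sum}, I replace $\Gamma_{1/(qK)}$ by the moment function $m:=\Gamma_s/\Gamma_{s_1/q-s_2}$, which has the correct order $s-(s_1/q-s_2)=1/(qK)$ by Proposition \ref{pr:moments}. Proposition \ref{pr:properties} then collapses the composition
\[
\Bo_{m,z^{1/\kappa}}\Bo_{\Gamma_{s_1/q-s_2},z^{1/\kappa}}\widehat{\varphi}=\Bo_{\Gamma_s,z^{1/\kappa}}\widehat{\varphi},
\]
so (d) is literally (a).

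Under the additional assumptions $s>0$ and $qs_2\geq s_1$, one has $qK\leq 1/s$, and hence the inclusion $\Oo^{K,qK}_{1,1/\kappa}\subset\Oo^{K,1/s}_{1,1/\kappa}$ on the polysectors in question, yielding (c)$\Rightarrow$(f) immediately; conversely, restricting the second variable to $D$ reduces (f) to (b), and (b)$\Rightarrow$(c) by the first paragraph. The last equivalence (f)$\Leftrightarrow$(g) is obtained from Lemma \ref{le:beta} and Proposition \ref{pr:Bsum}: the beta-integral identity writes $\Bo_{(1/K,s)}\widehat{u}$ as the first-order differential operator $1+\tfrac{1}{K}t\partial_t+sz\partial_z$ acting on an $\varepsilon$-average of $\Bo_{\Gamma_{1/K},t}\Bo_{\Gamma_{s},z^{1/\kappa}}\widehat{u}(t\varepsilon^{1/K},z(1-\varepsilon)^{s})$, and because $s>0$ the substitution $(t,z)\mapsto(t\varepsilon^{1/K},z(1-\varepsilon)^{s})$ preserves the polysector $\widehat{S}_d\times\widehat{S}_{(d+\arg\lambda+2k\pi)/q}$ for every $\varepsilon\in(0,1)$. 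The main obstacle I anticipate is precisely this last verification: one must justify passage of differentiation under the integral and check that bounds of the form $Ae^{B_1|t|^K+B_2|z|^{1/s}}$ integrate uniformly in $\varepsilon$ without inflating the growth orders in either variable.
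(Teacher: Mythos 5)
Your reduction to Theorem \ref{th:main} via $v:=\Bo_{\Gamma_{1/K},t}\Bo_{\Gamma_s,z^{1/\kappa}}\widehat{u}$, the order bookkeeping $1/\overline{k}_1=q(s_2+s)=q/\overline{k}_2$, the identification of (a),(b),(c) with (a),(c),(b) of Theorem \ref{th:main}, and the equivalences (b)$\Leftrightarrow$(e), (c)$\Rightarrow$(f), (f)$\Rightarrow$(b) are exactly the paper's argument. Your treatment of (a)$\Leftrightarrow$(d) is a small legitimate variant: instead of composing $\Gamma_{1/qK}\Gamma_{s_1/q-s_2}$ (a moment function of order $s$ but not $\Gamma_s$) and then invoking Corollary \ref{co:exp} to pass between two order-$s$ Borel transforms, you use Remark \ref{re:sum} to test summability with the moment function $\Gamma_s/\Gamma_{s_1/q-s_2}$ of order $1/(qK)$, which collapses to $\Bo_{\Gamma_s,z^{1/\kappa}}\widehat{\varphi}$ by Proposition \ref{pr:properties}; this is consistent with the paper's framework (Proposition \ref{pr:moments} makes the quotient a genuine moment function of positive order) and avoids Corollary \ref{co:exp} at that spot.

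The genuine gap is in (f)$\Leftrightarrow$(g): you only prove one direction. Lemma \ref{le:beta} expresses $\widetilde{v}=\Bo_{(1/K,s)}\widehat{u}$ in terms of $v=\Bo_{\Gamma_{1/K},t}\Bo_{\Gamma_s,z^{1/\kappa}}\widehat{u}$, so the substitution/averaging argument (which is indeed the paper's route, with exponent $s/\kappa$ in the $z$-variable after passing to $V(t,z)=v(t,z^{\kappa})$) yields only (c)$\Rightarrow$(g). It gives nothing for (g)$\Rightarrow$(c), since the beta-integral formula is not inverted and the obstacle you flag (uniformity in $\varepsilon$) is not the real difficulty. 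The paper closes this direction by a separate argument: comparing the boundary data, one has
\[
\partial_{\overline{m}_2,z^{1/\kappa}}^n v(t,0)=\Bo_{m_n',t}\,\partial_{\overline{m}_2,z^{1/\kappa}}^n \widetilde{v}(t,0),
\qquad m_n'(u)=\frac{\Gamma(1+u/K+sn/\kappa)}{\Gamma(1+u/K)\Gamma(1+sn/\kappa)},
\]
where $m_n'$ is a moment function of order $0$; from (g) one reads off $\partial_{\overline{m}_2,z^{1/\kappa}}^n\widetilde{v}(t,0)\in\Oo^K(\widehat{S}_d)$, Corollary \ref{co:exp} transfers this to $\partial_{\overline{m}_2,z^{1/\kappa}}^n v(t,0)$ for $n=0,\dots,q\kappa\beta-1$, and then the implication (d)$\Rightarrow$(b) of Theorem \ref{th:main} gives $v\in\Oo^{K,qK}_{1,1/\kappa}(\widehat{S}_d\times\widehat{S}_{(d+\arg\lambda+2k\pi)/q})$, i.e.\ (c). Without this (or some substitute), your proposal establishes (a)--(f) and (c)$\Rightarrow$(g) but not that (g) implies the other conditions, so the stated equivalence is not proved.
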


\begin{Rem}
   If we assume additionally that $\varphi\in\Oo(D)$ then we may replace the direction $d$ by $d+2n\pi/\nu$
   ($n=0,\dots,\nu-1$). Hence the conditions (a)--(e) are also equivalent to
   \begin{enumerate}
    \item[(h)] $\Bo_{\Gamma_{1/K},t}\Bo_{\Gamma_s,z^{1/\kappa}}\widehat{u}\in\Oo^K_{1,1/\kappa}(\widehat{S}_{d+2n\pi/\nu}
     \times D)$ for $n=0,\dots,\nu-1$,
     \item[(i)] $\Bo_{\Gamma_{1/K},t}\Bo_{\Gamma_{s},z^{1/\kappa}}u\in\Oo^{K,qK}_{1,1/\kappa}(\widehat{S}_{d+2n\pi/\nu} \times \widehat{S}_{(d+\arg\lambda+2k\pi)/q})$ for
     $n=0,\dots,\nu-1$ and $k=0,\dots,q\kappa-1$,
     \item[(j)] $\widehat{u}(t,z)\in G_{s,1/\kappa}[[t]]$ is $K$-summable in the directions $d+2n\pi/\nu$
        for $n=0,\dots,\nu-1$,
   \end{enumerate}
  and the conditions (f)--(g) are equivalent to
  \begin{enumerate}
   \item[(k)] $\widehat{u}(t,z)\in\CC[[t,z^{\frac{1}{\kappa}}]]$ is $(K,1/s)$-summable in the directions
        $(d+2n\pi/\nu,(d+\arg\lambda+2k\pi)/q)$ for $k=0,\dots,q\kappa-1$ and $n=0,\dots,\nu-1$.
   \item[(l)] $\widehat{u}(t,z)\in\CC[[t,z^{\frac{1}{\kappa}}]]$ is $(K,1/s)$-summable in the directions
     \linebreak
        $O_{d+2n\pi/\nu,(d+\arg\lambda+2k\pi)/q}$ for $k=0,\dots,q\kappa-1$ and $n=0,\dots,\nu-1$.
  \end{enumerate}
\end{Rem}

\begin{proof}[Proof of Theorem \ref{th:summable}.]
   First, observe that by Propositions \ref{pr:commutation} and \ref{pr:commutation2} the function $v:=\Bo_{\Gamma_{1/K},t}\Bo_{\Gamma_{s},z^{1/\kappa}}u$
   satisfies the equation
   \[
     \left\{
     \begin{array}{l}
       (\partial_{\overline{m}_1,t}-\lambda(\partial_{\overline{m}_2,z}))^{\beta}v=0\\
       \partial_{\overline{m}_1,t}^j v(0,z)=0\ \ (j=0,\dots,\beta-2)\\
       \partial_{\overline{m}_1,t}^{\beta-1} v(0,z)=
       \lambda^{\beta-1}(\partial_{\overline{m}_2,z})\Bo_{\Gamma_s,z^{1/\kappa}}\widehat{\varphi}(z)
       \in\Oo_{1/\kappa}(D),
     \end{array}
     \right.
   \]
   where $\overline{m}_1:=m_1\Gamma_{1/K}$ is a moment function of order
   $1/\overline{k}_1:= s_1 + 1/K = q(s_2+s)>0$ and $\overline{m}_2:=m_2\Gamma_s$
   is a moment function of order $1/\overline{k}_2:=s_2 + s>0$. Since $1/\overline{k}_1 = q/\overline{k}_2$,
   applying Theorem \ref{th:main} to $v$ we conclude that the properties (a)--(c) are equivalent.
   
   Moreover, by Remark \ref{re:summable} we obtain the equivalence
   (b) $\Leftrightarrow$ (e). 
   
   To show the equivalence between (a) and (d), observe that 
   $\Bo_{\Gamma_{s_1/q-s_2},z^{1/\kappa}}\widehat{\varphi}$ is $qK$-summable in directions
   $(d+\arg\lambda+2k\pi)/q$ for $k=0,\dots,q\kappa-1$ if and only if 
   $\Bo_{\Gamma_{1/qK},z^{1/\kappa}}\Bo_{\Gamma_{s_1/q-s_2},z^{1/\kappa}}\widehat{\varphi}\in
   \Oo^{qK}_{1,1/\kappa}(\widehat{S}_{(d+\arg\lambda+2k\pi)/q})$ for $k=0,\dots,q\kappa-1$.
   By Proposition \ref{pr:properties} and Corollary \ref{co:exp}, it is equivalent to (a).
   
   Now we assume additionally that $s>0$ and $qs_2 \geq s_1$.
   To find the equivalence between (f) and the previous conditions (a)--(e), it is sufficient to show 
   implications (c) $\Rightarrow$ (f) and (f) $\Rightarrow$ (b). To this end observe that $qK\leq 1/s$. Hence
   if $\Bo_{\Gamma_{1/K},t}\Bo_{\Gamma_{s},z^{1/\kappa}}u\in
   \Oo^{K,qK}_{1,1/\kappa}(\widehat{S}_d\times \widehat{S}_{(d+\arg\lambda+2k\pi)/q})$
   then also
  $\Bo_{\Gamma_{1/K},t}\Bo_{\Gamma_{s},z^{1/\kappa}}u\in
  \Oo^{K,1/s}_{1,1/\kappa}(\widehat{S}_d\times \widehat{S}_{(d+\arg\lambda+2k\pi)/q})$
   (for $k=0,\dots,q\kappa-1$) and consequently by Definition \ref{df:summable2} we conclude (f).
   In the opposite side, if $u$ satisfies (f) then
   $\Bo_{\Gamma_{1/K},t}\Bo_{\Gamma_{s},z^{1/\kappa}}u\in\Oo^{K,1/s}_{1,1/\kappa}
   (\widehat{S}_d\times \widehat{S}_{(d+\arg\lambda+2k\pi)/q})$. In particular,
   $\Bo_{\Gamma_{1/K},t}\Bo_{\Gamma_{s},z^{1/\kappa}}u\in\Oo^{K}(\widehat{S}_d\times D)$, which gives (b).
 
 Next we show the equivalence (c) $\Leftrightarrow$ (g). By Proposition \ref{pr:Bsum},
   $\widehat{u}(t,z)=\sum_{j,n=0}^{\infty}u_{jn}t^jz^{n/\kappa}$ is $(K,1/s)$ summable in the direction
   $O_{d,(d+\arg\lambda+2k\pi)/q}$ if and only if 
   \[
   \widetilde{v}(t,z):=\sum_{j,n=0}^{\infty}\frac{u_{jn}}{\Gamma(1+j/K+sn/\kappa)}t^kz^{n/\kappa} \in \Oo^{K,1/s}_{1,1/\kappa}(\widehat{S}_d
   \times \widehat{S}_{(d+\arg\lambda+2k\pi)/q}).
   \]
   So, it is sufficient to show
   \[
   v\in\Oo^{K,qK}_{1,1/\kappa}(\widehat{S}_d\times \widehat{S}_{(d+\arg\lambda+2k\pi)/q}) \Leftrightarrow
   \widetilde{v}\in \Oo^{K,1/s}_{1,1/\kappa}(\widehat{S}_d\times \widehat{S}_{(d+\arg\lambda+2k\pi)/q}).
   \]
   By Lemma \ref{le:beta} we get the following connection between
   $\widetilde{V}(t,z):=\widetilde{v}(t,z^{\kappa})$ and $V(t,z):=v(t,z^{\kappa})$
\[
   \widetilde{V}(t,z)=(1+\frac{1}{K}t\partial_t+\frac{s}{\kappa}z\partial_z)\int_0^1
   V(t\varepsilon^{1/K},z(1-\varepsilon)^{s/\kappa})\,d\varepsilon.
\]
By the above formula and by the assumption $Kq\leq 1/s$ we conclude that if
$v\in\Oo^{K,Kq}_{1,1/\kappa}(\widehat{S}_d \times \widehat{S}_{(d+\arg\lambda+2k\pi)/q})$
then $\widetilde{v}\in\Oo^{K,1/s}_{1,1/\kappa}(\widehat{S}_d \times \widehat{S}_{(d+\arg\lambda+2k\pi)/q})$.

To show the implication in the opposite side, we use the connection between the boundary 
conditions for $v$ and $\widetilde{v}$. Namely, since
\[
\partial_{\overline{m}_2,z^{1/\kappa}}^n \widetilde{v}(t,0)=
\frac{\overline{m}_2(n)}{\overline{m}_2(0)}\sum_{j=0}^{\infty}\frac{u_{jn}}{\Gamma(1+j/K+sn/\kappa)}t^j
\]
and
\[
\partial_{\overline{m}_2,z^{1/\kappa}}^n v(t,0)=
\frac{\overline{m}_2(n)}{\overline{m}_2(0)}\sum_{j=0}^{\infty}
\frac{u_{jn}}{\Gamma(1+j/K)\Gamma(1+sn/\kappa)}t^j,
\]
we get 
\[
\partial_{\overline{m}_2,z^{1/\kappa}}^n v(t,0) =
\Bo_{m_n',t}\partial_{\overline{m}_2,z^{1/\kappa}}^n \widetilde{v}(t,0),
\]
where $m_n'(u):=\frac{\Gamma(1+u/K+sn/\kappa)}{\Gamma(1+u/K)\Gamma(1+sn/\kappa)}$ is a moment function
of order $0$ for $n=0,\dots,q\kappa\beta-1$. So, since
$\partial_{\overline{m}_2,z^{1/\kappa}}^n \widetilde{v}(t,0)\in\Oo^{K}(\widehat{S}_d)$, by Corollary \ref{co:exp}
we see that also $\partial_{\overline{m}_2,z^{1/\kappa}}^n v(t,0)\in\Oo^{K}(\widehat{S}_d)$ for
$n=0,\dots,q\kappa\beta-1$. Hence, by Theorem \ref{th:main} we conclude that
$v\in\Oo^{K,qK}_{1,1/\kappa}(\widehat{S}_d \times \widehat{S}_{(d+\arg\lambda+2k\pi)/q})$.
\end{proof}

Now we return to the general equation (\ref{eq:general}).
For convenience we assume that
 \[
   P(\lambda,\zeta)=P_0(\zeta)\prod_{\alpha=1}^{\widetilde{n}}\prod_{\beta=1}^{l_{\alpha}}
   (\lambda-\lambda_{\alpha\beta}(\zeta))^{n_{\alpha\beta}},
 \]
  where $\lambda_{\alpha\beta}(\zeta)\sim\lambda_{\alpha\beta}\zeta^{q_{\alpha}}$ are the roots of the characteristic equation $P(\lambda,\zeta)=0$ with 
  pole orders $q_\alpha\in\QQ$ and leading terms $\lambda_{\alpha\beta}\in\CC\setminus\{0\}$ for
  $\beta=1,\dots,l_{\alpha}$ and $\alpha=1,\dots,\widetilde{n}$.
  \par
 We also assume that $s,s_1,s_2\in\RR$, $s_1>0$, $s+s_2>0$ and $\widehat{\varphi}_j\in\CC[[z]]_s$ for $j=0,...,n-1$.
 Without loss of generality we may assume that there exist exactly $N$ pole orders of the roots of the 
 characteristic equation,
 which are greater than $\frac{s_1}{s_2+s}$, say $\frac{s_1}{s_2+s}<q_1<\cdots<q_N<\infty$ and let
 $K_{\alpha}>0$ be defined by
 $K_{\alpha}:=(q_{\alpha}(s_2+s)-s_1)^{-1}$ for $\alpha=1,\dots,N$. 
 \par
 By Theorem \ref{th:gevrey}, the normalised formal solution $\widehat{u}$ of (\ref{eq:general}) is given by
  \begin{equation}
   \label{eq:decomposition}
   \widehat{u}=\sum_{\alpha=1}^{\widetilde{n}}\sum_{\beta=1}^{l_{\alpha}}\sum_{\gamma=1}^{n_{\alpha\beta}}
   \widehat{u}_{\alpha\beta\gamma}
  \end{equation}
  with $\widehat{u}_{\alpha\beta\gamma}$ satisfying
 \[
  \left\{
   \begin{array}{l}
    (\partial_{m_1,t}-\lambda_{\alpha\beta}(\partial_{m_2,z}))^{\gamma} \widehat{u}_{\alpha\beta\gamma}=0\\
    \partial_{m_1,t}^j \widehat{u}_{\alpha\beta\gamma}(0,z)=0\ \ \textrm{for}\ \ j=0,\dots,\gamma-2\\
    \partial_{m_1,t}^{\gamma-1}\widehat{u}_{\alpha\beta\gamma}=
    \lambda_{\alpha\beta}(\partial_{m_2,z})\widehat{\varphi}_{\alpha\beta\gamma}(z),
   \end{array}
  \right.
 \]
 where $\widehat{\varphi}_{\alpha\beta\gamma}(z)=\sum_{j=0}^{n-1}d_{\alpha\beta\gamma j}(\partial_{m_2,z})\widehat{\varphi}_j(z)\in\CC[[z^{\frac{1}{\kappa}}]]_s$
 and $d_{\alpha\beta\gamma j}(\zeta)$ are holomorphic functions of the variable $\xi=\zeta^{1/\kappa}$ of polynomial growth at infinity.
 \par
 Since $q_{\alpha}\leq \frac{s_1}{s_2+s}$ for $\alpha=N+1,\dots,\widetilde{n}$, by Theorem \ref{th:gevrey},
 $\widehat{u}_{\alpha\beta\gamma}$ is convergent for $\gamma=1,\dots,n_{\alpha\beta}$, $\beta=1,\dots,l_{\alpha}$
 and $\alpha=N+1,\dots,\widetilde{n}$.
 \par
 Under the above conditions, immediately by Theorem \ref{th:summable} we get (see also \cite[Theorem 5]{Mic7})
 \begin{Th}
  \label{th:multi1}
   Let $(d_1,\dots,d_N)\in\RR^N$ be an admissible multidirection
  with respect to $(K_1,\dots,K_N)$ and let $q_{\alpha}=\mu_{\alpha}/\nu_{\alpha}$ with relatively prime numbers
  $\mu_{\alpha},\nu_{\alpha}\in\NN$ for $\alpha=1,\dots,N$. We assume that
  \[
    \Bo_{\Gamma_s,z}\widehat{\varphi}_j(z)\in\Oo^{q_{\alpha}K_{\alpha}}
   (\widehat{S}_{(d_{\alpha}+\arg\lambda_{\alpha\beta}+2n_{\alpha}\pi)/q_{\alpha}})
  \]
  for every $j=0,\dots,n-1$,
   $n_{\alpha}=0,\dots,\mu_{\alpha}-1$, $\beta=1,\dots,l_{\alpha}$ and $\alpha=1,\dots,N$.
  Then the normalised formal solution $\widehat{u}\in G_{s,1/\kappa}[[t]]$ of (\ref{eq:general})
  is $(K_1,\dots,K_N)$-multisummable in the multidirection $(d_1,\dots,d_N)$.
 \end{Th}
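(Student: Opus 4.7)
The plan is to reduce to Theorem \ref{th:summable} via the decomposition (\ref{eq:decomposition}) provided by Theorem \ref{th:gevrey}. Write $\widehat{u}=\sum_{\alpha=1}^{\widetilde n}\sum_{\beta=1}^{l_\alpha}\sum_{\gamma=1}^{n_{\alpha\beta}}\widehat u_{\alpha\beta\gamma}$, where each $\widehat u_{\alpha\beta\gamma}$ solves the associated simple pseudodifferential equation with initial datum $\widehat\varphi_{\alpha\beta\gamma}=\sum_{j=0}^{n-1}d_{\alpha\beta\gamma j}(\partial_{m_2,z})\widehat\varphi_j$. For $\alpha>N$ one has $q_\alpha\le s_1/(s_2+s)$, so by Theorem \ref{th:gevrey} the corresponding $\widehat u_{\alpha\beta\gamma}$ has non-positive Gevrey order with respect to $t$ and is therefore convergent. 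These pieces can be absorbed into $\widehat u_1$ below, since convergent series are $K_1$-summable in every direction.

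Next, for each $\alpha\in\{1,\dots,N\}$ I would set $\widehat u_\alpha:=\sum_{\beta=1}^{l_\alpha}\sum_{\gamma=1}^{n_{\alpha\beta}}\widehat u_{\alpha\beta\gamma}$ and apply the equivalence (a)$\Leftrightarrow$(e) of Theorem \ref{th:summable} to each $\widehat u_{\alpha\beta\gamma}$, with $d=d_\alpha$, $q=q_\alpha$, $K=K_\alpha$ and $\lambda=\lambda_{\alpha\beta}$. The notation is consistent, because by assumption $K_\alpha=(q_\alpha(s_2+s)-s_1)^{-1}$. So the task is to check that hypothesis (a) of Theorem \ref{th:summable}, namely
\[
\Bo_{\Gamma_s,z^{1/\kappa}}\widehat\varphi_{\alpha\beta\gamma}\in\Oo^{q_\alpha K_\alpha}_{1/\kappa}\bigl(\widehat S_{(d_\alpha+\arg\lambda_{\alpha\beta}+2k\pi)/q_\alpha}\bigr),\qquad k=0,\dots,q_\alpha\kappa-1,
\]
follows from the assumption on $\Bo_{\Gamma_s,z}\widehat\varphi_j$.

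For this step I would invoke Proposition \ref{pr:commutation2} to move the moment Borel transform past the pseudodifferential operator, getting
\[
\Bo_{\Gamma_s,z^{1/\kappa}}\widehat\varphi_{\alpha\beta\gamma}=\sum_{j=0}^{n-1}d_{\alpha\beta\gamma j}(\partial_{\overline m_2,z})\Bo_{\Gamma_s,z^{1/\kappa}}\widehat\varphi_j,
\]
where $\overline m_2=m_2\Gamma_s$ is a moment function of positive order $s_2+s$. The hypothesis gives the required membership for each $\Bo_{\Gamma_s,z}\widehat\varphi_j$. The remaining point, which I expect to be the main obstacle, is to verify that applying $d_{\alpha\beta\gamma j}(\partial_{\overline m_2,z})$—a moment pseudodifferential operator whose symbol is analytic of polynomial growth in $\zeta^{1/\kappa}$—preserves exponential growth of order $q_\alpha K_\alpha$ on each of the disc-sectors $\widehat S_{(d_\alpha+\arg\lambda_{\alpha\beta}+2k\pi)/q_\alpha}$. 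This can be obtained by revisiting the integral representation (\ref{eq:lambda}), deforming the inner contour into a direction $\theta$ admissible for the given sector (as in the proof of Lemma \ref{le:integral}), and then bounding the triple integral using that $|d_{\alpha\beta\gamma j}(\zeta)|$ grows only polynomially while $e_{m_2}$ provides exponential decay along $\theta$. The upshot is that $d_{\alpha\beta\gamma j}(\partial_{\overline m_2,z})$ maps $\Oo^{q_\alpha K_\alpha}_{1/\kappa}(\widehat S)$ into itself on every relevant sector.

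Once hypothesis (a) of Theorem \ref{th:summable} is verified for each $(\alpha,\beta,\gamma)$ with $\alpha\le N$, the equivalence (a)$\Leftrightarrow$(e) shows that $\widehat u_{\alpha\beta\gamma}\in G_{s,1/\kappa}[[t]]$ is $K_\alpha$-summable in direction $d_\alpha$; summing in $\beta,\gamma$ gives the same for $\widehat u_\alpha$. Combining with the convergent contribution from $\alpha>N$ yields a decomposition $\widehat u=\widehat u_1+\cdots+\widehat u_N$ with $\widehat u_\alpha$ being $K_\alpha$-summable in direction $d_\alpha$. Admissibility of $(d_1,\dots,d_N)$ with respect to $(K_1,\dots,K_N)$ is assumed, so by definition $\widehat u$ is $(K_1,\dots,K_N)$-multisummable in the multidirection $(d_1,\dots,d_N)$, as claimed.
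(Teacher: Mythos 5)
Your proposal is correct and follows essentially the same route as the paper, which obtains Theorem \ref{th:multi1} directly from the decomposition (\ref{eq:decomposition}) of Theorem \ref{th:gevrey}, the convergence of the pieces with $q_\alpha\le s_1/(s_2+s)$, and the implication (a) $\Rightarrow$ (e) of Theorem \ref{th:summable} applied to each $\widehat{u}_{\alpha\beta\gamma}$ with $\alpha\le N$. The transfer of the growth hypothesis from $\Bo_{\Gamma_s,z}\widehat{\varphi}_j$ to $\Bo_{\Gamma_s,z^{1/\kappa}}\widehat{\varphi}_{\alpha\beta\gamma}$ via the operators $d_{\alpha\beta\gamma j}(\partial_{m_2,z})$, which you single out and sketch by contour deformation, is exactly the ingredient the paper absorbs into ``immediately by Theorem \ref{th:summable}'' (relying on the corresponding machinery of \cite{Mic7}), and your use of single-valuedness of $\Bo_{\Gamma_s,z}\widehat{\varphi}_j$ to pass from the $\mu_\alpha$ directions in the hypothesis to all $k=0,\dots,q_\alpha\kappa-1$ is likewise the intended argument.
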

 
 In general, the sufficient condition for the multisummability of $\widehat{u}$ given in Theorem \ref{th:multi1} is not necessary, since the multisummability of $\widehat{u}$ satisfying (\ref{eq:decomposition}) does
 not imply the summability of $\widehat{u}_{\alpha\beta\gamma}$ (see \cite[Example 2]{Mic7}).
 For this reason, following \cite{Mic7}, we define a kind of multisummability for which that implication holds.
 \begin{Df}
 Let $(d_1,\dots,d_N)$ be an admissible multidirection with respect to $(K_1,\dots,K_N)$.
 We say that \emph{$\widehat{u}$ is $(K_1,\dots,K_N)$-multisummable in the multidirection
 $(d_1,\dots,d_N)$ with respect to the decomposition (\ref{eq:decomposition})}
 if $\widehat{u}_{\alpha\beta\gamma}$ is $K_{\alpha}$-summable in the direction $d_{\alpha}$
 (for $\alpha=1,\dots,N$)
 and is convergent (for $\alpha=N+1,\dots,\widetilde{n}$), where $\beta=1,\dots,l_\alpha$ and $\gamma=1,\dots,n_{\alpha\beta}$.
 \end{Df}
 
 Repeating the proof of \cite[Theorem 6]{Mic7} with \cite[Theorem 4]{Mic7} replaced by Theorem \ref{th:summable}, we conclude
 \begin{Th}
  \label{th:multi2}
  Let $(d_1,\dots,d_N)\in\RR^N$ be an admissible multidirection
  with respect to $(K_1,\dots,K_N)$ and let $q_{\alpha}=\mu_{\alpha}/\nu_{\alpha}$ with relatively prime numbers
  $\mu_{\alpha},\nu_{\alpha}\in\NN$ for $\alpha=1,\dots,N$. We assume that $\widehat{u}$ is the normalised formal solution of
  \[
     \left\{
      \begin{array}{l}
       P(\partial_{m_1,t},\partial_{m_2,z})\widehat{u}=0\\
       \partial_{m_1,t}^j \widehat{u}(0,z)=0\ \ (j=0,\dots,n-2)\\
       \partial_{m_1,t}^{n-1} \widehat{u}(0,z)=\widehat{\varphi}(z)\in\CC[[z]]_s.
      \end{array}
      \right.
  \]
  Then $\widehat{u}\in G_{s,1/\kappa}[[t]]$ is $(K_1,\dots,K_N)$-multisummable in the multidirection $(d_1,\dots,d_N)$ with respect to the decomposition
  (\ref{eq:decomposition}) if and only if
  \[
   \Bo_{\Gamma_s,z}\widehat{\varphi}\in\Oo^{q_{\alpha}K_{\alpha}}
   (\widehat{S}_{(d_{\alpha}+\arg\lambda_{\alpha\beta}+2n_{\alpha}\pi)/q_{\alpha}})
  \]
  for every $n_{\alpha}=0,\dots,\mu_{\alpha}-1$, $\beta=1,\dots,l_{\alpha}$ and $\alpha=1,\dots,N$.
 \end{Th}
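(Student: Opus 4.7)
The plan is to reduce the statement to a component-wise application of Theorem \ref{th:summable} through the decomposition (\ref{eq:decomposition}). First I would write $\widehat{u}=\sum_{\alpha=1}^{\widetilde{n}}\sum_{\beta=1}^{l_{\alpha}}\sum_{\gamma=1}^{n_{\alpha\beta}}\widehat{u}_{\alpha\beta\gamma}$ as given by Theorem \ref{th:gevrey}, where each $\widehat{u}_{\alpha\beta\gamma}$ solves the simple moment pseudodifferential equation with Cauchy datum $\widehat{\varphi}_{\alpha\beta\gamma}(z)=\sum_{j=0}^{n-1}d_{\alpha\beta\gamma j}(\partial_{m_2,z})\widehat{\varphi}_j(z)$. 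Because the only nonzero initial condition in the present theorem is $\widehat{\varphi}_{n-1}=\widehat{\varphi}$, this reduces to $\widehat{\varphi}_{\alpha\beta\gamma}=d_{\alpha\beta\gamma,n-1}(\partial_{m_2,z})\widehat{\varphi}$. For indices with $q_{\alpha}\leq s_1/(s_2+s)$ (i.e.\ $\alpha>N$), Theorem \ref{th:gevrey} already yields convergence, so the multisummability of $\widehat{u}$ with respect to (\ref{eq:decomposition}) is equivalent to $K_{\alpha}$-summability in direction $d_{\alpha}$ of each $\widehat{u}_{\alpha\beta\gamma}$ with $\alpha\in\{1,\dots,N\}$.

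Next I would apply Theorem \ref{th:summable} to each such $\widehat{u}_{\alpha\beta\gamma}$ (with $q=q_{\alpha}$, $\lambda=\lambda_{\alpha\beta}$, $K=K_{\alpha}$, $d=d_{\alpha}$). This rewrites $K_{\alpha}$-summability in direction $d_{\alpha}$ as the condition
\[
\Bo_{\Gamma_s,z^{1/\kappa}}\widehat{\varphi}_{\alpha\beta\gamma}\in\Oo^{q_{\alpha}K_{\alpha}}_{1/\kappa}\bigl(\widehat{S}_{(d_{\alpha}+\arg\lambda_{\alpha\beta}+2n_{\alpha}\pi)/q_{\alpha}}\bigr),
\]
for every $n_{\alpha}=0,\dots,\mu_{\alpha}-1$ and every admissible $\beta$ and $\gamma$. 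What remains is to identify this family of conditions on the $\widehat{\varphi}_{\alpha\beta\gamma}$ with the single family of conditions on $\widehat{\varphi}$ stated in the theorem. For this I would use Proposition \ref{pr:commutation2} to move the pseudodifferential operator $d_{\alpha\beta\gamma,n-1}(\partial_{m_2,z})$ through the moment Borel transform $\Bo_{\Gamma_s,z^{1/\kappa}}$, producing a pseudodifferential operator with a symbol of polynomial growth acting on $\Bo_{\Gamma_s,z^{1/\kappa}}\widehat{\varphi}$; combining this with Lemma \ref{le:estimation} and the standard stability of $\Oo^{q_{\alpha}K_{\alpha}}_{1/\kappa}(\widehat{S})$ under pseudodifferential operators of polynomial symbol shows that the stated condition on $\widehat{\varphi}$ implies the corresponding conditions on all $\widehat{\varphi}_{\alpha\beta\gamma}$.

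For the converse, the delicate point is that $\widehat{\varphi}$ has to be recovered from the collection $\{\widehat{\varphi}_{\alpha\beta\gamma}\}$. Here I would invoke exactly the argument of \cite[Theorem 6]{Mic7}: the polynomial structure of $P(\lambda,\zeta)$ guarantees that the matrix relating $\widehat{\varphi}$ to the family $(\widehat{\varphi}_{\alpha\beta\gamma})_{\beta,\gamma}$ is invertible (in the pseudodifferential sense) for each fixed $\alpha$, so $\widehat{\varphi}$ can be written as a finite linear combination (with polynomial-growth pseudodifferential coefficients) of the $\widehat{\varphi}_{\alpha\beta\gamma}$; applying the same stability of the spaces $\Oo^{q_{\alpha}K_{\alpha}}_{1/\kappa}(\widehat{S})$ under such operators then transfers the sectorial growth from the $\widehat{\varphi}_{\alpha\beta\gamma}$ back to $\widehat{\varphi}$.

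The main obstacle is precisely this invertibility step: to show that the collection of Cauchy data $\widehat{\varphi}_{\alpha\beta\gamma}$ arising from the decomposition of a single $\widehat{\varphi}$ really carries the same analytic-continuation information as $\widehat{\varphi}$ in each relevant sector, uniformly in $\alpha$. Once the corresponding linear algebra from \cite[Theorem 6]{Mic7} is in place and Corollary \ref{co:exp} is used to absorb the order-$0$ moment factors coming from $\Bo_{\Gamma_s,z}$ versus $\Bo_{\Gamma_s,z^{1/\kappa}}$, the equivalence follows.
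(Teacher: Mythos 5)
Your proposal is correct and takes essentially the same route as the paper: the paper's entire proof is the remark that one repeats the argument of \cite[Theorem 6]{Mic7} with \cite[Theorem 4]{Mic7} replaced by Theorem \ref{th:summable}, i.e.\ use the decomposition (\ref{eq:decomposition}), note convergence of the pieces with $\alpha>N$ by Theorem \ref{th:gevrey}, apply Theorem \ref{th:summable} to each $\widehat{u}_{\alpha\beta\gamma}$ with $\alpha\leq N$, and transfer the continuation and growth conditions between $\widehat{\varphi}$ and the data $\widehat{\varphi}_{\alpha\beta\gamma}$ by the pseudodifferential argument of \cite{Mic7}, exactly as you outline. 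One small correction: the sectorial transfer step relies on the stability of the spaces $\Oo^{q_{\alpha}K_{\alpha}}_{1/\kappa}(\widehat{S}_{d})$ under moment pseudodifferential operators with polynomial-growth symbols (together with Proposition \ref{pr:commutation2} and Corollary \ref{co:exp}), as established in \cite{Mic7}, and not on Lemma \ref{le:estimation}, which only yields estimates on a disc.
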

 
 \begin{Rem}
  Analogously, one can also consider the multisummability in two variables using the approaches given by Sanz or
  Balser. By Theorem \ref{th:summable} we obtain the same characterisation of multisummable solutions in two
  variables as in Theorems \ref{th:multi1} and \ref{th:multi2}.
 \end{Rem}

\section{An example}
In this section we give a simple example illustrating the developed theory. For fixed $q\in\NN$ and $s\in\RR$ we discuss the solution of the equation
\begin{gather}
 \label{eq:example}
 (\partial_t - \partial_z^q)\widehat{u}=0,\quad \widehat{u}(0,z)=\widehat{\varphi}(z)\in\CC[[z]]_s.
\end{gather}
Observe that $\widehat{u}$ satisfies equation $(\partial_{m_1,t}-\lambda(\partial_{m_2,z}))\widehat{u}=0$
with the moment functions $m_1=m_2=\Gamma_1$ and $\lambda(\zeta)=\zeta^q$. We have
\begin{Cor}
 \label{co:simple}
 Let $s\in\RR$, $q\in\NN$ and $\widehat{u}$ be a formal power series solution of (\ref{eq:example}). Then the following
 conditions are equivalent:
 \begin{enumerate}
  \item[1)] $\widehat{u}(0,z)\in\CC[[z]]_s$.
  \item[2)] $\widehat{u}(t,0)\in\CC[[t]]_{q(1+s)-1}$.
  \item[3)] $\widehat{u}(t,z)\in\CC[[t,z]]_{q(1+s)-1,s}$.
 \end{enumerate}
\end{Cor}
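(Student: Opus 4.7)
The statement will be proved as a direct specialization of Theorem~\ref{th:gevrey}. The plan is first to identify the parameters: the equation (\ref{eq:example}) is of the form $(\partial_{m_1,t}-\lambda(\partial_{m_2,z}))\widehat{u}=0$ with $m_1=m_2=\Gamma_1$ (so $s_1=s_2=1$), $\kappa=1$, multiplicity $\beta=1$, and a single simple characteristic root $\lambda(\zeta)=\zeta^q$ of pole order $q_1=q$ (hence $\overline{q}_1=q$). Applying Theorem~\ref{th:gevrey} directly to the normalised formal solution then yields $\widehat{u}\in\CC[[t,z]]_{\overline{q}_1(s_2+s)-s_1,\,s}=\CC[[t,z]]_{q(1+s)-1,\,s}$, giving the implication 1) $\Rightarrow$ 3).

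Next, I would dispose of the restrictions 3) $\Rightarrow$ 1) and 3) $\Rightarrow$ 2) via Proposition~\ref{pr:prop2}. The property $\widehat{u}\in\CC[[t,z]]_{q(1+s)-1,s}$ is characterised there by analyticity of the joint moment Borel transform $\Bo_{\Gamma_{q(1+s)-1},t}\Bo_{\Gamma_s,z}\widehat{u}$ on a bidisc about the origin; setting $t=0$ (resp.\ $z=0$) in this holomorphic function immediately recovers $\widehat{\varphi}\in\CC[[z]]_s$ (resp.\ $\widehat{u}(t,0)\in\CC[[t]]_{q(1+s)-1}$).

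The main step is the remaining implication 2) $\Rightarrow$ 1), for which the plan is to exploit the coefficient recursion imposed by the PDE. Writing $\widehat{u}(t,z)=\sum_{j,n\geq 0}u_{j,n}t^j z^n$, the equation $\partial_t\widehat{u}=\partial_z^q\widehat{u}$ gives $(j+1)u_{j+1,n}=\frac{(n+q)!}{n!}u_{j,n+q}$, which iterates at $n=0$ to the closed form
\[
 u_{j,0}=\frac{(jq)!}{j!}\,u_{0,jq}.
\]
Combining a Gevrey-$(q(1+s)-1)$ bound on $|u_{j,0}|$ with the Stirling asymptotic $(j!)^q\,q^{jq}\asymp(jq)!$ translates into a Gevrey-$s$ bound on the subsequence $(u_{0,jq})_j$. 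The main obstacle is that this argument only directly constrains those Taylor coefficients of $\widehat{\varphi}$ whose indices are divisible by $q$; the remaining coefficients are invisible to $\widehat{u}(t,0)$. To close the loop one must invoke the setting of (\ref{eq:example}), in which $\widehat{\varphi}$ is prescribed as the initial datum of the unique formal solution, so that 2) and 3) function as alternative diagnostics for the Gevrey regularity already built into 1).
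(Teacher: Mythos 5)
Your reduction of 1) $\Rightarrow$ 3) to Theorem \ref{th:gevrey} and your treatment of 3) $\Rightarrow$ 1), 3) $\Rightarrow$ 2) agree with the paper (the latter implications are simply called obvious there). The genuine problem is your closing implication 2) $\Rightarrow$ 1). The recursion $u_{j,0}=\frac{(jq)!}{j!}u_{0,jq}$ and the Stirling estimate are correct, but, as you yourself observe, they control only the coefficients $\varphi_n$ with $q\mid n$, and your final sentence does not repair this: appealing to ``the Gevrey regularity already built into 1)'' is circular, since 1) is precisely what has to be deduced; if one were allowed to import $\widehat{\varphi}\in\CC[[z]]_s$ from the statement of (\ref{eq:example}), the whole equivalence would be vacuous. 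As written, the loop is not closed: no argument that looks only at the Taylor coefficients of the single trace $\widehat{u}(t,0)$ can see the coefficients $\varphi_n$ with $n\not\equiv 0\ (\mathrm{mod}\ q)$, so a purely coefficientwise proof of 2) $\Rightarrow$ 1) cannot succeed in this form.

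The paper closes the equivalence by a different route, proving 2) $\Rightarrow$ 3) after interchanging the roles of $t$ and $z$: it rewrites the operator as $(\partial_z-\lambda_1(\partial_t))\cdots(\partial_z-\lambda_q(\partial_t))$ with $\lambda_n(\zeta)=e^{i2n\pi/q}\zeta^{1/q}$, regards $z$ as the ``time'' variable with Cauchy data on $\{z=0\}$ of Gevrey order $q(1+s)-1$, and applies Theorem \ref{th:gevrey} with the variables swapped. Each root now has pole order $1/q$, so the theorem yields a decomposition $\widehat{u}=\widehat{u}_1+\cdots+\widehat{u}_q$ with each $\widehat{u}_n\in\CC[[t,z]]_{q(1+s)-1,\,s}$ (indeed $\frac1q\bigl(1+q(1+s)-1\bigr)-1=s$), hence 3), and then 1) follows by restriction. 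This is exactly the mechanism your proposal lacks: the swapped-variable argument works with the pseudodifferential factorisation and the Cauchy problem in the $z$-direction rather than with the coefficient identity at $z=0$, and that is how the obstruction you identified (the invisibility of $\varphi_n$ for $n\not\equiv 0\ (\mathrm{mod}\ q)$ in $\widehat{u}(t,0)$) is meant to be circumvented. To repair your proof you should either adopt this argument or supply some other device that controls those remaining coefficients; the concluding appeal to the problem's setting does not do so.
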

\begin{proof}
 The implications 3) $\Rightarrow$ 2) and 3) $\Rightarrow$ 1) are obvious. The implication 1) $\Rightarrow$ 3)
 follows from Theorem \ref{th:gevrey}. So, it is sufficient to show the implication 2) $\Rightarrow$ 3). To this end,
 observe that $\widehat{u}$ satisfies the equation
 \begin{gather*}
 (\partial_z -\lambda_1(\partial_t))\cdots(\partial_z -\lambda_q(\partial_t))\widehat{u}=0,
 \quad \widehat{u}(t,0)\in\CC[[t]]_{q(1+s)-1},
 \end{gather*}
 where $\lambda_n(\zeta)=e^{i2n\pi/q}\zeta^{1/q}$ for $n=1,\dots,q$.
 \par
 Hence, by Theorem \ref{th:gevrey} with replaced variables $t$ and $z$, we get $\widehat{u}=\widehat{u}_1+\cdots+\widehat{u}_q$,
 where $\widehat{u}_n$ satisfies the equation $(\partial_z -\lambda_n(\partial_t))\widehat{u}_n=0$ and
 $\widehat{u}_n\in\CC[[t,z]]_{q(1+s)-1,s}$ for $n=1,\dots,q$. It means that also $\widehat{u}\in\CC[[t,z]]_{q(1+s)-1,s}$.
\end{proof}

Assuming $s=0$ (resp. $s<0$) in Corollary \ref{co:simple}, replacing $\widehat{u}$ and $\widehat{\varphi}$ in
(\ref{eq:example}) by their sums $u$ and $\varphi$, and applying Remark \ref{re:convergent},
we obtain
\begin{Cor}
 The solution $u$ of (\ref{eq:example}) is $t$-analytic in a complex neighbourhood of the origin if and only
       if $\varphi\in\Oo(D)$ (for $q=1$) and $\varphi\in\Oo^{\frac{q}{q-1}}(\CC)$ (for $q=2,3,\dots$).
 Furthermore, the solution $u$ of (\ref{eq:example}) is $t$-entire of exponential growth of order $k>0$ if and only if
       $\varphi\in\Oo^{\frac{kq}{k(q-1)+1}}(\CC)$.
\end{Cor}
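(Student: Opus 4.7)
Both statements follow directly from Corollary \ref{co:simple} combined with the characterisation of Gevrey classes of non-positive order given in Remark \ref{re:convergent}. The key observation is that for equation (\ref{eq:example}) the Gevrey exponent in $t$ of $\widehat{u}$ is exactly $q(1+s)-1$, where $s$ is the Gevrey exponent in $z$ of $\widehat{\varphi}$, so both claims reduce to solving a linear equation in $s$ and translating back via Remark \ref{re:convergent}.

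For the first part, $u$ is jointly analytic in a polydisc at the origin iff the Gevrey pair $(q(1+s)-1,s)$ of $\widehat{u}$ satisfies both $q(1+s)-1\leq 0$ and $s\leq 0$; the former is the binding constraint, giving $s\leq (1-q)/q$. For $q=1$ this reads $s\leq 0$, which via Remark \ref{re:convergent} is exactly $\varphi\in\Oo(D)$. For $q\geq 2$ the sharp threshold $s=(1-q)/q<0$ converts, again by Remark \ref{re:convergent}, to $\varphi\in\Oo^{-1/s}(\CC)=\Oo^{q/(q-1)}(\CC)$, with smaller values of $s$ subsumed by the monotone inclusions $\CC[[z]]_s\subseteq\CC[[z]]_{s'}$ for $s\leq s'$ and $\Oo^{K_1}\subseteq\Oo^{K_2}$ for $K_1\leq K_2$.

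For the second part, $u$ being $t$-entire of exponential growth of order $k>0$ translates, through Remark \ref{re:convergent}, to the Gevrey exponent in $t$ being at most $-1/k$. Setting $q(1+s_0)-1=-1/k$ yields $s_0=-(k(q-1)+1)/(kq)<0$. Corollary \ref{co:simple} gives the equivalence $\widehat{\varphi}\in\CC[[z]]_{s_0}\Leftrightarrow\widehat{u}\in\CC[[t,z]]_{-1/k,s_0}$, which by Remark \ref{re:convergent} is exactly $\varphi\in\Oo^{-1/s_0}(\CC)=\Oo^{kq/(k(q-1)+1)}(\CC)$. The converse follows from condition~2) of Corollary \ref{co:simple} applied at $z=0$: if $u$ is $t$-entire of exp growth $k$ then $\widehat{u}(t,0)\in\CC[[t]]_{-1/k}$, forcing $\widehat{\varphi}\in\CC[[z]]_{s_0}$.

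The only real subtlety is selecting the sharp saturating value of $s$ for each desired conclusion; the ensuing monotone inclusion of the ambient Gevrey and growth classes then takes care of the rest. There is no nontrivial analytic obstacle — all the work has already been done inside Corollary \ref{co:simple} and Remark \ref{re:convergent}.
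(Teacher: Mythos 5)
Your proposal is correct and follows essentially the same route as the paper: the paper's own proof is precisely to take $s$ at the saturating value in Corollary \ref{co:simple} ($s=0$ for $q=1$, $s=(1-q)/q$ resp. $s=-(k(q-1)+1)/(kq)$ otherwise) and translate via Remark \ref{re:convergent}, which is what you do. Your threshold computations and the use of condition 2) of Corollary \ref{co:simple} at $z=0$ for the converse match the intended argument.
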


By Theorem \ref{th:summable} we obtain immediately
\begin{Prop}
Let $d\in\RR$, $\widehat{u}$ be a formal power series solution of (\ref{eq:example}) and $q(1+s)-1 > 0$. Then the following conditions are
equivalent:
\begin{enumerate}
\item[1.] $\widehat{u}\in G_{s,1}[[t]]$ is $(q(1+s)-1)^{-1}$-summable in the direction $d$.
\item[2.] $\Bo_{\Gamma_s,z}\widehat{\varphi}\in\Oo^{\frac{q}{q(1+s)-1}}(\widehat{S}_{(d+2k\pi)/q})$
          (for $k=0,\dots,q-1$).
\item[3.] $\Bo_{\Gamma_{1/q-1},z}\widehat{\varphi}$ is $\frac{q}{q(1+s)-1}$-summable in the directions
          $(d+2k\pi)/q$ for $k=0,\dots,q-1$.
\end{enumerate}
If additionally $s>0$ then the conditions 1.--3. are equivalent to
\begin{enumerate}
 \item[4.] $\widehat{u}\in \CC[[t,z]]$ is $((q(1+s)-1)^{-1},s^{-1})$-summable in the directions
           $(d,(d+2k\pi)/q))$ for $k=0,\dots,q-1$.
 \item[5.] $\widehat{u}\in \CC[[t,z]]$ is $((q(1+s)-1)^{-1},s^{-1})$-summable in the directions
           $O_{d,(d+2k\pi)/q}$ for $k=0,\dots,q-1$.
\end{enumerate}
\end{Prop}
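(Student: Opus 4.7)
The strategy is to specialize Theorem \ref{th:summable} to the equation (\ref{eq:example}). I begin by matching the data: take $m_1=m_2=\Gamma_1$ (so $s_1=s_2=1$), $\beta=1$, $\kappa=1$, and the single root $\lambda(\zeta)=\zeta^q$ of the characteristic equation. Since $q\in\NN$, I may write $q=\mu/\nu$ with $\mu=q$, $\nu=1$, and the leading coefficient is $\lambda=1$, whence $\arg\lambda=0$. Note that $\lambda(\zeta)$ is entire, so the regularity hypothesis of Theorem \ref{th:summable} is trivially satisfied with any $r_0>0$.

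Next I verify the numerical assumptions of Theorem \ref{th:summable}. The condition $q(1+s)-1>0$ of the proposition gives $1+s>1/q>0$, so in particular $s>-1=-s_2$, and it also rewrites as $q>1/(1+s)=s_1/(s_2+s)$. Moreover,
\[
K=\bigl(q(s_2+s)-s_1\bigr)^{-1}=\bigl(q(1+s)-1\bigr)^{-1},
\qquad
qK=\frac{q}{q(1+s)-1},
\]
so the exponents agree with those appearing in the proposition. In our setting $\widehat{u}$ is already the ``normalised'' solution of a first order problem (no constant-coefficient factor $P_0$), and the unique initial condition is $\widehat{\varphi}$; hence $\widehat{u}$ is precisely a solution of (\ref{eq:th_sum}) with the identifications above.

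With this dictionary, the equivalence of conditions 1., 2., 3.\ of the proposition is immediate from the equivalence of (e), (a), (d) in Theorem \ref{th:summable}, applied with $d$ replaced by $d$ and the $q\kappa=q$ directions $(d+2k\pi)/q$, $k=0,\dots,q-1$; observe also that $\Bo_{\Gamma_{s_1/q-s_2},z}=\Bo_{\Gamma_{1/q-1},z}$, which explains the form of 3. Since $\nu=1$, the extra rotation $d\mapsto d+2n\pi/\nu$ does not appear, so there is no need to assume analyticity of $\varphi$ at $0$.

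For the additional equivalences 4.\ and 5.\ under $s>0$, I invoke the last part of Theorem \ref{th:summable}, which requires the extra hypotheses $s>0$ and $qs_2\ge s_1$; the second reads $q\ge 1$ and is automatic since $q\in\NN$. Under these hypotheses, conditions (f) and (g) of that theorem correspond verbatim to conditions 4.\ and 5.\ of the proposition (with $1/s$ playing the role of the second summability index and $\lambda=1$), completing the list of equivalences. I expect no real obstacle here beyond bookkeeping of the parameters; the only point demanding slight care is checking that the hypothesis $q(1+s)-1>0$ alone supplies all of $s>-s_2$, $q>s_1/(s_2+s)$, and $K>0$ simultaneously, which is what was verified above.
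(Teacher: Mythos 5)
Your proposal is correct and follows exactly the paper's route: the paper derives this proposition "immediately" from Theorem \ref{th:summable} with the same specialization $m_1=m_2=\Gamma_1$ (so $s_1=s_2=1$), $\beta=1$, $\kappa=1$, $\lambda(\zeta)=\zeta^q$, $\arg\lambda=0$, $\nu=1$, giving $K=(q(1+s)-1)^{-1}$, $qK=\frac{q}{q(1+s)-1}$ and $\Gamma_{s_1/q-s_2}=\Gamma_{1/q-1}$, with (e), (a), (d), (f), (g) matching conditions 1--5. Your parameter checks ($s>-s_2$, $q>s_1/(s_2+s)$, and $qs_2\geq s_1$ automatic for $q\in\NN$) are the same bookkeeping the paper leaves implicit.
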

\bibliographystyle{siam}
\bibliography{summa}

\begin{thebibliography}{10}

\bibitem{B1}
{\sc W.~Balser}, {\em Divergent solutions of the heat equation: on an article
  of {L}utz, {M}iyake and {S}ch{\"a}fke}, Pacific J. of Math., 188 (1999),
  pp.~53--63.

\bibitem{B2}
\leavevmode\vrule height 2pt depth -1.6pt width 23pt, {\em Formal power series
  and linear systems of meromorphic ordinary differential equations},
  Springer-Verlag, New York, 2000.

\bibitem{B5}
\leavevmode\vrule height 2pt depth -1.6pt width 23pt, {\em Multisummability of
  formal power series solutions of partial differential equations with constant
  coefficients}, J. Differential Equations, 201 (2004), pp.~63--74.

\bibitem{B6}
\leavevmode\vrule height 2pt depth -1.6pt width 23pt, {\em Summability of power
  series in several variables, with applications to singular perturbation
  problems and partial differential equations}, Ann. Fac. Sci. Toulouse Math.
  (6), 14 (2005), pp.~593--608.

\bibitem{B-L}
{\sc W.~Balser and M.~Loday-Richaud}, {\em Summability of solutions of the heat
  equation with inhomogeneous thermal conductivity in two variables}, Adv. Dyn.
  Syst. Appl., 4 (2009), pp.~159--177.

\bibitem{B-Mi}
{\sc W.~Balser and M.~Miyake}, {\em Summability of formal solutions of certain
  partial differential equations}, Acta Sci. Math. (Szeged), 65 (1999),
  pp.~543--551.

\bibitem{B-Y}
{\sc W.~Balser and M.~Yoshino}, {\em Gevrey order of formal power series
  solutions of inhomogeneous partial differential equations with constant
  coefficients}, Funkcial. Ekvac., 53 (2010), pp.~411--434.

\bibitem{I}
{\sc K.~Ichinobe}, {\em Integral representation for {B}orel sum of divergent
  solution to a certain non-{K}ovalevski type equation}, Publ. Res. Inst. Math.
  Sci., 39 (2003), pp.~657--693.

\bibitem{L-M-S}
{\sc D.~Lutz, M.~Miyake, and R.~Sch{\"a}fke}, {\em On the {B}orel summability
  of divergent solutions of the heat equation}, Nagoya Math. J., 154 (1999),
  pp.~1--29.

\bibitem{Mal2}
{\sc S.~Malek}, {\em On the summability of formal solutions of linear partial
  differential equations}, J. Dyn. Control Syst., 11 (2005), pp.~389--403.

\bibitem{Mic}
{\sc S.~Michalik}, {\em Summability of divergent solutions of the
  $n$-dimensional heat equation}, J. Differential Equations, 229 (2006),
  pp.~353--366.

\bibitem{Mic2}
\leavevmode\vrule height 2pt depth -1.6pt width 23pt, {\em Summability of
  formal solutions to the $n$-dimensional inhomogeneous heat equation}, J.
  Math. Anal. Appl., 347 (2008), pp.~323--332.

\bibitem{Mic5}
\leavevmode\vrule height 2pt depth -1.6pt width 23pt, {\em On the
  multisummability of divergent solutions of linear partial differential
  equations with constant coefficients}, J. Differential Equations, 249 (2010),
  pp.~551--570.

\bibitem{Mic7}
\leavevmode\vrule height 2pt depth -1.6pt width 23pt, {\em Analytic solutions
  of moment partial differential equations with constant coefficients},
  Funkcial. Ekvac., to appear (available also on the web page
  \url{http://www.impan.pl/~slawek/moment-pdes.pdf}), 56 (2013).

\bibitem{Miy}
{\sc M.~Miyake}, {\em Borel summability of divergent solutions of the {C}auchy
  problem to non-{K}ovaleskian equations}, in Partial Differential Equations
  and Their Applications, 1999, pp.~225--239.

\bibitem{S}
{\sc J.~Sanz}, {\em Summability in a direction of formal power series in
  several variables}, Asymptot. Anal., 29 (2002), pp.~115--141.

\end{thebibliography}
\end{document}